\numberwithin{equation}{section}
\newcommand{\cstar}{$\textrm{C}^*$}
\newtheorem{theo}{Theorem}[section]
\newtheorem{theoalph}{Theorem}
\newtheorem{cor}[theo]{Corollary}
\newtheorem{lem}[theo]{Lemma}
\newtheorem{prop}[theo]{Proposition}
\newtheorem*{claim}{Claim}
\theoremstyle{definition}
\newtheorem{defin}[theo]{Definition}
\newtheorem{quest}[theo]{Question}
\theoremstyle{remark}
\newtheorem{rem}[theo]{Remark}
\newcommand{\bbT}{\mathbb{T}}
\newcommand{\bC}{\mathbb{C}}
\newcommand{\bE}{\mathbb{E}}
\newcommand{\bN}{\mathbb{N}}
\newcommand{\bP}{\mathbb{P}}
\newcommand{\bQ}{\mathbb{Q}}
\newcommand{\bR}{\mathbb{R}}
\newcommand{\bZ}{\mathbb{Z}}
\newcommand{\cA}{\mathcal{A}}
\newcommand{\cD}{\mathcal{D}}
\newcommand{\kG}{\mathfrak{G}}
\newcommand{\kH}{\mathfrak{H}}
\newcommand{\kc}{2^{\aleph_0}}
\newcommand{\sB}{\mathscr{B}}
\newcommand{\sK}{\mathscr{K}}
\newcommand{\sM}{\mathscr{M}}
\newcommand{\aut}{\text{Aut}}
\newcommand{\C}{\textrm{C}^*}
\newcommand{\ax}[1]{\operatorname{\mathsf{#1}}}
\DeclareMathOperator{\ad}{Ad}
\newcommand{\cB}{\mathscr{B}}
\newcommand{\cK}{\mathscr{K}}
\newcommand{\bbC}{\mathbb{C}}
\newcommand{\bbZ}{\mathbb{Z}}
\newcommand{\bbR}{\mathbb{R}}
\newcommand{\bbN}{\mathbb{N}}
\newcommand{\CAR}{M_{2^\infty}}
\newcommand{\bbP}{\mathbb P}
\newcommand{\bbQ}{\mathbb Q}
\DeclareMathOperator{\cov}{cov} 
\newcommand{\cM}{\mathscr M}
\DeclareMathOperator{\Ad}{Ad}
\newcommand{\st}{\mathsf{S}}
\newcommand{\pure}{\mathsf{P}}
\newcommand{\U}{\mathsf{U}}
\newcommand{\diacoh}{\diamondsuit^\mathsf{Cohen}}
\newcommand{\rs}{\upharpoonright}
\DeclareMathOperator{\id}{id}
\newcommand{\e}{\varepsilon}
\newcommand{\bEA}[2]{\bE_{A^\circ}(#1, #2)}
\newcommand{\bEB}[2]{\bE_{B^\circ}(#1, #2)}
\newcommand{\sfF}{\mathsf F}
\DeclareMathOperator{\Aut}{Aut}
\newcommand{\sfC}{\mathsf C}
\newcommand{\ZFCmP}{$\ax{ZFC-P}$}
\newcommand{\ZFC}{$\ax{ZFC}$}
\begin{document}

\title{Can you take Akemann--Weaver's $\diamondsuit_{\aleph_1}$ away?}

\author[Daniel Calder\'on]{Daniel Calder\'on}
\address{Department of Mathematics and Statistics, York University, 4700
Keele Street, Toronto, Ontario, Canada, M3J 1P3}
\email{d.calderon@mail.utoronto.ca}

\author[Ilijas Farah]{Ilijas Farah}
\thanks{Partially supported by NSERC}
\address{Department of Mathematics and Statistics, York University, 4700
Keele Street, Toronto, Ontario, Canada, M3J 1P3} 
\address{Matemati\v{c}ki Institut SANU, Kneza Mihaila 36, 11000 Beograd, p.p.\,367, Serbia}
\email{ifarah@yorku.ca}

\date{\today}

\subjclass[2010]{Primary: 03E35, 46L30.}

\keywords{Representations of \cstar-algebras, forcing, Jensen's Diamond, Naimark's problem} 

\thanks{Corresponding author: Ilijas Farah. ORCID: 0000-0001-7703-6931}

\maketitle

{\centering\footnotesize\emph{A Carlos Di Prisco en su cumplea\~nos n\'umero 70.}\par}

\begin{abstract}
By Glimm's dichotomy, a separable, simple \cstar-algebra has continuum many unitarily inequivalent irreducible representations if, and only if, it is non-type I while all of its irreducible representations are unitarily equivalent if, and only if, it is type I. Naimark asked whether the latter equivalence holds for all \cstar-algebras. 

In 2004, Akemann and Weaver gave a negative answer to Naimark's problem using Jensen's Diamond Principle $\diamondsuit_{\aleph_1}$, a powerful diagonalization principle that implies the Continuum Hypothesis ($\ax{CH}$). By a result of Rosenberg, a separably represented, simple \cstar-algebra with a unique irreducible representation is necessarily of type I. We show that this result is sharp by constructing an example of a separably represented, simple \cstar-algebra that has exactly two inequivalent irreducible representations, and therefore does not satisfy the conclusion of Glimm's dichotomy. Our construction uses a weakening of Jensen's $\diamondsuit_{\aleph_1}$, denoted $\diacoh$, that holds in the original Cohen's model for the negation of $\ax{CH}$. We also prove that $\diacoh$ suffices to give a negative answer to Naimark's problem. Our main technical tool is a forcing notion that generically adds an automorphism of a given \cstar-algebra with a prescribed action on its space of pure states. 
\end{abstract}

\maketitle

\section{Introduction}

A major early result in the theory of operator algebras (and, at the time, possibly the deepest result in the theory; see \cite[\S IV.1.5]{Black:Operator}) was Glimm's 1960 dichotomy theorem. It states (among other things) that a separable and simple \cstar-algebra $A$ either has a unique irreducible representation up to unitary equivalence, or it has $2^{\aleph_0}$ inequivalent irreducible representations. The former condition is equivalent to $A$ being isomorphic to the algebra of compact operators on a separable Hilbert space, while the latter is equivalent to $A$ not being of type I (see \cite[Theorem~IV.1.5.1]{Black:Operator} for the full statement).  

Parts of Glimm's theorem were extended to non-separable \cstar-algebras by Sakai (see \cite[IV.1.5.8]{Black:Operator} for a discussion). In the 1970s further progress on extending Glimm's theorem to all simple \cstar-algebras slowed down to a halt. The most obvious question, asked by Naimark already in the 1950s, was whether a \cstar-algebra with a unique irreducible representation up to unitary equivalence is necessarily isomorphic to an algebra of compact operators. A \emph{counterexample to Naimark's problem} is a \cstar-algebra that is not isomorphic to an algebra of compact operators, yet still has only one irreducible representation up to unitary equivalence.

In a seminal paper \cite{AkeWe:Consistency}, Akemann and Weaver constructed a counterexample to Naimark's problem using Jensen's $\diamondsuit_{\aleph_1}$ principle\footnote{It is not known whether $\diamondsuit_\kappa$ has any bearing on Naimark's problem for any $\kappa\geq\aleph_2$.}. By related proofs, also conditioned on Jensen's $\diamondsuit_{\aleph_1}$ on $\aleph_1$, several counterexamples with additional properties (e.g., a prescribed tracial simplex \cite{vaccaro2017trace}, not isomorphic to its opposite algebra \cite{farah2016simple}) were obtained. N.C. Phillips observed that the Akemann--Weaver construction provides a nuclear \cstar-algebra. The range of applications of this construction was extended to other problems: In \cite{farah2016simple} it was shown that Glimm's dichotomy can fail: Assuming $\diamondsuit_{\aleph_1}$, there exists a simple \cstar-algebra with exactly $m$ inequivalent irreducible representations for all $m\leq\aleph_0$ (the latter was announced in \cite[\S 8.2]{Fa:Logic}) and a hyperfinite II$_1$ factor not isomorphic to its opposite was constructed in \cite{farah2021rigid}. 	 

The following is a special case of our Theorem \ref{realmain}. 

\begin{theoalph}\label{main}
It is relatively consistent with $\ax{ZFC}$ that there exists a counterexample to Naimark's problem while $\diamondsuit_{\aleph_1}$ fails.
\end{theoalph}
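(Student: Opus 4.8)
The plan is to isolate a weakening of $\diamondsuit$, which I will call $\diacoh$, that is simultaneously (i) strong enough to drive an Akemann--Weaver-style transfinite construction and (ii) consistent with the failure of $\diamondsuit$. Since $\diamondsuit$ implies $\ax{CH}$ (Jensen), the cleanest route to $\neg\diamondsuit$ is to produce a model of $\diacoh+\neg\ax{CH}$, and the obvious candidate is a Cohen extension. So I would start in a ground model $V$ of $\ax{ZFC}+\ax{GCH}$, add at least $\aleph_2$ Cohen reals to obtain $W$, note that $2^{\aleph_0}\geq\aleph_2$ and hence $\diamondsuit$ fails in $W$, and then carry out the construction of a Naimark counterexample inside $W$ using only $\diacoh$.

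The combinatorial core is the principle $\diacoh$. In the spirit of the Devlin--Shelah weak diamond and the parametrized $\diamondsuit$-principles of Moore--Hru\v{s}\'ak--D\v{z}amonja, I would formulate $\diacoh$ as a guessing principle adapted to Cohen forcing: along a club or stationary set of $\alpha<\omega_1$ it predicts, from a countable approximation available at stage $\alpha$, the small amount of data needed to continue the construction --- concretely, enough information to recover a pure state of the final algebra together with a real coding a connecting automorphism. The first technical task is to prove that $\diacoh$ holds after adding Cohen reals; this is where the name is justified, since the predicted objects are read off an $\omega_1$-indexed family of Cohen generics, and the usual fusion and genericity argument for parametrized diamonds applies. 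Crucially, because $\diacoh$ only guesses objects of bounded (essentially real) complexity rather than arbitrary subsets of $\omega_1$, it does not entail $\ax{CH}$ and so survives in the Cohen model.

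With $\diacoh$ in hand I would reconstruct the counterexample. Build a continuous increasing chain $\langle A_\alpha:\alpha<\omega_1\rangle$ of separable, simple, unital, nuclear C*-algebras with $A=\overline{\bigcup_\alpha A_\alpha}$, together with a coherent increasing family of automorphisms, so that the group they generate acts transitively on the pure state space $\pure(A)$. Transitivity of the $\Aut(A)$-action on $\pure(A)$ forces all GNS representations to be unitarily equivalent, so $A$ has a unique irreducible representation; arranging $A$ to be simple, separably represented, and of density character $\aleph_1$ (hence non-separable and $\ncong$ the compacts) makes it a genuine Naimark counterexample. At each successor step the task is to connect a prescribed pair of pure states $(\varphi,\psi)$ by an automorphism; here I invoke the forcing notion that generically adds an automorphism of the separable algebra $A_\alpha$ with the prescribed action $\varphi\mapsto\psi$ on pure states. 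Rather than passing to a genuine forcing extension, I would use the $\diacoh$-guess at stage $\alpha$ to supply a sufficiently generic filter for this poset, thereby obtaining the required automorphism within $W$; the guessing principle simultaneously guarantees that, along the construction, every pair of pure states of the completed algebra $A$ is eventually listed and handled.

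The main obstacle is verifying that the \emph{weak} guessing of $\diacoh$ really suffices where $\diamondsuit$ was used. The strength of $\diamondsuit$ in Akemann--Weaver lies in predicting arbitrary pure states of the uncountable limit $A$ on a stationary set with complete information, whereas $\diacoh$ predicts only real-coded, bounded-complexity data. I therefore expect the crux to be a locality and absoluteness analysis showing that (a) every pure state of $A$ is determined, relative to a suitable elementary submodel capturing $A_\alpha$, by countably much information plus a single Cohen-generic real, and (b) the connecting automorphism produced by the forcing depends on $(\varphi,\psi)$ in a sufficiently definable (Borel or continuous) way that a weak guess of that data can be converted into an honest automorphism. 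Establishing this reduction, together with proving that the automorphism-adding poset has the structural properties needed for both the genericity argument and the preservation of simplicity and separable representability, is the technical heart of the argument.
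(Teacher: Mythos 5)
Your route to $\neg\diamondsuit$ is the fatal gap. You propose to kill $\diamondsuit$ by blowing up the continuum (adding $\aleph_2$ Cohen reals) and then running the transfinite construction under $\diacoh+\neg\ax{CH}$. But the construction you describe produces a \cstar-algebra of density character $\aleph_1$, and when $\aleph_1<2^{\aleph_0}$ such an algebra can never be a counterexample to Naimark's problem: Glimm's dichotomy holds for all \cstar-algebras of density character strictly below $2^{\aleph_0}$ (see \cite[Corollary~5.5.8]{Fa:STCstar}, quoted in the paper), so a density-$\aleph_1$ algebra with a unique pure state up to unitary equivalence is isomorphic to an algebra of compact operators. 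Relatedly, the bookkeeping your recursion needs---a surjection $f\colon\aleph_1\to H_{\aleph_1}$ listing all separable approximations and their pure states cofinally often---exists only if $|\bbR|=\aleph_1$, i.e., under $\ax{CH}$; and whether a counterexample can coexist with $\neg\ax{CH}$ at all is explicitly left open in the paper. The paper's actual proof keeps $\ax{CH}$ and kills $\diamondsuit$ the hard way: it starts from Jensen's model of $\ax{ZFC}+\ax{CH}+\neg\diamondsuit$ (a deep theorem; no Cohen-style extension of a $\ax{GCH}$ model gives this), and then forces with a finite-support iteration of length $\aleph_1$ of nontrivial ccc forcings (Lemma \ref{lemon}). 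This iteration preserves $\ax{CH}$ and $\neg\diamondsuit$, adds a Cohen real over each intermediate model at limit stages, and the chain of intermediate extensions captures every $X\subseteq\aleph_1$ stationarily often, so $\diacoh+\ax{CH}+\neg\diamondsuit$ holds in the final model; Theorem \ref{realmain} then builds the counterexample from $\diacoh+\ax{CH}$. Without Jensen's model as a starting point your plan has no way to make $\diamondsuit$ fail while keeping $\ax{CH}$, and without $\ax{CH}$ the algebra you build cannot work.

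Two further points would sink the construction even under $\ax{CH}$. First, transitivity of the $\Aut(A)$-action on $\pure(A)$ does \emph{not} force all irreducible representations to be unitarily equivalent: conjugate pure states have GNS representations related by an automorphism, which is weaker than unitary (or spatial) equivalence, and Naimark's problem is about the latter. The actual mechanism is Akemann--Weaver's: at each successor stage one passes to the crossed product $A_\alpha\rtimes_{\Phi_\kG}\bZ$, where the connecting automorphism becomes \emph{inner}, and the genuinely hard part---which is exactly what genericity of the filter for $\bEA{\bar\varphi}{\bar\psi}$ secures (Theorem \ref{agap}, Corollary \ref{uep})---is that all ground-model pure states have \emph{unique} pure-state extensions to the crossed product, with simplicity coming from outerness via Kishimoto's theorem. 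Your proposal never addresses this unique extension property, which is where $\diamondsuit$ was used in the original argument and where $\diacoh$ must do its work. Second, you cannot arrange the counterexample to be separably represented: by Rosenberg's theorem (cited in the paper) a counterexample to Naimark's problem is never separably represented; separable representability is achieved only in Theorem \ref{T.separable}, for algebras with at least two classes of irreducible representations.
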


More specifically, we isolate a combinatorial principle $\diacoh$ that, together with the Continuum Hypothesis, implies the existence of a counterexample to Naimark's problem. Then we show that $\diacoh+\ax{CH}$ is consistent with the failure of $\diamondsuit_{\aleph_1}$ (see Appendix \ref{S.diamond}). 

A simple \cstar-algebra with a unique irreducible representation up to unitary equivalence that is represented on a Hilbert space of density character strictly smaller than $2^{\aleph_0}$ is necessarily isomorphic to an algebra of compact operators (see e.g., \cite[Corollary~5.5.6]{Fa:STCstar}). We prove the `next  best thing' by constructing a separably represented counterexample to Glimm's dichotomy. More precisely, we obtain the following: 

\begin{theoalph}\label{C.separable}
For any $m\geq2$, $\diacoh+\ax{CH}$ implies that there exists a separably represented, simple, unital \cstar-algebra with exactly $m$ irreducible representations up to unitary equivalence.
\end{theoalph}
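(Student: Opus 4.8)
The plan is to run the transfinite construction behind Theorem~\ref{realmain}, but to start from $m$ seeds rather than one and to carry it out concretely inside $\B(H)$ for a fixed separable Hilbert space $H$, so that separable representability is built in from the start. Fix a separable, simple, unital \cstar-algebra $A_0$ faithfully represented on $H$ (an infinite-dimensional UHF algebra will do) together with $m$ pairwise disjoint pure states $\phi_1,\dots,\phi_m$, realized as vector states $\langle\,\cdot\,\xi_i,\xi_i\rangle$ for unit vectors $\xi_i\in H$. I then build an increasing, continuous chain $(A_\alpha)_{\alpha<\omega_1}$ of separable, simple, unital subalgebras of $\B(H)$, all sharing the unit $1_H$, and set $A=\overline{\bigcup_{\alpha<\omega_1}A_\alpha}\subseteq\B(H)$. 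Simplicity of $A$ is automatic, since any nonzero ideal meets some $A_\alpha$ in a nonzero ideal, which is all of $A_\alpha$, so the ideal contains $1_H$. Separable representability is automatic as well: the inclusion $A\hookrightarrow\B(H)$ is a faithful representation on a separable space. This is the one place $\ax{CH}$ enters, as it is what lets an algebra of density character $\aleph_1$ embed into $\B(H)$ at all; arranging each seed to be a vector state of this representation also guarantees that each of the $m$ classes is realized by a pure state with separable GNS space.

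For the lower bound I maintain as an invariant of the construction that each seed restricts to a pure state of $A_\alpha$ and that $\phi_1,\dots,\phi_m$ restrict to pairwise disjoint pure states of $A_\alpha$, for every $\alpha$. Granting this, the seeds remain inequivalent in $A$: it is classical that two pure states at norm-distance strictly less than $2$ are equivalent, so if $\phi_i$ and $\phi_j$ became equivalent in $A$, writing the equivalence as $\phi_j=\phi_i\circ\Ad(u)$ with $u\in A$ unitary and approximating $u$ in norm by a unitary $u_n\in A_{\alpha_n}$ would give $\|\phi_i\circ\Ad(u_n)-\phi_j\|<2$ for large $n$; restricting to $A_{\alpha_n}$ this would make $\phi_i\restriction A_{\alpha_n}$ and $\phi_j\restriction A_{\alpha_n}$ equivalent, contradicting the invariant. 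The same computation shows the invariant is preserved at limit stages. Hence the $m$ seeds yield $m$ pairwise inequivalent, and (by simplicity) faithful, irreducible GNS representations; as spatial equivalence of irreducible representations coincides with unitary equivalence, this produces at least $m$ classes.

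For the upper bound I use $\diacoh$ as the bookkeeping device that, along the chain, anticipates every pure state of the final algebra $A$. At a stage $\alpha$ where the principle hands me a generic pure state $\psi$ of $A_\alpha$ coding a pure state of $A$, and $\psi$ is disjoint from all of $\phi_1\restriction A_\alpha,\dots,\phi_m\restriction A_\alpha$, I invoke the paper's forcing — the one that generically adjoins an automorphism with a prescribed action on the pure state space — to pass to a separable, simple, unital extension $A_{\alpha+1}\subseteq\B(H)$ carrying a unitary $u$ with $\psi\circ\Ad(u)=\phi_1$ on $A_\alpha$, while prescribing that the seeds stay pure and pairwise disjoint. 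Since equivalence to a seed, if it holds in $A$, is already witnessed at some stage (by the same restriction argument as above), the bookkeeping merges into the class of $\phi_1$ exactly the pure states disjoint from all seeds, so no pure state of $A$ survives disjoint from every seed. The decisive feature of $\diacoh$ is that, although a pure state of $A$ is not determined by its restriction to any single $A_\alpha$ and extends generically in many ways, the weakened diamond still guesses precisely the Cohen-generic extensions produced by the forcing, so a density/reflection argument shows every pure state of $A$ is caught; combined with the lower bound this gives exactly $m$ classes.

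The main obstacle, beyond importing the forcing and the $\diacoh$-guessing already developed for Theorem~\ref{realmain}, is the passage from one class to $m$. In the one-class Naimark construction the merging unitary may intertwine anything, whereas here the forcing must adjoin a unitary that merges the anticipated pure state into the chosen class while \emph{not} intertwining the distinct seed representations, i.e. while preserving both the purity and the mutual disjointness of all $m$ seeds at the successor step. I expect the delicate point to be checking that this constraint on the prescribed action — fixing the $m$ seed classes — is compatible with the genericity the forcing needs in order for $\diacoh$ to guess its generic extensions. Once this is secured, the remaining verifications (purity and disjointness at limits via the norm argument, and realization of each class on a separable GNS space via the $\xi_i$) are routine.
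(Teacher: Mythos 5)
Your construction cannot work, and the obstruction is not the ``delicate point'' you flag at the end (compatibility of the prescribed action with genericity) but something the paper itself proves to be impossible. You arrange \emph{all} $m$ seed classes as vector states on the fixed separable Hilbert space $H$, and you aim to show that every pure state of $A$ is equivalent to a seed. If this succeeded, $A$ would be a simple, unital, infinite-dimensional (hence non-type I) \cstar-algebra with only finitely many classes of irreducible representations, \emph{all} of them on separable Hilbert spaces. This contradicts the Rosenberg-type Proposition at the end of \S 7 of the paper: a non-type I \cstar-algebra all of whose irreducible representations are on separable Hilbert spaces has at least $2^{\aleph_0}$ spatially inequivalent irreducible representations. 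Theorem~\ref{C.separable} only asserts that the \emph{algebra} is separably represented, i.e.\ that \emph{some} irreducible representation (faithful, by simplicity) acts on a separable space; it is provably false that all $m$ classes can be so realized. One can also see exactly where your recursion breaks. To merge $\psi$ into the class of $\phi_1$ via a crossed product $A_\alpha\rtimes_\Phi\bZ$ you need the $\Phi$-orbit of $\phi_1\rs A_\alpha$ to meet the class of $\psi$, i.e.\ $\psi\sim(\phi_1\rs A_\alpha)\circ\Phi^n$ for some $n$ (this is the mechanism behind Corollary~\ref{uep}(2)). On the other hand, if the vector state $\phi_1\rs A_{\alpha+1}$ is pure and $\overline{A_{\alpha+1}\xi_1}=\overline{A_\alpha\xi_1}$, then the canonical unitary $u\in A_{\alpha+1}$ leaves this subspace invariant and witnesses $(\phi_1\rs A_\alpha)\circ\Phi\sim\phi_1\rs A_\alpha$. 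The two requirements together give $\psi\sim\phi_1\rs A_\alpha$, a contradiction; so at every merging stage the space $\overline{A_\alpha\xi_1}$ must grow strictly. Merging stages must occur uncountably often (otherwise $A$ is separable and Glimm's dichotomy already gives $2^{\aleph_0}$ classes), so you would obtain a strictly increasing $\aleph_1$-chain of closed subspaces of the separable space $H$, which is impossible. This also matches the behaviour of the paper's generic automorphism: by Theorem~\ref{agap}, $\Phi_\kG$ moves \emph{every} ground-model pure state to an inequivalent one, so every ground-model pure state has a unique pure extension to the $\bZ$-crossed product and its GNS space necessarily grows.

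The paper's proof of Theorem~\ref{C.separable} (via Theorem~\ref{T.separable}) is engineered precisely around this obstruction, and this is where it genuinely diverges from a ``Theorem~\ref{realmain} with $m$ seeds'' plan. One designated class --- the one absorbing all bookkeeping merges --- is allowed to have a non-separable GNS space; the remaining $m-1$ classes are kept on separable spaces \emph{not} by giving them unique pure extensions at each step (which, as above, forces GNS growth) but by making the corresponding states exactly invariant under an order-two automorphism $\Theta_\kG=\Phi_\kG\circ\Theta\circ\Phi_\kG^{-1}$ and passing to crossed products by $\bZ/2\bZ$. The crux is Lemma~\ref{L.PureExtensions}: a $\Theta_\kG$-invariant pure state has exactly \emph{two} pure extensions to the crossed product, and both have the \emph{same} GNS Hilbert space; Lemma~\ref{L.Extension} packages this together with the merging of the anticipated state into the absorbing class. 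The hypothesis $m\geq 2$ is exactly what makes the split possible: $m-1\geq 1$ separable classes (any one of which witnesses separable representability) plus one non-separable absorbing class. A minor further correction: $\ax{CH}$ is not what ``lets an algebra of density character $\aleph_1$ embed into $\cB(H)$''; in the paper it is used only for the bookkeeping surjection $f\colon\aleph_1\to H_{\aleph_1}$, and separable representability comes from tracking the GNS spaces $\ell_2(A_\alpha,\varphi_i^\alpha)$ abstractly, with no ambient $\cB(H)$ at all.
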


This is a special case of Theorem \ref{T.separable} proved below. 

Our principal technical contribution is the introduction of a forcing notion that generically adds an automorphism of a given \cstar-algebra with a prescribed action on its space of pure states (see \S\ref{S.forcingE}, \S\ref{S.unique}, and \S\ref{S.X}), which constitute a generalization of a celebrated theorem of Kishimoto, Ozawa, and Sakai (see \cite{KiOzSa}).

\subsubsection*{Acknowledgments}

Some of the results of this paper come from the first author's masters thesis written under the second author's supervision. We are indebted to Ryszard Nest and Chris Schafhauser for enlightening discussions. We also wish to thank Andrea Vaccaro and Alessandro Vignati for helpful comments on the early versions of this paper, and to  Assaf Rinot for precious comments on precious stones\footnote{Diamonds.} that considerably improved the presentation of the transfinite constructions in this paper.

\section{Preliminaries and notation}

The reader is assumed to be familiar with the basics of forcing and the basics of \cstar-algebras. Our notation and terminology follow \cite{Black:Operator} for operator algebras, \cite{Jech:SetTheory} and \cite{Ku:Set} for set theory (in particular, forcing), and \cite{Fa:STCstar} for both (except forcing). It is understood that \cite{Fa:STCstar} is used as a reference only for the reader's (as well as the author's) convenience; none of the results referred to are claimed to be due to the author of \cite{Fa:STCstar}.

\subsection{\cstar-algebras and their representations}

\cstar-algebras are complex Banach algebras with an involution $^*$ that satisfy the \cstar-equality, $\|aa^*\|=\|a\|^2$. By a result of Gelfand and Naimark, these are exactly the algebras isomorphic to a norm-closed, $^*$-closed, subalgebra of the algebra $\cB(H)$ of bounded linear operators on a complex Hilbert space $H$. A homomorphism between \cstar-algebras that preserves the involution is called a \emph{$^*$-homomorphism}, and a $^*$-homomorphism into $\cB(H)$ is a \emph{representation}. A representation $\pi\colon A\to\cB(H)$ is \emph{irreducible} if $H$ has no nontrivial closed subspaces invariant under the image of $A$. A representation is \emph{faithful} if it is injective. Every faithful representation is necessarily isometric. More generally, all $^*$-homomorphisms are contractive (take note that in the theory of operator algebras `contractive' is synonymous with `1-Lipshitz.') When $A$ is unital, $\U(A)$ is the set of its unitary elements, i.e., those $u\in A$ such that $uu^*=u^*u=1_A$.

\subsection{States}\label{subsecstates}

A bounded linear functional $\varphi$ on a \cstar-algebra $A$ is a \emph{state} if it is \emph{positive}, i.e., $\varphi(a^*a)\geq 0$ for all $a\in A$, and $\|\varphi\|=1$. The space of all states on~$A$ is denoted $\st(A)$. Via the Gelfand--Naimark--Segal (GNS) construction (see \cite[\S 1.10]{Fa:STCstar}), every state $\varphi$ on $A$ is associated with a representation $\pi_\varphi\colon A\to\cB(H_\varphi)$ such that a unique (up to multiplication by a scalar of modulus 1)  unit vector,  $\xi_\varphi$ in $H_\varphi$ satisfies that $\varphi(a)=(\pi_\varphi(a)\xi_\varphi|\xi_\varphi)$ for all $a\in A$. The triplet $(\pi_\varphi,H_\varphi,\xi_\varphi)$ is the \emph{GNS triplet} associated with $\varphi$. Conversely, every representation $(\pi,H)$ of $A$ with a cyclic vector (i.e., some $\xi\in H$ such that the $\pi[A]$-orbit of $\xi$ is dense in $H$) is of the form $\pi_\varphi$ for some state $\varphi$ on $\st(A)$. When $A$ is unital, the states space of $A$ is a weak$^*$-compact and convex set. The extreme points of $\st(A)$ are called \emph{pure states} and a state is pure if, and only if, the corresponding GNS representation is \emph{irreducible}, if, and only if, $\pi_\varphi[A]$ is dense in $\cB(H_\varphi)$ with respect to the weak operator topology (see \cite[\S 3.6]{Fa:STCstar}). The space of all pure states on $A$ is denoted~$\pure(A)$.

\subsection{Automorphisms and crossed products}

We will say that an automorphism $\Phi$ of a \cstar-algebra $A$ is \emph{inner} if it is of the form $\ad u(a):=uau^*$ for some unitary $u$ in the unitization of $A$. It is \emph{approximately inner} if there exists a net of unitaries $(u_p:p\in\kG)$ in the unitization of $A$ such that $\Phi(a)=\lim_\kG\ad u_p(a)$ for each $a\in A$. The automorphism group of $A$ is denoted $\Aut(A)$. 

An automorphism $\Phi$ of $A$ determines a continuous action of $\bZ$ on $A$ given by $n.a:=\Phi^n(a)$. To such a non-commutative dynamical system one can associate a \emph{reduced crossed product}, $A\rtimes_\Phi\bZ$. This \cstar-algebra is generated by an isomorphic copy of $A$ (routinely identified with $A$) and a unitary $u$ that implements $\Phi$ on $A$, in the sense that $\Ad u(a)=\Phi(a)$ for all $a\in A$. For more details see e.g., \cite[\S 4.1]{BrOz:C*} or \cite[\S 2.4.2]{Fa:STCstar}.

\subsection{Equivalences of states and representations}\label{S.equiv}

We say that two representations $(\pi_0,H_0)$ and $(\pi_1,H_1)$ of a \cstar-algebra are \emph{spatially equivalent}, and we write $\pi_0\sim\pi_1$, if there exists a *-isomorphism $\Phi\colon\sB(H_0)\to\sB(H_1)$ such that $\Phi\circ\pi_0=\pi_1$. Two pure states $\varphi$ and $\psi$ of $A$ are called \emph{conjugate} if there exists an automorphism $\Phi$ of $A$ such that $\varphi\circ\Phi=\psi$. If $\Phi$ can be chosen to be inner, we say that $\varphi$ and $\psi$ are \emph{unitarily equivalent} and write $\varphi\sim\psi$. Using a GNS argument plus the Kadison transitivity theorem, it can be shown (see \cite[Lemma 3.8.1]{Fa:STCstar}) that $\varphi\sim\psi$ if, and only if, $\pi_\varphi\sim\pi_\psi$, if, and only if, there is a unitary $u$ in the unitization of $A$ such that $\|\varphi\circ\ad u-\psi\|<2$. This implies that Naimark's problem as stated in the previous section is equivalent to asking whether every \cstar-algebra with a unique pure state up to unitary equivalence is isomorphic to an algebra of compact operators.

\subsection{The space $\pure_m(A)$}\label{S.pure}

Following \cite[\S 5.6]{Fa:STCstar}, for $m\in\bN$, the space of $m$-tuples of pairwise inequivalent pure states of $A$ is denoted $\pure_m(A)$. A typical element of $\pure_m(A)$ is of the form $\bar\varphi=(\varphi_i:i<m)$. The automorphism group of $A$ acts naturally on $\pure_m(A)$ as follows: If $\bar{\varphi}\in\pure_m(A)$ and $\Phi\in\aut(A)$ then $\bar{\varphi}\circ\Phi:=(\varphi_i\circ\Phi:i<m)$.

We write $G\Subset A$ if $G$ is a finite subset of $A$. If $G\Subset A$ and $\delta>0$, we write
\begin{equation*}
    \bar{\varphi}\approx_{G,\delta}\bar{\psi}\quad\text{if, and only if,}\quad\max_{b\in G}\left(\max_{i<m}|\varphi_i(b)-\psi_i(b)|\right)<\delta.
\end{equation*}
Thus, 
\[
U_{G,\delta}(\bar \varphi):=\{\bar\psi:\bar\psi\approx_{G,\delta} \bar\varphi\}
\]
 is a typical weak$^*$-open neighbourhood of $\bar\varphi$ in $\pure_m(A)$. These sets range over a weak$^*$-neighbourhood basis of $\bar\varphi$ in $\pure_m(A)$ as $\delta$ ranges over positive reals and $G$ ranges over finite subsets of $A$ (or finite subsets of a fixed dense subset of $A$, for this use the fact that states have norm $1$).  

It is worth mentioning that the notation $\bar\varphi\sim\bar\psi$ is reserved for the existence of a unitary $u\in\U(A)$ such that $\bar\varphi\circ\ad u=\bar\psi$ (cf. Definition \ref{pwue}).

Given two tuples of pairwise inequivalent pure states $\bar\varphi\in\pure_m(A)$ and $\bar\psi\in\pure_l(A)$, we denote by $\bar\varphi^\frown\bar\psi$ their \emph{concatenation} $(\varphi_0,\dots,\varphi_{m-1},\psi_0,\dots,\psi_{l-1})\in\pure(A)^{m+l}$.

\subsection{Type I \cstar-algebras}

A \cstar-algebra $A$ is \emph{type I} if the ideal of compact operators on $H$ is included in $\pi[A]$ for every irreducible representation $(\pi,H)$ of $A$, and \emph{non-type I} if it is not type I. An example of a non-type I \cstar-algebra is the \emph{CAR algebra}, $\CAR:=\bigotimes_\bbN M_2(\bC)$, and by a result due to Glimm (see \cite[Theorem 3.7.2]{Fa:STCstar}), a \cstar-algebra is non-type I if, and only if, it has a $\C$-subalgebra which has a quotient isomorphic to $\CAR$.

\subsection{Transitive models of \ZFCmP}\label{S.models}

Our ambient theory is $\ax{ZFC}$, the Zermelo--Fraenkel set theory with the Axiom of Choice (see e.g., \cite[\S A.1]{Fa:STCstar}). Because of meta-mathematical obstructions of no direct relevance to the present paper, while working in $\ax{ZFC}$ one cannot prove the existence of a model of $\ax{ZFC}$. Fortunately, for any uncountable regular cardinal $\kappa$ the set $H(\kappa)$ of all sets whose hereditary closure has cardinality smaller than $\kappa$ (see e.g., \cite[\S A.7]{Fa:STCstar}) is a model of \ZFCmP, the theory obtained by removing the Power Set axiom from $\ax{ZFC}$.\footnote{Purists may prefer working with transitive structures closed under the rudimentary functions, see e.g., \cite[Definition 27.2]{Jech:SetTheory}.} This fragment of $\ax{ZFC}$ suffices for our purposes.\footnote{It may be worth mentioning that the Power Set axiom is far from being useless. By a result of Harvey Friedman, Borel Determinacy cannot be proved in \ZFCmP--its proof even requires uncountably many iterations of the power set operation--and yet it is a theorem of \ZFC; see \cite{martin1975borel}.} 

Borel subsets of a Polish space with a fixed countable basis can be coded by elements of $\bbN^\bbN$ (see e.g., \cite[p. 504]{Jech:SetTheory}). This coding is sufficiently absolute, so that a transitive model of \ZFCmP{} that does not include the set of all real numbers can still contains codes for some Borel sets and be correct about their properties (the proof of \cite[Lemma~25.46]{Jech:SetTheory} applies to show this).

\subsection{Forcing}\label{S.forcing}

A \emph{forcing notion} is a partially ordered set $\bbP$. The elements of $\bbP$ are also called \emph{conditions}, and if $p\leq q$ then $p$ is said to \emph{extend} $q$. Two conditions are \emph{compatible} if a single condition extends both of them. A subset $D$ of $\bbP$ is called \emph{open} if it contains all extensions of all of its elements. A subset $D$ of $\bbP$ is called \emph{dense} if it contains some extension of every condition in $\bbP$. A subset $\kG$ of $\bbP$ is a \emph{filter} if it satisfies the following two conditions: (i) $p\in \kG$ and $p\leq q$ implies $q\in \kG$, and (ii) every two elements of $\kG$ have a common extension in $\kG$. If $\cD$ is a family of dense open subsets of $\bbP$, then a filter $\kG$ is called \emph{$\cD$-generic} if it intersects every element of $\cD$ non-trivially.  

If $M$ is a transitive model of \ZFCmP, $\bbP$ is a forcing notion in $M$, and a filter $\kG\subseteq \bbP$ intersects all dense open subsets of $\bbP$ that belong to $M$, then $\kG$ is said to be \emph{$M$-generic}. In this situation, one can define the forcing (also called generic) extension $M[\kG]$ which is a transitive model of \ZFCmP{} that includes $M$ and contains $\kG$. The model $M$ is usually referred to as the \emph{ground model}.

\section{Forcing an approximately inner automorphism}\label{S.forcingE}

Let $A$ be a simple and unital \cstar-algebra. Given two elements $\bar\varphi$ and $\bar\psi$ of $\pure_m(A)$, we will define a forcing notion $\bE_{A^\circ}(\bar\varphi,\bar\psi)$, depending on a $\bQ+i\bQ$-subalgebra $A^\circ$ of $A$, whose generic object codes an approximately inner automorphism $\Phi_\kG$ of $A$ such that $\bar\varphi\circ\Phi_\kG=\bar\psi$.

Besides $\diamondsuit_{\aleph_1}$, the Akemann--Weaver construction uses a refinement of a deep 2001 result due to Kishimoto, Ozawa, and Sakai (see \cite{KiOzSa}, also \cite[\S 5.6]{Fa:STCstar}) that implies that all pure states of a separable, simple, and unital $\C$-algebra are conjugate by an approximately (and even asymptotically) inner automorphism. A crucial lemma in the proof of the Kishimoto--Ozawa--Sakai theorem (see \cite[Lemma 2.2]{KiOzSa}, also \cite[Lemma~5.6.7]{Fa:STCstar}) together with \cite[Property 7]{FuKaKi} serves as a motivation for the following lemma. 

\begin{lem}\label{goodlemma}
Let $A$ be a simple, unital, and infinite-dimensional \cstar-algebra. For all $m\geq 1$, $\bar\varphi\in \pure_m(A)$, $F\Subset A$, and $\e>0$ the following holds: There exist a $G\Subset A$ and $\delta>0$ such that for all $\bar\theta\in\pure_m(A)$, if $\bar{\varphi}\approx_{G,\delta}\bar{\theta}$ then for all $K\Subset A$ and every $\gamma>0$ there exists a unitary $v\in\U(A)$ such that $\bar\varphi\circ\Ad v\approx_{K,\gamma}\bar\theta$ and $\|b-\Ad v(b)\|<\varepsilon$ for every $b\in F$.
\end{lem}

\begin{proof} We commence the proof by restating it in the language introduced in \S\ref{S.pure}. 
It asserts that for every $m\geq 1$,  all $F\Subset A$ and $\varepsilon>0$, and all $\bar \varphi\in \pure_m(A)$,  there is a basic open neighbourhood $U_{G,\delta}(\bar \varphi)$ of $\bar\varphi$ such that for every $\bar\theta$ in this neighbourhood and every basic open neighbourhood $U_{K,\gamma}(\bar \theta)$ of $\bar \theta$, some $v\in \U(A)$ satisfies $\max_{b\in F} \|[v,b]\|<\varepsilon$ and  $\bar\varphi\circ \Ad v\in U_{K,\gamma}(\bar \theta)$. 

Equivalently,  for every $m\geq 1$,  all $F\Subset A$ and $\varepsilon>0$, and all $\bar \varphi\in \pure_m(A)$, there is a basic open neighbourhood $U_{G,\delta}(\bar \varphi)$ of $\bar\varphi$ such that 
 the set 
 \[
 \{\bar\varphi\circ \Ad v\mid v\in\U(A), \max_{b\in F} \|[v,b]\|<\varepsilon\}
 \]
  is weak$^*$-dense in  $U_{G,\delta}(\bar \varphi)$. 

\cite[Lemma 5.6.7]{Fa:STCstar} falls just a little short of proving this, under the same assumptions on $A$. In the case when $A$ is unital, hence $A=\tilde A$, it asserts that for every $m\geq 1$, all $F\Subset A$ and $\varepsilon>0$, and all $\bar \varphi\in \pure_m(A)$, there is a basic open neighbourhood $U_{G,\delta'}(\bar \varphi)$ of $\bar\varphi$ such that for every $\bar \psi\in U_{G,\delta'}(\bar \varphi)$ with $\bar\psi\sim\bar\varphi$ (see \S\ref{S.equiv}), there exists $u_1\in \U(A)$ such that $\max_{b\in F} \|[b,u_1]\|<24\varepsilon$ and $\bar\varphi\circ \Ad u_1=\bar\psi$.\footnote{The unitary $u_1$ obtained in \cite[Lemma 5.6.7]{Fa:STCstar} is homotopic to $1_A$, and all the unitaries in the homotopy path satisfy $\max_{b\in F} \|[b,u_t]\|<24\varepsilon$, but we don't need this.}

Hence, in order to complete the proof, it will suffice to replace $\varepsilon$ with $\varepsilon/24$ and show that the set $\{\bar\psi\mid \bar\psi\sim\bar \varphi\}$ is dense in $U_{G,\delta'}$.   
Since $A$ is simple and infinite-dimensional, every representation $\pi\colon A\to \cB(H)$ satisfies $\pi[A]\cap \cK(H)=\{0\}$. It is a standard consequence of this condition and  Glimm's Lemma that  for every $\bar\psi$ in $\pure_m(A)$, the unitary orbit $\{\bar\psi\circ\ad u: u\in\U(A)\}$  is dense in $\pure_m(A)$  (see e.g., \cite[Proposition 5.2.9]{Fa:STCstar}). This completes the proof. 
\end{proof}

The previous lemma motivates the following definition.

\begin{defin}\label{D.1}
Given a \cstar-algebra $A$ and $m\geq 1$, let $\bar\varphi\in \pure_m(A)$, $F\Subset A$, and $\e>0$. We will say that a pair $(G,\delta)$ with $G\Subset A$ and $\delta>0$ is \emph{$(\bar\varphi,F,\varepsilon)$-good} if for all $\bar\theta\in\pure_m(A)$, if $\bar{\varphi}\approx_{G,\delta}\bar{\theta}$ then for all $K\Subset A$ and every $\gamma>0$ there exists a unitary $u\in\U(A)$ such that $\bar\varphi\circ\Ad u\approx_{K,\gamma}\bar\theta$ and $\|b-\Ad u(b)\|<\varepsilon$ for every $b\in F$.
\end{defin}

Analogously to the restatement of Lemma~\ref{goodlemma} given in the first paragraph of its proof, one obtains  the following. 

\begin{lem} For  $m\geq 1$,   $F\Subset A$,   $\varepsilon>0$, and  $\bar \varphi\in \pure_m(A)$,  a pair $(G,\delta)$ with $G\Subset A$ and $\delta>0$ is $(\bar\varphi,F,\varepsilon)$-good if and only if 
 the set 
 \[
 \{\bar\varphi\circ \Ad v\mid v\in\U(A), \max_{b\in F} \|[v,b]\|<\varepsilon\}
 \]
  is weak$^*$-dense in  $U_{G,\delta}(\bar \varphi)$. \qed
\end{lem} 

Thus we have the following equivalent reformulation of Lemma~\ref{goodlemma}. 
\begin{lem} 
 If  $A$ is a simple, unital,  infinite-dimensional \cstar-algebra, then for all $m\geq 1$, $\bar\varphi\in \pure_m(A)$, $F\Subset A$, and $\e>0$, there exists a $(\bar\varphi,F,\varepsilon)$-good pair $(G,\delta)$. \qed
\end{lem}

Fix now a simple and unital \cstar-algebra $A$, and tuples $\bar\varphi$ and $\bar\psi$ in $\pure_m(A)$. We also fix a norm-dense $\bQ+i\bQ$-subalgebra $A^\circ$ of $A$ with minimal cardinality such that $\U(A)\cap A^\circ$ is norm-dense in $\U(A)$. In particular, when $A$ is separable, $A^\circ$ will be countable.

In Definition \ref{poset}, we will introduce a forcing notion $\bE_{A^\circ}(\bar\varphi,\bar\psi)$ which generically adds an automorphism $\Phi_\kG$ of $A$ such that $\bar\varphi\circ\Phi_\kG=\bar\psi$. More precisely, it adds two nets of unitaries, $v_p$ and $w_p$, for $p\in \kG$, such that each of the nets of $\Ad v_p$ and $\Ad w_p^*$ indexed by $p\in \kG$ converges pointwise to an automorphism of $A$ (this is assured by condition (c)). The automorphism $\Phi_\kG$ is the composition of these two automorphisms. 

\begin{defin}\label{poset}
Let $\bE_{A^\circ}(\bar\varphi,\bar\psi)$ be the set of tuples
\begin{equation*}
q=(F_q,G_q,\varepsilon_q,\delta_q,v_q,w_q)    
\end{equation*}
such that:
\begin{enumerate}
    \item $F_q$ and $G_q$ are finite subsets of $A^\circ$,
    \item $\varepsilon_q$ and $\delta_q$ are positive real numbers,
    \item $v_q$ and $w_q$ are unitaries of $A$ in $A^\circ$,
    \item $(G_q,\delta_q)$ is a $(\bar\varphi\circ\ad v_q,F_q\cup\ad v_q^*[F_q],\varepsilon_q/3)$-good pair, and
    \item $\bar\varphi\circ\ad v_q\approx_{G_q,\delta_q}\bar\psi\circ\ad w_q$.
\end{enumerate}
We order $\bE_{A^\circ}(\bar\varphi,\bar\psi)$ by $p\leq q$ if:
\begin{enumerate}
    \item[(a)] $F_p\supseteq F_q$, $G_p\supseteq G_q$,
    \item[(b)] $\varepsilon_p\leq\varepsilon_q$, $\delta_p\leq\delta_q$, and
    \item[(c)] for all $b\in F_q$
    \begin{align*}
        \max\left\{\left\|\ad v_p(b)-\ad v_q(b)\right\|,\left\|\ad v_p^*(b)-\ad v_q^*(b)\right\|\right\}&\leq\varepsilon_q-\varepsilon_p,\text{ and}\\
        \max\left\{\left\|\ad w_p(b)-\ad w_q(b)\right\|,\left\|\ad w_p^*(b)-\ad w_q^*(b)\right\|\right\}&\leq\varepsilon_q-\varepsilon_p.
    \end{align*}
\end{enumerate}
\end{defin}

\begin{rem}
The bound $\varepsilon_q-\varepsilon_p$ in (c) of Definition \ref{poset} is used to assure that the relation $\leq$ is transitive on $\bE_{A^\circ}(\bar\varphi,\bar\psi)$. This idea is taken from  \cite{farahembedding}.
\end{rem}

\begin{lem}\label{density}
For all finite subsets $F$ and $G$ of $A^\circ$, and all positive real numbers $\varepsilon$ and $\delta$, the set $D(F,G,\varepsilon,\delta)$ of conditions $p$ such that $F\subseteq F_p$, $G\subseteq G_p$, $\varepsilon_p\leq\varepsilon$, and $\delta_p\leq\delta$ is a dense and open\footnote{See \S\ref{S.forcing} for definitions.} subset of $\bE_{A^\circ}(\bar\varphi,\bar\psi)$.
\end{lem}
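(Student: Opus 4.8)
The plan is to prove the two assertions of Lemma \ref{density} separately, with openness being routine and density being the substantive point. For \textbf{openness}: suppose $p \in D_{F\,G}^{\varepsilon\,\delta}$ and $r \leq p$. I must check $r \in D_{F\,G}^{\varepsilon\,\delta}$. Since $r \leq p$, clause (a) of Definition \ref{poset} gives $F_r \supseteq F_p \supseteq F$ and $G_r \supseteq G_p \supseteq G$, while clause (b) gives $\varepsilon_r \leq \varepsilon_p \leq \varepsilon$ and $\delta_r \leq \delta_p \leq \delta$. These are exactly the four membership conditions for $D_{F\,G}^{\varepsilon\,\delta}$, so $r$ is in the set and openness holds with no estimates required.

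For \textbf{density}, the task is: given an arbitrary condition $q = (F_q, G_q, \varepsilon_q, \delta_q, v_q, w_q)$, produce $p \leq q$ with $p \in D_{F\,G}^{\varepsilon\,\delta}$. The natural candidate keeps the unitaries fixed, setting $v_p := v_q$ and $w_p := w_q$, and takes $F_p := F_q \cup F$, $G_p := G_q \cup G$, $\varepsilon_p := \min(\varepsilon_q, \varepsilon)$, and $\delta_p := \min(\delta_q, \delta)$. With $v_p = v_q$ and $w_p = w_q$, clause (c) of the ordering is satisfied trivially since the left-hand sides all vanish and $\varepsilon_q - \varepsilon_p \geq 0$; clauses (a) and (b) are immediate from the choices. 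So the only real work is verifying that this tuple is in fact a \emph{condition}, i.e., that it satisfies requirements (4) and (5) of Definition \ref{poset} (requirements (1)--(3) are clear from the construction).

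The crux, which I expect to be the main obstacle, is that shrinking $\varepsilon_q$ to $\varepsilon_p$ and enlarging $F_q$ to $F_p$ changes the good-pair requirement (4): I now need $(G_p, \delta_p)$ to be a $(\bar\varphi \circ \ad v_q, F_p \cup \ad v_q^*[F_p], \varepsilon_p/3)$-good pair, and simultaneously requirement (5), $\bar\psi \circ \ad w_q \in B(\bar\varphi \circ \ad v_q, G_p, \delta_p)$. The fix is to invoke Lemma \ref{goodlemma}\eqref{2.good} (via the good-pair definition) \emph{freshly} for the tuple $\bar\varphi \circ \ad v_q$ with the enlarged finite set $F_p \cup \ad v_q^*[F_p]$ and the smaller tolerance $\varepsilon_p/3$: this produces \emph{some} good pair $(G', \delta')$. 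I then take $G_p := G_q \cup G \cup G'$ and choose $\delta_p \leq \min(\delta_q, \delta, \delta')$ small enough. Here I must use the monotonicity built into the good-pair/neighbourhood-basis structure: a good pair remains good when $G$ is enlarged and $\delta$ is decreased, since $B(\bar\varphi^\frown\bar\rho', G_p, \delta_p) \subseteq B(\bar\varphi^\frown\bar\rho', G', \delta')$, so the closure of the relevant unitary-orbit set still contains the smaller neighbourhood. The subtle point deserving care is that the tolerance in the commutator has dropped from $\varepsilon_q/3$ to $\varepsilon_p/3 \leq \varepsilon_q/3$, which \emph{shrinks} the set whose closure must contain a neighbourhood; this is precisely why I cannot merely reuse $(G_q, \delta_q)$ and must reapply Lemma \ref{goodlemma} at the new, smaller tolerance.

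Finally I verify requirement (5) survives the shrinking: since $q$ was a condition we had $\bar\psi \circ \ad w_q \in B(\bar\varphi \circ \ad v_q, G_q, \delta_q)$, meaning $\max_{b \in G_q}\max_i |(\varphi_i \circ \ad v_q)(b) - (\psi_i \circ \ad w_q)(b)| < \delta_q$. Shrinking $\delta_q$ to $\delta_p$ and enlarging $G_q$ to $G_p$ could in principle break this inequality, so I must choose $\delta_p$ after fixing $G_p$, taking it strictly below both $\delta_q$ and the actual realized maximum of the displayed differences over the enlarged set $G_p$; concretely, after selecting $G_p$ I set $\delta_p := \min\{\delta_q, \delta, \delta', \eta\}$ where $\eta > \max_{b \in G_p}\max_i |(\varphi_i \circ \ad v_q)(b) - (\psi_i \circ \ad w_q)(b)|$, which is possible precisely because this maximum is a finite quantity strictly realized and we are free to pick $\delta_p$ above it. With $v_p, w_p, F_p, G_p, \varepsilon_p, \delta_p$ so chosen, $p$ is a genuine condition below $q$ lying in $D_{F\,G}^{\varepsilon\,\delta}$, completing the density argument and the proof.
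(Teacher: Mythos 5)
Your openness argument is correct (and matches the paper, where it is dismissed as obvious), but the density half has a fatal gap, located exactly where you try to rescue condition (5). Keeping $v_p=v_q$ and $w_p=w_q$ means the two state tuples $\bar\varphi\circ\ad v_q$ and $\bar\psi\circ\ad w_q$ are fixed, and condition (5) for $p$ demands
\[
M:=\max_{b\in G_p}\max_{i<m}\left|(\varphi_i\circ\ad v_q)(b)-(\psi_i\circ\ad w_q)(b)\right|<\delta_p .
\]
Membership in $D_{F\,G}^{\varepsilon\,\delta}$ forces $\delta_p\leq\delta$, and clause (b) of the ordering forces $\delta_p\leq\delta_q$, so your prescription $\delta_p:=\min\{\delta_q,\delta,\delta',\eta\}$ with $\eta>M$ only yields condition (5) if it happens that $M<\min\{\delta_q,\delta,\delta'\}$. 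But $G_p$ must contain the arbitrary prescribed set $G$, and nothing controls how far apart the two fixed tuples are on $G$: for $b\in G$ the difference can be as large as $2$, while $\delta$ may have been prescribed to be $10^{-6}$. In that situation no admissible $\delta_p$ exists and your construction produces no condition at all; the sentence ``we are free to pick $\delta_p$ above it'' is precisely what the constraints $\delta_p\leq\delta$ and $\delta_p\leq\delta_q$ forbid. (Your handling of condition (4) alone is fine: goodness is indeed preserved by enlarging $G$ and shrinking $\delta$, so a fresh application of Lemma \ref{goodlemma} settles it.)

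This is not a repairable technicality; it is the reason the forcing is designed as it is, and density genuinely requires moving the unitaries. The paper's proof first fixes a $(\bar\psi\circ\ad w_q,F_q\cup\ad w_q^*[F_q],\varepsilon_q)$-good pair $(G_1,\delta_1)$, then uses the goodness of $(G_q,\delta_q)$ (condition (4) for $q$) to find a unitary $v$, almost commuting with $F_q\cup\ad v_q^*[F_q]$, such that $\bar\varphi\circ\ad v_qv\in B(\bar\psi\circ\ad w_q,G_1,\delta_1)$, and sets $v_p:=v_qv$. After choosing a fresh good pair $(G_2,\delta_2)$ for the rotated tuple and the enlarged $F_p$, and setting $G_p:=G\cup G_q\cup G_2$, $\delta_p:=\min\{\delta,\delta_q,\delta_2\}$, it uses the goodness of $(G_1,\delta_1)$ to find $w$, almost commuting with $F_q\cup\ad w_q^*[F_q]$, such that $\bar\psi\circ\ad w_qw\in B(\bar\varphi\circ\ad v_p,G_p,\delta_p)$, and sets $w_p:=w_qw$. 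The two rotations are what make condition (5) hold with the smaller $\delta_p$ on the larger $G_p$, and the almost-commutation requirements (at tolerance $\varepsilon_q/3$) are what keep clause (c) of the ordering true. Without this back-and-forth rotation of both tuples, the two sides simply stay wherever they are, and no choice of parameters can bring them within $\delta_p$ of each other on new test elements.
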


\begin{proof}
It is obvious that  an extension of every condition in $D(F,G,\varepsilon,\delta)$ belongs to $D(F,G,\varepsilon,\delta)$.

Second, we need to prove that every condition has an extension that belongs to $D(F,G,\varepsilon,\delta)$. Let $q\in\bE_{A^\circ}(\bar\varphi,\bar\psi)$ be fixed but arbitrary. By Lemma \ref{goodlemma}, there exists a $(\bar\psi\circ\ad w_q,F_q\cup\ad w_q^*[F_q],\varepsilon_q)$-good pair; denote it $(G_1,\delta_1)$. Since $q$ is a condition, $(G_q,\delta_q)$ is $(\bar\varphi\circ\ad v_q,F_q\cup\ad v_q^*[F_q],\varepsilon_q)$-good and $\bar\varphi\circ\ad v_q\approx_{G_q,\delta_q}\bar\psi\circ\ad w_q$.

Using the goodness of $(G_q,\delta_q)$, choose $v\in\U(A)\cap A^\circ$ to be some unitary such that $\bar\varphi\circ\ad v_qv\approx_{G_1,\delta_1}\bar\psi\circ\ad w_q$ and $\|b-\ad v(b)\|<\varepsilon_q/3$ for all $b\in F_q\cup\ad v_q^*[F_q]$. Set $\varepsilon_p:=\min\{\varepsilon,\varepsilon_q/3\}$, $F_p:=F_q\cup F$, and $v_p:=v_qv$.

By Lemma \ref{goodlemma}, there is a $(\bar\varphi\circ\ad v_p,F_p\cup\ad v_p^*[F_p],\varepsilon_p)$-good pair, denoted $(G_2,\delta_2)$. Let $G_p:=G\cup G_q\cup G_2$ and $\delta_p:=\min\{\delta,\delta_q,\delta_2\}$.

Using now the fact that the pair $(G_1,\delta_1)$ is a good pair, let $w\in\U(A)\cap A^\circ$ be such that $\bar\varphi\circ\ad v_p\approx_{G_p,\delta_p}\bar\psi\circ\ad w_qw$ and $\|b-\ad w(b)\|<\varepsilon_q/3$ for every $b\in F_q\cup\ad w_q^*[F_q]$. Define $w_p$ as $w_qw$ and set $p$ to be $(F_p,G_p,\varepsilon_p,\delta_p,v_p,w_p)$.

Clearly $F\subseteq F_p$, $G\subseteq G_p$, $\varepsilon_p\leq\varepsilon$ and $\delta_p\leq\delta$. Also, since the pair $(G_p,\delta_p)$ is $(\bar\varphi\circ\ad v_p,F_p\cup\ad v_p^*[F_p],\varepsilon_p)$-good, $p\in\bE_{A^\circ}(\bar\varphi,\bar\psi)$. Finally, if $b\in F_q$ then
\begin{align*}
\|\ad v_q^*(b)-\ad v_p^*(b)\|&=\|\ad v(\ad v_q^*(b))-\ad v_q^*(b)\|\\
&<\varepsilon_q/3<\varepsilon_q-\varepsilon_q/3\leq\varepsilon_q-\varepsilon_p.    
\end{align*}
Also, $\|\ad v_q(b)-\ad v_p(b)\|=\|b-\ad v(b)\|<\varepsilon_q-\varepsilon_p$. The calculations for $w_p$ and $w_q$ are analogous and therefore $p\leq q$.
\end{proof}

Two forcing notions $\bP_0$ and $\bP_1$ are said to be  \emph{forcing-equivalent} if they give rise to the same generic extensions, i.e., every generic extension $M[G]$ by $\bP_0$ can be realized as a generic extension $M[H]$ for some $M$-generic filter $H$ on $\bP_1$, and vice versa. The simplest mechanism for assuring forcing equivalence is by finding an isomorphic copy of $\bP_0$ in $\bP_1$ that is also dense (in the forcing sense, see \S\ref{S.forcing}) in it. (A proof of this fact is contained in \cite[{Lemma~III.3.68 ($\beta$)}]{Ku:Set}. The only point is that if $\bbP$ is a dense subordering of $\bbP_1$ then a filter $\kG\subseteq \bbP_1$ is generic over $M$ if and only if $\kG\cap \bbP$ is generic over $M$.) It is straightforward to use this observation to prove that every two countable atomless (i.e., with no minimal elements) forcing notions are equivalent (see e.g., the hint for \cite[Exercise III.3.70]{Ku:Set}). 


\begin{theo}\label{sumup}
Let $A$ be a simple and unital \cstar-algebra, $m\geq 1$ and let $\bar\varphi$ and $\bar\psi$ be elements of $\pure_m(A)$. Then
\begin{enumerate}
    \item \label{1.sumup} Forcing with $\bE_{A^\circ}(\bar\varphi,\bar\psi)$ adds an approximately inner automorphism $\Phi_\kG$ of $A$ such that $\bar\varphi\circ\Phi_\kG=\bar\psi$.
    \item \label{2.sumup} If $A$ is separable, then $\bE_{A^\circ}(\bar\varphi,\bar\psi)$ is equivalent to the Cohen forcing. 
\end{enumerate}
\end{theo}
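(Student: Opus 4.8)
The plan is to establish the two parts of Theorem \ref{sumup} separately, using Lemma \ref{density} as the workhorse for both.

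\textbf{Part \eqref{1.sumup}: extracting the automorphism.} First I would work in a suitable transitive model $M$ of \ZFCmP{} containing (codes for) $A$, $A^\circ$, $\bar\varphi$, $\bar\psi$, and the poset $\bE_{A^\circ}(\bar\varphi,\bar\psi)$, and take an $M$-generic filter $\kG$. The core observation is that condition (c) in Definition \ref{poset} makes the nets $(\ad v_p : p\in\kG)$ and $(\ad w_p : p\in\kG)$ (together with their adjoints) norm-Cauchy on each element $b$ that ever enters some $F_q$: if $p\le q$ then $\|\ad v_p(b)-\ad v_q(b)\|\le \varepsilon_q-\varepsilon_p$, and telescoping along a descending chain in $\kG$ bounds the total variation by $\varepsilon_q$. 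By Lemma \ref{density}, for every $b\in A^\circ$ and every $\varepsilon>0$ the set $D_{\{b\}\,\emptyset}^{\varepsilon\,1}$ is dense and open, so genericity guarantees that $b\in F_p$ for some $p\in\kG$ with $\varepsilon_p$ arbitrarily small; hence the limits
\[
\Phi_v(b):=\lim_{p\in\kG}\ad v_p(b),\qquad \Phi_w(b):=\lim_{p\in\kG}\ad w_p^*(b)
\]
exist for all $b\in A^\circ$. Since each $\ad v_p$ is isometric and $A^\circ$ is norm-dense in $A$, these extend uniquely to isometric endomorphisms of $A$; the analogous Cauchy estimates for the adjoints $\ad v_p^*$, $\ad w_p$ show that $\Phi_v$ and $\Phi_w$ are surjective (their inverses arise as the limits of $\ad v_p^*$, $\ad w_p$), hence automorphisms, and approximate innerness is immediate from the definition. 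I would then set $\Phi_\kG:=\Phi_w^{-1}\circ\Phi_v$ (matching the paper's description of $\Phi_\kG$ as the composition built from $\ad v_p$ and $\ad w_p^*$).

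\textbf{Verifying $\bar\varphi\circ\Phi_\kG=\bar\psi$.} The key point is condition (5) of Definition \ref{poset}, which forces $\bar\psi\circ\ad w_q$ and $\bar\varphi\circ\ad v_q$ to lie within the weak$^*$-ball $B(\bar\varphi\circ\ad v_q,G_q,\delta_q)$. Using density (Lemma \ref{density}) to drive $\delta_p\to 0$ while $G_p$ exhausts a weak$^*$-determining subset of $A^\circ$, genericity yields conditions in $\kG$ along which $|\varphi_i(\ad v_p(b))-\psi_i(\ad w_p(b))|$ becomes arbitrarily small for each coordinate $i<m$ and each $b$ in a dense set. Passing to the limit gives $\varphi_i\circ\Phi_v=\psi_i\circ\Phi_w$ for all $i$, i.e. $\bar\varphi\circ\Phi_v=\bar\psi\circ\Phi_w$, which after applying $\Phi_w^{-1}$ is exactly $\bar\varphi\circ\Phi_\kG=\bar\psi$. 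I expect this weak$^*$-limit bookkeeping to be the main obstacle: one must be careful that the relevant test functionals and test elements are captured by the dense sets of Lemma \ref{density}, and that the estimate in (5), which is only stated at the level of each individual condition $q$, survives the passage to the generic limit.

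\textbf{Part \eqref{2.sumup}: equivalence with Cohen forcing.} When $A$ is separable we take $A^\circ$ countable, so $\bE_{A^\circ}(\bar\varphi,\bar\psi)$ is a countable poset. The standard criterion (see \cite{Ku:Set}) is that every countable atomless separative poset is forcing-equivalent to Cohen forcing, so it suffices to check that $\bE_{A^\circ}(\bar\varphi,\bar\psi)$ is nonempty, atomless, and separative. Nonemptiness and atomlessness follow from Lemma \ref{goodlemma} together with the construction in Lemma \ref{density}: given any condition $q$ one can genuinely tighten $\varepsilon_p<\varepsilon_q$ and perturb $v_p=v_qv$, $w_p=w_qw$ by nontrivial unitaries (possible since $A$ is infinite-dimensional), producing incompatible proper extensions. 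Separativity — that whenever $p\not\le q$ there is an extension of $p$ incompatible with $q$ — I would verify directly from the ordering in Definition \ref{poset}; if $\bE_{A^\circ}(\bar\varphi,\bar\psi)$ is not separative outright, one passes to its separative quotient, which does not change the generic extension. I anticipate Part \eqref{2.sumup} to be routine once Part \eqref{1.sumup} is in place, with the only subtlety being a clean verification of atomlessness from the freedom granted by Lemma \ref{goodlemma}.
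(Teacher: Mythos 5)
Your overall strategy for part \eqref{1.sumup} is the same as the paper's (Cauchy nets from condition (c), Lemma \ref{density} for genericity, condition (5) in the limit), but the way you assemble $\Phi_\kG$ is wrong, and the verification paragraph is internally inconsistent. With your definitions $\Phi_v=\lim_{p\in\kG}\ad v_p$ and $\Phi_w=\lim_{p\in\kG}\ad w_p^*$, what condition (5) gives in the generic limit is $\bar\varphi\circ\Phi_v=\bar\psi\circ\Phi_w^{-1}$ (the limit of $\ad w_p$, not of $\ad w_p^*$, appears on the right), not the relation $\bar\varphi\circ\Phi_v=\bar\psi\circ\Phi_w$ that you state. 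From the correct relation, right-composing with $\Phi_w$ yields $\bar\varphi\circ(\Phi_v\circ\Phi_w)=\bar\psi$, so the generic automorphism must be $\Phi_\kG:=\Phi_v\circ\Phi_w$ (in the paper's notation, $\Phi_L\circ\Phi_R^{-1}$ with $\Phi_R=\lim\ad w_p$). Your $\Phi_\kG:=\Phi_w^{-1}\circ\Phi_v=(\lim\ad w_p)\circ(\lim\ad v_p)$ composes in the wrong order: since the two limit automorphisms need not commute, there is no reason why $\bar\varphi\circ\Phi_w^{-1}\circ\Phi_v$ should equal $\bar\psi$. (Note also that even granting your stated relation, applying $\Phi_w^{-1}$ on the right would give $\Phi_v\circ\Phi_w^{-1}$, still not your $\Phi_\kG$.) This is a local, fixable slip, but as written the proof does not establish $\bar\varphi\circ\Phi_\kG=\bar\psi$.

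In part \eqref{2.sumup} there is a concrete false step: $\bE_{A^\circ}(\bar\varphi,\bar\psi)$ is \emph{not} a countable poset even when $A^\circ$ is countable, because $\varepsilon_q$ and $\delta_q$ range over all positive reals, so the poset has cardinality $2^{\aleph_0}$. The missing ingredient (which is exactly what the paper does) is to pass to the subset of conditions in which both $\varepsilon_p$ and $\delta_p$ are rational, check that it is dense (this needs a small argument: good pairs survive shrinking $\delta$, and in the construction of Lemma \ref{density} one is free to choose $\varepsilon_p$ rational), and then invoke the fact that forcing with a dense subposet is equivalent, so the countable, atomless poset so obtained is equivalent to Cohen forcing. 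Your attention to atomlessness and separativity is well placed---the standard criterion does require that every condition have incompatible extensions, a point the paper leaves implicit---but without the reduction to a countable dense subposet the argument does not get off the ground.
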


\begin{proof}
\eqref{1.sumup} Let $M$ be a countable transitive model of a large enough fragment of $\ax{ZFC}$ such that $\bE_{A^\circ}(\bar\varphi,\bar\psi)$ is an element of $M$, and let $\kG$ be an $M$-generic filter on $\bE_{A^\circ}(\bar\varphi,\bar\psi)$. By Lemma \ref{density}, for any $F\Subset A$, and for all $\varepsilon>0$, there exists some $q\in\bE_{A^\circ}(\bar\varphi,\bar\psi)$ such that if $p\leq q$ then for all $b\in F$, $\|\ad v_p(b)-\ad v_q(b)\|<\varepsilon$ and $\|\ad v_p^*(b)-\ad v_q^*(b)\|<\varepsilon$. Therefore, the nets $\ad v_p$ and $\ad v_p^*$, for $p\in\kG$, are Cauchy with respect to the point-norm topology in $\aut(A)$. By \cite[Lemma 2.6.3]{Fa:STCstar}, $\Phi_L\in\aut(A)$ defined pointwise as $\Phi_L(a):=\lim_\kG\ad v_p(a)$ is an endomorphism of $A$, and its inverse is given by $\Phi_L^{-1}(a)=\lim_\kG\ad v_p^*(a)$ for each $a\in A$. An analogous argument shows that $\Phi_R(a):=\lim_\kG \ad w_p(a)$ for each $a\in A$ is an approximately inner automorphism of $A$.

Let now $a\in A$ be arbitrary and let $\varepsilon>0$. Again using Lemma \ref{density}, choose $p\in\kG$ such that some $b\in F_p\cap G_p$ satisfies $\|a-b\|<\varepsilon/3$ and that $\max\{\varepsilon_p,\delta_p\}<\varepsilon/3$. Then $|\bar\varphi\circ\Phi_L(a)-\bar\psi\circ\Phi_R(a)|<\varepsilon$. Set $\Phi_\kG:=\Phi_L\circ\Phi_R^{-1}$. Since $\varepsilon$ was arbitrary, $\bar\varphi\circ\Phi_\kG=\bar\psi$.

\eqref{2.sumup} Since $A$ is separable, $A^\circ$ is countable. Also, the conditions such that both $\varepsilon_p$ and $\delta_p$ are rational, comprise a dense subset of $\bE_{A^\circ}(\bar\varphi,\bar\psi)$. This set is countable, and by \cite[Proposition 10.20]{Kana:Book}, $\bE_{A^\circ}(\bar\varphi,\bar\psi)$ is equivalent to the Cohen forcing.
\end{proof}

We should point out that the \cstar-algebra $A$ in Theorem~\ref{sumup}  is not required to be separable. If $A$ is (for example) the \cstar-algebra with pure states of characters~$\aleph_0$ and an uncountable $\kappa$  (such $A$ exists by  \cite[Proposition~2.9.4]{Fa:STCstar}) then the existence of an automorphism as guaranteed by Theorem~\ref{sumup} in a generic extension implies that~$\kappa$ is collapsed to~$\aleph_0$. Thus for some nonseparable \cstar-algebras the forcing $\bE_{A^\circ}(\bar\varphi,\bar\psi)$ may collapse cardinals.

The idea of restricting to a countable dense set in order to assure the countable chain condition of a poset used in the proof of Theorem~\ref{sumup} was first used in the context of operator algebras in \cite{Wof:Set}.

\section{The Unique Extension Property of pure states}\label{S.unique}

The most remarkable property of the generic automorphism introduced by the forcing notion $\bEA{\bar\varphi}{\bar\psi}$ is the genericity of its action on $\pure(A)$ (see Theorem \ref{agap}). 

Let $A$ be a simple, unital, and non-type I \cstar-algebra. We will see later on (see Proposition \ref{newpure}) that after forcing with $\bE_{A^\circ}(\bar\varphi,\bar\psi)$, new equivalence classes of pure states of $A$ will appear. The aim of this section is to study the ground-model pure states of $A$ that have a unique pure state extension to $A\rtimes_{\Phi_\kG}\bZ$ in $M[\kG]$. In order to assure that pure states of $A$ in a prescribed set have unique pure extensions to the crossed product mentioned above, we use the tools introduced in \cite{AkeWe:Consistency} and presented in a gory detail in \cite[\S 5.4]{Fa:STCstar}.

By Theorem 3.1 in \cite{kishimoto1981outer}, if $A$ is simple and $\Phi$ is outer, then $A\rtimes_\Phi\bZ$ is simple as well. By Theorem 2 in \cite{AkeWe:Consistency} (see also \cite[Proposition~5.4.7]{Fa:STCstar}), a pure state $\varphi$ on $A$ has a unique extension to a pure state on $A\rtimes_\Phi\bZ$ if, and only if, $\varphi$ is not equivalent to $\varphi\circ\Phi^n$ for all $n\geq 1$. Since the set of all extensions of $\varphi$ is a face in $\st(A\rtimes_\Phi\bZ)$ (see \cite[Lemma 5.4.1]{Fa:STCstar}), $\varphi$ has a unique extension if, and only if, it has a unique pure state extension. If two pure states $\varphi$ and $\psi$ on $A$ have unique extensions $\tilde\varphi$ and $\tilde\psi$ to $A\rtimes_\Phi\bZ$, then these extensions are equivalent if, and only if, $\varphi$ is equivalent to $\psi\circ\Phi^n$ for some $n\in \bbZ$ (this is the case $\Gamma=\bbZ$ of \cite[Theorem~5.4.8]{Fa:STCstar}).

In the following, we use the notation established in Theorem \ref{sumup}. Note that for a fixed $A$ and $A^\circ$, the conditions in all forcings $\bE_{A^\circ}(\bar\varphi,\bar\psi)$ have the same format and that $\bar\varphi$ and $\bar\psi$ behave as side-conditions. We will now relate these forcing notions.

\begin{defin}\label{pwue}
Let $A$ be a unital \cstar-algebra, $m\geq 1$, and let $\bar\varphi$ and $\bar\psi$ be elements of $\pure_m(A)$. We will say that $\bar\varphi$ is \emph{pointwise unitarily equivalent} to $\bar\psi$, in symbols $\bar\varphi\sim_p\bar\psi$, if there exists a tuple $(u_0,\dots,u_{m-1})\in\U(A)^m$ such that for each $i<m$, we have that $\bar\varphi_i\circ\ad u_i=\bar\psi_i$.
\end{defin}


\begin{lem}\label{changeposet} 
Suppose $A$ is a simple and unital \cstar-algebra, $m\geq 1$, and $\bar\varphi$ and $\bar\psi$ are elements of $\pure_m(A)$. Also suppose that $l\geq 0$, $\bar\rho$ and $\bar\sigma$ belong to $\pure_l(A)$, and moreover $\bar\varphi^\frown\bar\rho$ and $\bar\psi^\frown\bar\sigma$ belong to $\pure_{m+l}(A)$. Let us write $\bbP_0:=\bEA{\bar\varphi}{\bar \psi}$, $\bbP_1:=\bEA{\bar\varphi^\frown\bar \rho}{\bar \psi^\frown \bar \sigma}$, and $\leq_j$ for the ordering on $\bbP_j$ for $j<2$. Then 
\begin{enumerate}
\item\label{1.changeposet} Every condition in $\bbP_1$ is a condition in $\bbP_0$. Moreover, if $p$ and $q$ are in $\bbP_1$ then $p\leq_0 q$ if, and only if, $p\leq_1 q$.  

\item\label{2.changeposet} For every $q\in \bbP_0$ there exists some $\bar\rho'\sim_p\bar\rho$ such that some $p\leq_0 q$ belongs to $\bbP_1':=\bEA{\bar\varphi^\frown\bar\rho'}{\bar\psi^\frown\bar\sigma}$.\footnote{In short, $\bbP_1$ is a subordering of $\bbP_0$ and the union of all $\bbP_1'$ as in \eqref{2.changeposet} is dense in $\bbP_0$. This formulation is dangerously misleading, since $\bbP_1$ is typically not a regular subordering of $\bbP_0$.}  
\end{enumerate}
\end{lem}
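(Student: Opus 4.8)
The plan is to treat the two assertions separately, noting first that the ordering relation $\leq$ of Definition \ref{poset} is defined purely in terms of the coordinates $F,G,\varepsilon,\delta,v,w$ of a condition and makes no reference to the side tuples $\bar\varphi,\bar\psi$ (or $\bar\rho,\bar\sigma$). Consequently, once I show that every condition of $\bbP_1$ is a condition of $\bbP_0$, the clauses (a)--(c) defining $\leq_0$ and $\leq_1$ are literally the same relation on the common conditions, so the ``moreover'' in \eqref{1.changeposet} is immediate. Thus for \eqref{1.changeposet} it remains to verify that a tuple $q=(F_q,G_q,\varepsilon_q,\delta_q,v_q,w_q)$ satisfying the defining conditions (4) and (5) for $\bbP_1$ satisfies them for $\bbP_0$. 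Condition (5) is the easy half: since $(\bar\psi^\frown\bar\sigma)\circ\ad w_q$ lies in $B((\bar\varphi^\frown\bar\rho)\circ\ad v_q,G_q,\delta_q)$ inside $\pure_{m+l}(A)$, discarding the last $l$ coordinates gives $\bar\psi\circ\ad w_q\in B(\bar\varphi\circ\ad v_q,G_q,\delta_q)$ in $\pure_m(A)$, which is (5) for $\bbP_0$.

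The heart of \eqref{1.changeposet} is that a $((\bar\varphi^\frown\bar\rho)\circ\ad v_q,\,F_q\cup\ad v_q^*[F_q],\,\varepsilon_q/3)$-good pair is also a $(\bar\varphi\circ\ad v_q,\,F_q\cup\ad v_q^*[F_q],\,\varepsilon_q/3)$-good pair. Writing $\bar\chi:=\bar\varphi\circ\ad v_q$ and $\bar\zeta:=\bar\rho\circ\ad v_q$, the idea is to project away the $\bar\zeta$-block. Given any extension $\bar\upsilon$ of $\bar\chi$, I would feed $\bar\upsilon$ (appended after $\bar\zeta$) into the goodness of $(G_q,\delta_q)$ for $\bar\chi^\frown\bar\zeta$, obtaining $\bar\upsilon'\sim\bar\upsilon$ for which the $\ad u$-orbit of $\bar\chi^\frown\bar\zeta^\frown\bar\upsilon'$ is weak$^*$-dense in $B(\bar\chi^\frown\bar\zeta^\frown\bar\upsilon',G_q,\delta_q)$. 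Since $(\bar\chi^\frown\bar\zeta^\frown\bar\upsilon')\circ\ad u$ restricts coordinatewise to $(\bar\chi^\frown\bar\upsilon')\circ\ad u$, forgetting the middle $l$ coordinates turns this into density of the orbit of $\bar\chi^\frown\bar\upsilon'$ in $B(\bar\chi^\frown\bar\upsilon',G_q,\delta_q)$. The step I expect to be delicate is the bookkeeping of unitary-equivalence classes: appending $\bar\upsilon$ after $\bar\zeta$, and dually reinserting the $\bar\zeta$-block into a target of the short neighbourhood, is only legitimate for tuples whose coordinates avoid the finitely many classes occurring in $\bar\zeta$. I would dispose of this by a Baire-category argument---each unitary orbit is weak$^*$-meager in $\pure(A)$, so the tuples avoiding the classes of $\bar\zeta$ are comeager and hence weak$^*$-dense in the relevant neighbourhood, and density on a dense subset upgrades to density on the whole open neighbourhood because the orbit closure is closed.

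For \eqref{2.changeposet} the strategy is to run the back-and-forth construction from the proof of Lemma \ref{density}, but inside the longer forcing $\bbP_1'$ and while simultaneously choosing the tails $\bar\rho'$ and $\bar\sigma'$. This is exactly where the full strength of condition (4) enters: because $(G_q,\delta_q)$ is good for $\bar\varphi\circ\ad v_q$ as an $m$-tuple, applying its goodness to the appended extension $\bar\rho\circ\ad v_q$ produces $\bar\rho''\sim\bar\rho\circ\ad v_q$---and hence $\bar\rho':=\bar\rho''\circ\ad v_q^*\sim\bar\rho$---together with weak$^*$-density of the $\ad u$-orbit of $(\bar\varphi^\frown\bar\rho')\circ\ad v_q=(\bar\varphi\circ\ad v_q)^\frown\bar\rho''$. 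Symmetrically, Lemma \ref{goodlemma} applied to $\bar\psi\circ\ad w_q$ and the extension $\bar\sigma\circ\ad w_q$ yields $\bar\sigma'\sim\bar\sigma$ and a good pair for the corresponding long right tuple. With these in hand I would perform the two-sided move of Lemma \ref{density}---first choosing $v$ so that $(\bar\varphi^\frown\bar\rho')\circ\ad(v_qv)$ lands in a prescribed neighbourhood, then enlarging $(G_p,\delta_p)$ to absorb fresh good pairs for the long tuples supplied by Lemma \ref{goodlemma}, and finally choosing $w$ to restore closeness on the enlarged $G_p$---setting $v_p=v_qv$, $w_p=w_qw$, and $F_p\supseteq F_q$. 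This produces $p\leq_0 q$ whose first $m$ coordinates satisfy (4) and (5) exactly as in Lemma \ref{density}.

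The main obstacle in \eqref{2.changeposet}, and the reason the tails cannot be chosen independently, is the last-$l$-coordinate half of condition (5) for $\bbP_1'$: one needs $\bar\sigma'\circ\ad w_p$ weak$^*$-close to $\bar\rho'\circ\ad v_p$ on $G_p$ within $\delta_p$. Since $\bar\rho'\circ\ad v_p$ is forced to lie in the unitary orbit of $\bar\rho$ and $\bar\sigma'\circ\ad w_p$ in that of $\bar\sigma$, and these orbits need not meet, the construction must steer both tails toward a common target $\bar\kappa\in\pure_l(A)$. This is possible precisely because, in a simple, unital, infinite-dimensional \cstar-algebra, the unitary orbit of every tuple of pairwise inequivalent pure states is weak$^*$-dense in $\pure_l(A)$; I would use this density (equivalently, the local density furnished by Lemma \ref{goodlemma}, chained along the back-and-forth) to bring the two orbits within $\delta_p$ of one another before freezing $\bar\rho'$ and $\bar\sigma'$. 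Coordinating this matching with the enlargement of $(G_p,\delta_p)$---so that the tails are made close relative to the final, finer pair rather than a provisional one---is the part that requires the most care, and it is resolved by fixing the good pairs for the long tuples first and only afterwards carrying out the matching of the tails.
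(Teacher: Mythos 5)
Your proposal is correct and follows essentially the same route as the paper's proof: part \eqref{1.changeposet} rests on the observation that clauses (a)--(c) and (1)--(3) ignore the side tuples while (4) and (5) only weaken under restriction to sub-tuples (the paper dismisses this in one sentence; you supply the projection/insertion details), and part \eqref{2.changeposet} uses exactly the paper's three ingredients---the full strength of condition (4) to choose $\bar\rho'$ together with orbit density of the long left tuple, global weak$^*$-density of the unitary orbit of $\bar\sigma$ (Glimm's lemma, in the paper) to match the tail $\bar\sigma'$, and a rerun of the Lemma \ref{density} back-and-forth to assemble $p\leq_0 q$ with the long versions of (4) and (5). The only differences are cosmetic: the paper matches $\bar\sigma'$ \emph{before} running the back-and-forth, so the ``provisional versus final pair'' coordination you worry about is handled automatically by the final $w$-move, and your Baire-category/meagerness detour in part \eqref{1.changeposet} can be replaced by the simpler fact that each of the continuum many unitary equivalence classes is weak$^*$-dense, so pure states avoiding finitely many prescribed classes can be found in any nonempty weak$^*$-open set.
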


\begin{proof}
To see that the first part of \eqref{1.changeposet} holds, fix $q\in \bbP_1$. Conditions (1)--(3) of Definition~\ref{poset} do not depend on the tuples of pure states, while (4) and (5) are weakened as one passes to sub-tuples of pure states. The second part of \eqref{1.changeposet} follows because conditions (a)--(c) of Definition~\ref{poset}  do not refer to the pure states.   
 
\eqref{2.changeposet} Fix $q\in\bbP_0$. We will assume that each of  $\bar\rho$ and $\bar\sigma$ consists of a single pure state, $\rho$ and $\sigma$ respectively.  Once this is proved, the general case will follow by induction. By Lemma \ref{goodlemma}, there exists a $((\bar\psi^\frown\sigma)\circ\ad w_q,F_q\cup\ad w_q^*[F_q],\varepsilon_q)$-good pair; denote it $(G_1,\delta_1)$. Since $q$ is a condition, $(G_q,\delta_q)$ is $(\bar\varphi\circ\ad v_q,F_q\cup\ad v_q^*[F_q],\varepsilon_q)$-good, and
\begin{equation*}
    \bar\varphi\circ\ad v_q\approx_{G_q,\delta_q}\bar\psi\circ\ad w_q.
\end{equation*}
Using the goodness of $(G_q,\delta_q)$, choose $v\in\U(A)\cap A^\circ$ to be some unitary such that $\bar\varphi\circ\ad v_qv\approx_{G_1,\delta_1}\bar\psi\circ\ad w_q$ and $\|b-\ad v(b)\|<\varepsilon_q/3$ for all $b\in F_q\cup\ad v_q^*[F_q]$. Set $\varepsilon_p:=\varepsilon_q/3$, $F_p:=F_q$, and $v_p:=v_qv$. Apply now \cite[Proposition 5.2.9]{Fa:STCstar} to find some unitary $u\in\U(A)\cap A^\circ$ such that if $\rho':=\rho\circ\ad u$, then
\begin{equation*}
    (\bar\varphi^\frown\rho')\circ\ad v_p\approx_{G_1,\delta_1}(\bar\psi^\frown\sigma)\circ\ad w_q.  
\end{equation*}
By Lemma \ref{goodlemma}, there is a $((\bar\varphi^\frown\rho')\circ\ad v_p,F_p\cup\ad v_p^*[F_p],\varepsilon_p)$-good pair, denoted $(G_2,\delta_2)$. Also, let $G_p:=G_q\cup G_2$ and $\delta_p:=\min\{\delta_q,\delta_2\}$.

Using now the fact that the pair $(G_1,\delta_1)$ is a good pair, let $w\in\U(A)\cap A^\circ$ be such that $(\bar\varphi^\frown\rho')\circ\ad v_p\approx_{G_p,\delta_p}(\bar\psi^\frown\sigma)\circ\ad w_qw$ and $\|b-\ad w(b)\|<\varepsilon_q/3$ for every $b\in F_q\cup\ad w_q^*[F_q]$. Define $w_p$ as $w_qw$ and set $p$ to be $(F_p,G_p,\varepsilon_p,\delta_p,v_p,w_p)$.

Since the pair $(G_p,\delta_p)$ is $((\bar\varphi^\frown\rho')\circ\ad v_p,F_p\cup\ad v_p^*[F_p],\varepsilon_p)$-good, $p\in\bP_1$. Also, note that if $b\in F_q$ then
\begin{align*}
\|\ad v_q^*(b)-\ad v_p^*(b)\|&=\|\ad v(\ad v_q^*(b))-\ad v_q^*(b)\|\\
&<\varepsilon_q/3<\varepsilon_q-\varepsilon_q/3\leq\varepsilon_q-\varepsilon_p.    
\end{align*}
Also, $\|\ad v_q(b)-\ad v_p(b)\|=\|b-\ad v(b)\|<\varepsilon_q-\varepsilon_p$. The calculations for $w_p$ and $w_q$ are analogous and therefore $p\leq_0 q$.
\end{proof}

\begin{rem}\label{R.reals}
We will follow an abuse of terminology common  in set theory and refer to `reals' as  elements of any uncountable Polish space fixed in advance that is definable in the sense that it belongs to every model of (a sufficiently large fragment of) \ZFC. This is justified by a classical result of Kuratowski, asserting that any two uncountable Polish spaces are Borel-isomorphic. Using properties of the standard coding for Borel sets (see e.g., \cite[p. 504]{Jech:SetTheory}) one sees that the property of being a Borel-isomorphism between two Polish spaces is absolute between transitive models of \ZFCmP. This has as a consequence the fact that a forcing notion adds a new element to $\bbR$ if, and only if, it adds a new element to some (every) uncountable Polish space in the ground model. Thus the phrase `$\bbP$ does not add new reals' is unambiguous, even with the gratuitous use of the phrase `the reals'. 
\end{rem}

A forcing notion $\bP$ adds a new real to the model $M$ if, and only if, it adds a new element to some (equivalently, every) non-trivial \cstar-algebra. Because of this, in a forcing extension $M[\kG]$ we identify $A$ with its completion, and pure states of $A$ with their unique continuous extensions to the completion of $A$. As common in set theory, by $A^M$ we denote the original \cstar-algebra $A$ in $M$ and by $A^{M[\kG]}$ we denote its completion in $M[\kG]$ (see Appendix \ref{S.generic}). Note that $A^M$ is an element of $M[\kG]$ which is an algebra over the field $\bbC^M$. In the extension, the latter is a proper subfield of $\bbC^{M[\kG]}$, hence the former is not a complex algebra, hence not a \cstar-algebra. It is however dense in $A^{M[\kG]}$, which suffices for our purposes. 

\begin{theo}\label{agap}
Suppose $A$ is a simple, unital, separable, and non-type I \cstar-algebra, and let $\bar\varphi$ and $\bar\psi$ be elements of $\pure_m(A)$. If $\rho$ is a pure state on $A$, then some condition in $\bE_{A^\circ}(\bar\varphi,\bar\psi)$ forces that $\rho\circ\Phi^n_\kG$ is equivalent to a ground-model pure state $\sigma$ for some $n\geq 1$ if, and only if, $n=1$ and there exists $i<m$ such that $\rho\sim\varphi_i$ and $\sigma\sim\psi_i$. 
\end{theo}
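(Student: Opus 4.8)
The statement is an equivalence whose two directions are of very different character: the reverse implication is a soft consequence of Theorem~\ref{sumup}, while the forward implication is the genuine genericity statement and carries all the weight.

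\emph{The ``if'' direction.} Suppose $\rho\sim\varphi_i$ and $\sigma\sim\psi_i$ for some $i<m$; I claim that every condition forces $\rho\circ\Phi_\kG\sim\sigma$, so the witness $n=1$ is available. Writing $\rho=\varphi_i\circ\Ad u$ for a unitary $u$ in the unitization of $A$, and using that unitary equivalence of pure states is invariant under right composition with an automorphism (if $\eta=\zeta\circ\Ad u$ then $\eta\circ\Phi=\zeta\circ\Phi\circ\Ad(\Phi^{-1}(u))\sim\zeta\circ\Phi$), I obtain
\[
\rho\circ\Phi_\kG=\varphi_i\circ\Ad u\circ\Phi_\kG\sim\varphi_i\circ\Phi_\kG=\psi_i\sim\sigma,
\]
the middle equality being $\bar\varphi\circ\Phi_\kG=\bar\psi$ from Theorem~\ref{sumup}\eqref{1.sumup}. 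As this holds in every generic extension, every condition forces it.

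\emph{The ``only if'' direction.} I would argue by contraposition, showing that if $\rho\not\sim\varphi_i$ for every $i<m$ then for all $n\geq1$ and every ground-model pure state $\sigma$, no condition forces $\rho\circ\Phi_\kG^n\sim\sigma$; the substantive content is thus that a forced landing pins $\rho$ to some $\varphi_i$, after which the ``if'' computation and the absoluteness of $\sim$ for ground-model states exhibit the canonical witness $n=1$, $\sigma\sim\psi_i$. Fix a condition $q\in\bbP_0:=\bE_{A^\circ}(\bar\varphi,\bar\psi)$; the aim is to drive $\rho\circ\Phi_\kG^n$ into a ground-model equivalence class disjoint from that of $\sigma$. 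Since $A$ is non-type I and infinite-dimensional it carries $2^{\aleph_0}$ pairwise inequivalent pure states, so I can fix pairwise inequivalent pure states $b_1,\dots,b_n$, each inequivalent to $\rho$, to $\sigma$, and to all the $\varphi_i$ and $\psi_i$. Appending $(\rho,b_1,\dots,b_{n-1})$ to $\bar\varphi$ and $(b_1,\dots,b_n)$ to $\bar\psi$ produces a ``chain'', and the choices guarantee both appended tuples lie in $\pure_{m+n}(A)$. Applying Lemma~\ref{changeposet}\eqref{2.changeposet} with $l=n$ to $q$ yields representatives of these classes and a condition $p\leq_0 q$ belonging to the larger poset $\bbP_1'=\bE_{A^\circ}(\bar\varphi^\frown\bar\rho',\bar\psi^\frown\bar\sigma')$. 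In a $\bbP_1'$-generic extension, Theorem~\ref{sumup}\eqref{1.sumup} forces the entire chain, $\rho'\circ\Phi_\kG\sim b_1$, $b_1\circ\Phi_\kG\sim b_2,\dots$, so that $\rho\circ\Phi_\kG^n\sim b_n\not\sim\sigma$.

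\emph{The main obstacle.} The difficulty, which I expect to be the heart of the proof, is that the chain is forced by \emph{$\bbP_1'$}, whereas the theorem concerns forcing with \emph{$\bbP_0$}, and—as the remark following Lemma~\ref{changeposet} warns—$\bbP_1'$ is not a regular subordering of $\bbP_0$. This cannot be finessed by weak$^*$ approximation: below $p$ the clauses (c) and (5) of Definition~\ref{poset} control $\rho\circ\Phi_\kG^n$ only to within a fixed error on a finite set, and in a non-type I algebra every unitary-equivalence class of pure states is weak$^*$-dense (a consequence of Lemma~\ref{goodlemma}\eqref{1.good}), so finite weak$^*$ proximity to $b_n$ reveals nothing about the equivalence class of $\rho\circ\Phi_\kG^n$. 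The exact relations are therefore indispensable, and the crux is a transfer principle certifying that the $\bbP_1'$-generic automorphism realizing the chain can be seen as arising from a $\bbP_0$-generic filter through $q$, so that $q$ fails to force $\rho\circ\Phi_\kG^n\sim\sigma$. The plan is to establish this through the intermediate-model relationship $M\subseteq M[\kG_1]$ and the explicit description of $\Phi_\kG$ as a composition of point-norm limits of inner automorphisms, confronting the non-regularity directly rather than by naive dovetailing (which is blocked precisely because a $\bbP_0$-extension of a condition in $\bbP_1'$ need not return to $\bbP_1'$). In the principal application $A$ is separable, where by Theorem~\ref{sumup}\eqref{2.sumup} both $\bbP_0$ and $\bbP_1'$ are Cohen forcing and the two generics admit amalgamation; I expect the general, non-separable case to be where the delicate forcing-theoretic bookkeeping concentrates.
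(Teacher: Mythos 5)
Your ``if'' direction is the paper's (it is exactly the conclusion of Theorem~\ref{sumup}\eqref{1.sumup}, plus invariance of $\sim$ under right composition with an automorphism), and your combinatorial setup for the hard direction---the shifted tuples $(\rho,b_1,\dots,b_{n-1})$ and $(b_1,\dots,b_n)$ with the terminal state inequivalent to $\sigma$, fed into Lemma~\ref{changeposet}\eqref{2.changeposet} to produce $p\leq_0 q$ lying in $\bbP_1'$---is also exactly the paper's. But your proposal stops precisely where the theorem begins: everything after ``The main obstacle'' is a declaration of intent, not an argument. You never produce, from the $\bbP_1'$-side information, anything that contradicts what $q$ forces in $\bbP_0$; ``a transfer principle through the intermediate-model relationship'' and ``the two generics admit amalgamation'' are not carried out, and the latter is not available in any straightforward sense, since the forcing-equivalence of $\bbP_0$ and $\bbP_1'$ with Cohen forcing does not respect the inclusion $\bbP_1'\subseteq\bbP_0$, which is the whole problem. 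This is a genuine gap, located at the heart of the proof.

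What is missing is the paper's key maneuver, and it is precisely the one your obstacle paragraph argues cannot work. You are right that finite weak$^*$ data cannot determine the equivalence class of $\rho\circ\Phi^n_\kG$; but the paper never attempts that. It first \emph{upgrades the hypothesis}: if $q$ forces $\rho\circ\Phi^n_\kG\sim\sigma$, then, since equivalence of pure states is witnessed by $\|\varphi\circ\ad u-\psi\|<2$ for a unitary $u$ and since $\U(A)\cap A^\circ$ is dense in $\U(A)$ of the extension (Lemma~\ref{L.absolute}), an extension of $q$ forces the \emph{norm} estimate $\|\rho\circ\Phi^n_\kG-\sigma\circ\ad u\|<1/2$ for a single ground-model $u\in\U(A)\cap A^\circ$. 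A norm estimate, unlike an equivalence, is refuted by finitely much data: one $a\in A^M_{\leq 1}$ and precision $<1$. The $\bbP_1'$-generic extension $M[\kH]$ with $\kH\ni p$ is then used only as a search device: in it one finds $a\in A^M_{\leq 1}$ with $|(\eta'_{n-1}\circ\ad v_\kH)(a)-(\sigma\circ\ad u)(a)|\geq 3/2$ (possible because the chain sends $\rho\circ\Phi^n_\kH$ into the class of $\eta'_{n-1}$, which avoids $\sigma$), together with a condition $r\in\kH$, $r\leq_1 p$, with $\delta_r<1/6$ and some $b\in G_r$ within $1/6$ of $\ad v_\kH(a)$. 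By Lemma~\ref{changeposet}\eqref{1.changeposet}, $r$ is a $\bbP_0$-condition extending $q$, and $r$ forces $|(\rho\circ\Phi^n_\kG)(a)-(\eta'_{n-1}\circ\ad v_\kH)(a)|<1/2$ \emph{in either poset}: this estimate follows from clauses (4)--(5) of Definition~\ref{poset} for $r$ \emph{as an element of} $\bbP_1'$ (a ground-model property of the condition $r$) combined with clause (c) of the ordering, which is literally the same relation in both posets. The triangle inequality then contradicts what $q$ forces in $\bbP_0$. Thus the non-regularity of $\bbP_1'$ in $\bbP_0$ is never ``confronted'' and no amalgamation of generic filters is needed: the transfer happens at the level of a single condition and finitely many numerical estimates. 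This quantitative reduction is the actual content of the proof, and it is absent from your proposal.
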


\begin{proof}[Proof of Theorem~\ref{agap}]
The converse implication is the conclusion of Theorem \ref{sumup}.

The direct implication uses the well-known fact that pure states $\eta$ and $\sigma$ on $A$ that are inequivalent in  a model of a large enough fragment of \ZFC{} remain inequivalent in every forcing extension (and even in every larger transitive model  of a large enough fragment of \ZFC{}). A proof of this fact is included below.

Suppose that the direct implication is false for some $n$, $\rho$, and $\sigma$. Choose the minimal such $n$. Note that if $\rho\sim \varphi_i$ for some $i$  then necessarily $\sigma\sim \psi_i$, and similarly if $\sigma\sim \psi_i$ then $\rho\sim \varphi_i$. 
Since $A$ is separable, by Glimm's theorem (\cite[Corollary~5.5.5]{Fa:STCstar}) it has $2^{\aleph_0}$ inequivalent pure states, if $n>1$ then  we can choose mutually inequivalent pure states $\rho_1,\dots, \rho_{n-1}$ each of which is inequivalent to all $\varphi_i$, $\psi_i$, $\rho$, and $\sigma$.   Therefore, by our assumption, (writing $\rho_0:=\rho$ and $\bar\rho:=(\rho_0,\rho_1,\dots,\rho_{n-1})$) both $\bar\varphi^\frown\bar\rho$ and $\psi^\frown(\rho_1,\dots, \rho_{n-1}, \sigma)$ belong to  $\pure_{m+n}(A)$. 

 Fix $q\in \bE_{A^\circ}(\bar\varphi,\bar\psi)$ which forces that $\rho\circ\Phi^n_\kG\sim\sigma$. By the minimality of $n$, we can extend $q$ so that it forces $\rho\circ \Phi^j_\kG\not\sim\sigma$ for all $1\leq j<n$. Since the ground-model $\U(A)$ is dense in $\U(A)$ of the generic extension, by further extending $q$, we may assume that there exists $u\in\U(A)\cap A^\circ$ such that $q\Vdash\|\rho\circ\Phi^n_\kG-\sigma\circ\ad u\|<1/2$. Using the abundance of inequivalent pure states of $A$ again, let   $\eta\nsim\sigma$ be such that if $\bar\eta:=(\rho_1,\dots,\rho_{n-1},\eta)$, then  $\bar\psi^\frown\bar\eta$ belongs to  $\pure_{m+n}(A)$. By Lemma \ref{changeposet} there exists $\bar\rho'=(\rho'_0, \rho'_1,\dots \rho'_{n-1})$ in $\pure_n(A)$ such that $\bar\rho'\sim_p\bar\rho$ and  some $p\in \bE_{A^\circ}(\bar\varphi^\frown\bar\rho',\bar\psi^\frown\bar\eta)$ extends $q$.

Let $\kH$ be an $M$-generic filter on $\bE_{A^\circ}(\bar\varphi^\frown\bar\rho',\bar\psi^\frown\bar\eta)$ containing $p$. By Theorem~\ref{sumup}, in $M[\kH]$ we have that $\bar\rho'\circ\Phi_\kH=\bar\eta$. Let $v\in\U(A)$ be  such that $\rho=\rho'_0\circ\ad v$ and set $v_\kH:=\Phi_\kH^{-n}(v)$. Since $\ad v\circ\Phi_\kH^n=\Phi_\kH^n\circ\ad v_\kH$, we have 
\begin{equation*}
    \rho\circ\Phi_\kH^n=\rho'_0\circ\ad v\circ\Phi_\kH^n=\rho'_0\circ\Phi_\kH^n\circ\ad v_\kH.
\end{equation*}
Since $\sigma$ is not equivalent to $\eta\circ\ad v_\kH$, by  \cite[Proposition~3.8.1]{Fa:STCstar} some  $a\in A^M_{\leq 1}$ satisfies 
\begin{equation*}
    \left|\left(\eta\circ\ad v_\kH\right)(a)-\left(\sigma\circ\ad u\right)(a)\right|\geq 2.
\end{equation*}
Since $A^M$ is norm-dense in $A^{M[\kH]}$, let $r\in\bE_{A^\circ}(\bar\varphi^\frown\bar\rho',\bar\psi^\frown\bar\eta)$, with $r\leq p$, be such that some $b\in G_r$ satisfies  $\|b-\ad v_\kH(a)\|<1/6$, and $\delta_r<1/6$. The (easy) first part of Lemma \ref{changeposet} implies that $r\in\bE_{A^\circ}(\bar\varphi,\bar\psi)$ and that $r$ extends $q$ as an element of $\bE_{A^\circ}(\bar\varphi,\bar\psi)$. However, $r$ forces (in either of the posets) that
\begin{align*}
    \left|\left(\rho\circ\Phi_\kH^n\right)(a)-\left(\eta\circ\ad v_\kH\right)(a)\right|&=\left|(\rho'_0\circ\Phi_\kH^n)(\ad v_\kH(a))-\eta(\ad v_\kH(a))\right|\\
    &\leq\left|(\rho'_0\circ\Phi_\kH^n)(b)-\eta(b)\right|+2\|b-\ad v_\kH(a)\|<1/2. 
\end{align*}
By \cite[Proposition~3.8.1]{Fa:STCstar}, $\rho\circ\Phi_\kH^n$ is equivalent to $\eta$, and hence $\eta$ and $\sigma$ are equivalent in the extension. But $\eta$ and $\sigma$ are inequivalent ground model pure states. Since $\U(A)^M$ is dense in $\U(A)^{M[\kH]}$, we can choose $v\in\U(A)^M$ such that $\|\eta\circ \Ad v-\sigma\|<2$. By using  
\cite[Proposition~3.8.1]{Fa:STCstar} again we conclude that $\eta$ and $\sigma$ are equivalent in~$M$; contradiction.   
\end{proof}

In the following corollary there is no need to explicitly refer to the ground model. 

\begin{cor}\label{uep}
Suppose that $A$ is a simple, unital, and non-type I \cstar-algebra. Fix elements $\bar\varphi$ and $\bar\psi$ of $\pure_m(A)$, and fix $\rho$ and $\sigma$ in $\pure(A)$. If $\kG$ is a generic filter in $\bEA{\bar\varphi}{\bar\psi}$, then the following statements hold in the forcing extension: 
\begin{enumerate}
\item $\rho$ has multiple pure state extensions to $A\rtimes_{\Phi_\kG}\bZ$ if, and only if, there exists $i<m$ such that $\rho\sim\varphi_i\sim\psi_i$. 
\item If $\rho$ and $\sigma$ are inequivalent and they both have unique pure state extensions to $A\rtimes_{\Phi_\kG} \bZ$ then these extensions are equivalent if, and only if, some $i<m$ satisfies $\rho\sim\varphi_i$ and $\sigma\sim\psi_i$ or $\rho\sim\psi_i$ and $\sigma\sim\varphi_i$.   
\end{enumerate}
\end{cor}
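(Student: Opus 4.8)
The plan is to derive Corollary~\ref{uep} directly from the results already established in \S\ref{S.unique}, treating it as a translation of Theorem~\ref{agap} into the language of pure state extensions to the crossed product, rather than as a genuinely new argument. The preamble of the section records three facts from \cite{Fa:STCstar} and \cite{AkeWe:Consistency} which I will invoke: (i) a pure state $\rho$ on $A$ has a \emph{unique} pure state extension to $A\rtimes_{\Phi}\bZ$ if and only if $\rho\not\sim\rho\circ\Phi^n$ for all $n\geq 1$; equivalently, $\rho$ has multiple extensions precisely when $\rho\sim\rho\circ\Phi^n$ for some $n\geq 1$; and (ii) when $\rho$ and $\sigma$ both have unique extensions $\tilde\rho,\tilde\sigma$, one has $\tilde\rho\sim\tilde\sigma$ if and only if $\rho\sim\sigma\circ\Phi^n$ for some $n\in\bbZ$. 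Everything reduces to analyzing the relations $\rho\sim\rho\circ\Phi_\kG^n$ and $\rho\sim\sigma\circ\Phi_\kG^n$ in $M[\kG]$, and this is exactly what Theorem~\ref{agap} controls.

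For part (1), first I would apply fact (i) with $\Phi=\Phi_\kG$: $\rho$ has multiple pure state extensions iff $\rho\circ\Phi_\kG^n\sim\rho$ for some $n\geq 1$. Since $\rho$ is itself a ground-model pure state, this is an instance of the situation addressed by Theorem~\ref{agap} with $\sigma:=\rho$. That theorem (applied over a suitable countable transitive model, as in its proof, and then absorbed into the statement by the remark preceding the corollary) says that \emph{some} condition forces $\rho\circ\Phi_\kG^n\sim\rho$ for some $n\geq 1$ exactly when $n=1$ and there is $i<m$ with $\rho\sim\varphi_i$ and $\rho\sim\psi_i$, i.e.\ $\rho\sim\varphi_i\sim\psi_i$. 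The one wrinkle is moving from ``some condition forces'' to a clean ``holds in the forcing extension'': I would note that the conclusion $\rho\sim\varphi_i\sim\psi_i$ is a ground-model statement about $A$, so if it holds the equivalence is realized by a ground-model unitary and is therefore forced by the weakest condition and holds in every extension; conversely, if it fails for all $i<m$, then Theorem~\ref{agap} shows no condition forces the equivalence, and since whether $\rho$ has a unique extension is decided by the generic $\Phi_\kG$ and Theorem~\ref{sumup}(1) delivers $\bar\varphi\circ\Phi_\kG=\bar\psi$, the relation $\rho\sim\rho\circ\Phi_\kG^n$ simply fails in $M[\kG]$. This gives the stated biconditional.

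For part (2), assuming both $\rho$ and $\sigma$ have unique extensions, I would invoke fact (ii): $\tilde\rho\sim\tilde\sigma$ iff $\rho\sim\sigma\circ\Phi_\kG^n$ for some $n\in\bbZ$. I would reduce the two-sided $n\in\bbZ$ to $n\geq 1$ by symmetry (replacing $\rho,\sigma$ with $\sigma,\rho$ and $n$ with $-n$, using $\Phi_\kG^{-n}$), handling the $n=0$ case separately: $\rho\sim\sigma$ would force, via Theorem~\ref{sumup}(1) and the genericity, that $\rho$ and $\sigma$ share the same extension-multiplicity behaviour, but the cleanest route is to feed the pair into Theorem~\ref{agap} directly. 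Applying Theorem~\ref{agap} with the given $\rho$ and $\sigma$, a condition forces $\rho\circ\Phi_\kG^n\sim\sigma$ for some $n\geq 1$ iff $n=1$ and there is $i<m$ with $\rho\sim\varphi_i$ and $\sigma\sim\psi_i$; by the symmetric application one gets the companion case $\sigma\sim\varphi_i$ and $\rho\sim\psi_i$. Combining, $\tilde\rho\sim\tilde\sigma$ holds iff some $i<m$ satisfies $\{\rho,\sigma\}=\{\varphi_i,\psi_i\}$, which is the assertion. The genericity of $\kG$ again converts ``forced by some condition'' into ``true in $M[\kG]$'' by the same ground-model-absoluteness observation as in part (1).

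The main obstacle I anticipate is the bookkeeping around the $n=0$ and the two orderings of the pair in part~(2): Theorem~\ref{agap} is stated asymmetrically (it fixes $\rho\circ\Phi_\kG^n\sim\sigma$ with $\rho\sim\varphi_i$, $\sigma\sim\psi_i$), so I must be careful to run it in both directions and to rule out cross-identifications such as $\rho\sim\varphi_i$ with $\rho\sim\psi_i$ but $\sigma$ matching a \emph{different} index, which the pairwise-inequivalence hypothesis $\bar\varphi,\bar\psi\in\pure_m(A)$ forbids. A secondary, purely expository point is justifying the passage from the model-relative phrasing of Theorem~\ref{agap} to the model-free phrasing announced in the sentence preceding the corollary; I would dispatch this by remarking that all the relevant equivalences of ground-model pure states are witnessed by ground-model unitaries and hence are decided uniformly across all generic extensions, so no genuine reference to $M$ survives in the statement.
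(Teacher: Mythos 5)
Your proposal is correct and follows essentially the same route as the paper: both parts are reduced, via the Akemann--Weaver unique-extension criterion (\cite[Proposition~5.4.7]{Fa:STCstar} for (1) and \cite[Theorem~5.4.8]{Fa:STCstar} for (2)), to the orbit relations $\rho\sim\rho\circ\Phi_\kG^n$ and $\rho\sim\sigma\circ\Phi_\kG^n$, which are then settled by Theorems~\ref{sumup} and~\ref{agap}. The paper's proof is simply a terser version of yours, leaving implicit the passage from ``some condition forces'' to ``holds in the extension'' and the symmetry bookkeeping in $n$ that you spell out.
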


\begin{proof}
By a result from \cite{AkeWe:Consistency} (or see \cite[Proposition~5.4.7]{Fa:STCstar}), a pure state $\zeta$ has a unique pure state extension to the crossed product if, and only if, $\zeta\nsim\zeta\circ \Phi_\kG^n$ for all $n\neq0$. By Theorem \ref{sumup} and Theorem \ref{agap}, this happens if, and only if, $\rho\sim\varphi_i\sim\psi_i$ for some $i<m$. This proves (1). 

In (2) only the direct implication requires a proof. It uses the case when $\Gamma=\bbZ$ of  \cite[Theorem~5.4.8]{Fa:STCstar}. This theorem asserts that  with an action $\alpha$ of a discrete group $\Gamma$ on $A$, two pure states $\rho$ and $\sigma$ on $A$ have equivalent extensions to the reduced crossed product  if and only if  some  $g$ in $\Gamma$ satisfies $\rho \circ \alpha_g \sim \sigma$.  Therefore if $\rho$ and $\sigma$ have unique and equivalent extensions to the reduced crossed product, then $\bEA{\bar\varphi}{\bar\psi}$,  forces that $\rho\sim \sigma\circ \Phi^n$ for some $n\in \bbZ$. Since $\rho$ and $\sigma$ are inequivalent, they remain inequivalent in the forcing extension and therefore $n\neq 0$. By Theorem~\ref{agap}, if $n>0$ then $n=1$, $\rho\sim\varphi_i$, and $\sigma\sim\psi_i$ for some $i$. By the same theorem, if $n<0$ then $n=-1$, $\rho\sim\psi_i$, and $\sigma\sim\varphi_i$. This exhausts the possibilities and concludes the proof.  
\end{proof}

\section{A proof of Theorem \ref{main} from $\diacoh$}

In this section, we introduce our weakening of Jensen's $\diamondsuit_{\aleph_1}$ principle, $\diacoh$, and use it to construct the \cstar-algebra as required in Theorem \ref{main}.  

A subset of $\aleph_1$ is \emph{closed and unbounded} (\emph{club}) if it is unbounded and contains the supremum of each of its bounded subsets.\footnote{In other words, it is unbounded and closed in the ordinal topology.} A subset of $\aleph_1$ is \emph{stationary} if it intersects every club non-trivially. 

Let us first recall Jensen's $\diamondsuit_{\aleph_1}$ principle.

\begin{defin} A \emph{$\diamondsuit_{\aleph_1}$ sequence} is an indexed  family of sets $S_\alpha\subseteq \alpha$, for $\alpha<\aleph_1$, such that for every $X\subseteq \aleph_1$  the set $\{\alpha\mid X\cap \alpha=S_\alpha\}$ is stationary. The $\diamondsuit_{\aleph_1}$ principle asserts that a $\diamondsuit_{\aleph_1}$ sequence exists. 
\end{defin} 

It is well known that $\diamondsuit_{\aleph_1}$ is relatively consistent with $\ax{ZFC}$, that it implies $\ax{CH}$, and that it is not a consequence of $\ax{CH}$ (see \cite{Ku:Set}).

\begin{defin}\label{Def.diacoh} 
A chain $(M_\alpha:\alpha<\aleph_1)$ is a \emph{$\diacoh$-chain} if: 
\begin{enumerate}[$\diacoh$(a)]
    \item Each $M_\alpha$ is a (not necessarily countable) transitive model of \ZFCmP.
    \item\label{2.diacoh} For every $X\subseteq\aleph_1$, the set $\{\alpha<\aleph_1:X\cap\alpha\in M_\alpha\}$ is stationary. 
    \item\label{3.diacoh} For every $\alpha<\aleph_1$, some real in $M_{\alpha+1}$ is Cohen-generic over $M_\alpha$.
\end{enumerate}

We say that the principle \emph{$\diacoh$ holds} if there exists a $\diacoh$-chain.
\end{defin}

Clearly, $\diamondsuit_{\aleph_1}$ implies $\diacoh$. (To see this, fix a $\diamondsuit_{\aleph_1}$ sequence $S_\alpha$, for $\alpha<\aleph_1$, and choose an increasing chain of countable elementary submodels $(N_\alpha)$ of $H(\aleph_2)$ such that $S_\alpha\in N_\alpha$ for all $\alpha$, and assure that $N_{\alpha+1}$ contains a real which is Cohen over~$N_\alpha$; this is possible because $N_\alpha$ is countable.  Take $M_\alpha$ to be the transitive   collapse of $N_\alpha$.)  For a partial converse, note that if each model $M_\alpha$ in a $\diacoh$-chain is countable, or if its intersection with $2^\alpha$ is countable, then $\diamondsuit_{\aleph_1}$ holds. This is a consequence of \cite[Theorem III.7.8]{Ku:Set}. We will discuss the relative consistency of $\diacoh$ in Appendix \ref{S.diamond}.

Let us now discuss the relevance of the condition $\diacoh$\eqref{2.diacoh}. 

First, observe that it implies that $\bigcup_{\alpha<\aleph_1}M_\alpha$ contains all subsets of $\omega$, and therefore all elements of any Polish space (affectionately known as `reals'---see Remark~\ref{R.reals} for both the terminology and the proof). 

Second, this condition applies when $X$ is replaced with any object of cardinality~$\aleph_1$ or with a complete metric space of density character~$\aleph_1$. More specifically, every \cstar-algebra of density character $\kappa$ can be coded by a subset of $\kappa$ (see the introduction to \S3 in \cite{farah2016simple}, or \cite[\S7.1-2]{Fa:STCstar}). Similarly, if $\bar\psi$ is a tuple of states of a \cstar-algebra $B$ of density character $\kappa$, then the structure $(B,\bar\psi)$ can be coded by a subset of $\kappa$. Moreover, the version of L\"owenheim--Skolem theorem for logic of metric structures stated in \cite[Theorem 7.1.4]{Fa:STCstar} implies the following. 

\begin{lem}\label{L.LST}
Suppose that $\kappa$ is a regular and uncountable cardinal, $A=\varinjlim_{\alpha<\kappa} A_\alpha$ is a \cstar-algebra such that the density character of each $A_\alpha$ is strictly smaller than $\kappa$, $A_\beta=\varinjlim_{\alpha<\beta} A_\alpha$ for every limit ordinal $\beta$, $\bar\varphi$ is a tuple of states of $A$, and $X\subseteq\kappa$ is a code for the structure $(A,\bar\varphi)$. Then the set
\begin{equation*}
    \left\{\alpha<\kappa:X\cap\alpha\text{ is a code for }(A_\alpha,\bar\varphi\rs A_\alpha)\right\} 
\end{equation*}
includes a club.\qed
\end{lem}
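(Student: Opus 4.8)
The plan is to derive Lemma \ref{L.LST} directly from the metric Löwenheim--Skolem theorem \cite[Theorem 7.1.4]{Fa:STCstar}, exploiting the fact that a continuous chain $A=\varinjlim_{\alpha<\kappa}A_\alpha$ with each factor of density character $<\kappa$ gives a natural filtration of the structure $(A,\bar\varphi)$ whose approximating substructures are exactly the $(A_\alpha,\bar\varphi\rs A_\alpha)$. First I would unwind what it means for $X\subseteq\kappa$ to be a code for $(A,\bar\varphi)$ in the sense of \cite[\S7.1-2]{Fa:STCstar}: $X$ records, via a fixed Borel-type enumeration, the diagram of a dense $\bQ+i\bQ$-subalgebra together with the values of $\bar\varphi$ on that subalgebra. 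The key observation is that the coding is set up so that an \emph{initial segment} $X\cap\alpha$ of the code corresponds to the substructure generated by the first $\alpha$ many generators in the enumeration, provided the enumeration is chosen compatibly with the filtration $(A_\alpha)_{\alpha<\kappa}$.

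The main steps, in order, are as follows. First I would fix an enumeration of a dense subset of $A$ that respects the filtration, so that for a club of $\alpha$ the elements indexed below $\alpha$ are exactly a dense subset of $A_\alpha$; this uses that the chain is continuous at limits ($A_\beta=\varinjlim_{\alpha<\beta}A_\alpha$) and that each $A_\alpha$ has density character $<\kappa$, so only $<\kappa$ generators are needed to reach each level. Second, I would invoke \cite[Theorem 7.1.4]{Fa:STCstar} to produce a club $C$ of ordinals $\alpha<\kappa$ such that the substructure coded by $X\cap\alpha$ is genuinely an elementary (or at least closed) substructure agreeing with $(A_\alpha,\bar\varphi\rs A_\alpha)$. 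Third, I would verify closure and unboundedness of the resulting set directly: unboundedness follows from iterating the Löwenheim--Skolem closure $\omega$ many times above any given ordinal and taking the supremum, and closure at limits follows from the continuity of the chain together with the observation that a union of codes $\bigcup_{\beta<\lambda}(X\cap\beta)=X\cap\lambda$ codes $\varinjlim_{\beta<\lambda}A_\beta=A_\lambda$.

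The step I expect to be the main obstacle is the bookkeeping that aligns the \emph{combinatorial} initial segment $X\cap\alpha$ with the \emph{algebraic} substructure $A_\alpha$. The coding in \cite[\S7.1-2]{Fa:STCstar} does not, for an arbitrary chain, make $X\cap\alpha$ automatically a code for $A_\alpha$; one genuinely needs the enumeration of generators to be interleaved so that the ordinals below $\alpha$ exhaust the generators of $A_\alpha$ for club-many $\alpha$. Handling the states $\bar\varphi$ adds a parallel requirement: the code must also record $\bar\varphi$ on the generators below $\alpha$, and one must check $\bar\varphi\rs A_\alpha$ is recovered correctly, which is immediate once the algebraic alignment is in place since restriction of a tuple of states commutes with the coding. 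Once this alignment is arranged, the remaining content is exactly \cite[Theorem 7.1.4]{Fa:STCstar} applied levelwise, which is why the statement is given without proof (the ``$\qed$'' in the excerpt) and treated as a routine consequence.
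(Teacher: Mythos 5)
Your proposal is correct and takes essentially the same route as the paper, which states Lemma \ref{L.LST} without proof as a direct consequence of the metric L\"owenheim--Skolem theorem \cite[Theorem 7.1.4]{Fa:STCstar} together with the routine closing-off/club argument you spell out. One small caution: since $X$ is a \emph{given} code, you cannot literally ``fix an enumeration that respects the filtration''; instead one proves (exactly as in your third step and your closing paragraph) that for club-many $\alpha$ the elements indexed below $\alpha$ are dense in $A_\alpha$ and $X\cap\alpha$ is closed under the coding apparatus, so your argument as a whole does establish the stated lemma.
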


If $M$ is a transitive model of \ZFCmP{} then we slighty abuse the terminology and say that a \cstar-algebra $B$ \emph{belongs} to $M$ if some code for $B$ belongs to $M$. The analogous remark applies to states of $B$. 

Glimm's dichotomy (see e.g. \cite[Corollary 5.5.8]{Fa:STCstar}) implies that every simple \cstar-algebra $A$ of density character $\kappa<\kc$ either has a unique pure state up to unitary equivalence, in which case $A\cong\sK(\ell_2(\kappa))$, or has $\kc$ unitary equivalence classes of pure states. The conclusion of the following theorem was deduced from $\diamondsuit_{\aleph_1}$ in \cite[Theorem~1.2]{farah2016simple}, as announced in \cite[\S 8.2]{Fa:Logic} (see also \cite[Theorem~11.2.2]{Fa:STCstar}). The special case when $m=1$ (using the full $\diamondsuit_{\aleph_1}$) is the Akemann--Weaver result.

The case when $m=1$ of Theorem \ref{realmain} below is Theorem \ref{main}. In its proof we adopt the approach to $\diamondsuit_{\aleph_1}$ constructions introduced in \cite{rinotdiamond}.

\begin{theo}\label{realmain}
If $\diacoh+\ax{CH}$ holds, then for all $m\geq 1$ there is a simple \cstar-algebra of density character $\aleph_1$ with exactly $m$ pure states up to unitary equivalence that is not isomorphic to any algebra of compact operators on a complex Hilbert space.
\end{theo}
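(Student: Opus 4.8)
The plan is to build the algebra as an inductive limit of length $\aleph_1$, transfinitely amalgamating crossed products by the generic automorphisms from \S\ref{S.forcingE}, and to use the $\diacoh$-chain to guess potential ``new'' pure states so that every pure state gets absorbed into one of exactly $m$ equivalence classes. Start with a separable, simple, unital, non-type I \cstar-algebra $A_0$ (say $A_0:=\CAR$), fix $m$ inequivalent pure states $\varphi_0,\dots,\varphi_{m-1}$ on $A_0$ to serve as representatives of the $m$ classes we intend to preserve, and recursively construct an increasing continuous chain $(A_\alpha:\alpha<\aleph_1)$ of separable \cstar-algebras together with fixed extensions of the $\varphi_i$ to each $A_\alpha$. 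At a successor stage $\alpha+1$ I would use the $\diacoh$-chain's model $M_\alpha$ to select a pure state $\rho$ of $A_\alpha$ that $M_\alpha$ believes is not equivalent to any $\varphi_i$, pick the target $\varphi_i$ to which we wish to make it equivalent, and then form $A_{\alpha+1}:=A_\alpha\rtimes_{\Phi}\bZ$ where $\Phi$ is the automorphism added by $\bE_{A_\alpha^\circ}(\bar\varphi^\frown\rho, \bar\varphi^\frown\varphi_i)$ for an appropriate tuple $\bar\varphi$; by Theorem~\ref{sumup} this makes $\rho$ conjugate to $\varphi_i$ in $A_{\alpha+1}$. The key point is condition $\diacoh$\eqref{3.diacoh}: it guarantees a Cohen-generic real over $M_\alpha$ in $M_{\alpha+1}$, which by Theorem~\ref{sumup}\eqref{2.sumup} and the Solovay characterization is exactly what is needed to obtain an $M_\alpha$-generic filter on the (countable, hence Cohen-equivalent) forcing $\bE_{A_\alpha^\circ}(\cdots)$, so the construction can actually be carried out inside the chain.

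\emph{The crux is the final verification that $A:=\varinjlim_{\alpha<\aleph_1}A_\alpha$ has exactly $m$ pure states up to equivalence.} Here is where $\diacoh$\eqref{2.diacoh} does the heavy lifting. I would code the pair $(A,\bar\varphi)$ by a subset $X\subseteq\aleph_1$, and given an arbitrary pure state $\rho$ on $A$, its restriction to cofinally many $A_\alpha$ (by a reflection/L\"owenheim--Skolem argument as in Lemma~\ref{L.LST}) is a pure state of $A_\alpha$. The stationarity in $\diacoh$\eqref{2.diacoh}, applied to a code capturing both $X\cap\alpha$ and a code for $\rho$, produces a stationary set of $\alpha$ for which $\rho\rs A_\alpha\in M_\alpha$ and $M_\alpha$ correctly sees it; at such an $\alpha$ the bookkeeping must have scheduled $\rho\rs A_\alpha$ for absorption into some $\varphi_i$, so $\rho$ is equivalent to $\varphi_i$ in $A$. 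The main obstacle I anticipate is organizing the bookkeeping so that \emph{every} pure state of $A$ — including ones appearing only at limit stages or only as elements of the final limit — is guessed and handled; this is precisely why the stationarity (rather than club) formulation of $\diacoh$\eqref{2.diacoh}, together with the approach of \cite{rinotdiamond} that the authors signal they adopt, is essential: one does not diagonalize against an enumeration but instead guesses the final object on a stationary set, which suffices because the set of $\alpha$ where $\rho\rs A_\alpha$ is a genuine pure state reflected correctly is stationary.

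To finish, I would use Corollary~\ref{uep} and Lemma~\ref{L.absolute}\eqref{2.absolute} to check that the $m$ states $\varphi_i$ remain pairwise inequivalent throughout the construction: at each stage we only ever force $\bar\varphi^\frown\rho\mapsto\bar\varphi^\frown\varphi_i$ with the tuple $\bar\varphi$ kept inequivalent, so by Theorem~\ref{agap} no new coincidences among the $\varphi_i$ are created and each $\varphi_i$ retains a unique pure state extension to every subsequent crossed product. Simplicity and non-type-I are preserved at successors by Corollary~\ref{bacan}, and at limits by Lemma~\ref{absolute} together with the fact that an increasing union of simple unital \cstar-algebras is simple. Finally, since $A$ has density character $\aleph_1$ (using $\ax{CH}$ to keep each $A_\alpha$ separable and the length at $\aleph_1$) yet has fewer than $\kc$ equivalence classes of pure states, Glimm's dichotomy forces $A$ to be either compact-operator-like or to have $m<\kc$ classes; the former is impossible since $\sK(\ell_2(\kappa))$ has a \emph{unique} pure state class while we have arranged $m\geq 1$ classes with $A$ non-type I and of uncountable density character, so $A$ is the desired counterexample (for $m=1$ this is Theorem~\ref{main}).
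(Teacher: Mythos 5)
Your overall scaffolding matches the paper's: an inductive system of length $\aleph_1$, Cohen-generic reals from $\diacoh$(c) supplying generic filters on the countable forcings $\bE_{A_\alpha^\circ}(\cdot,\cdot)$, a $\ax{CH}$-based bookkeeping, and the stationary guessing of a code for the final pair to verify that no unaccounted pure states survive. However, there is a genuine error at the successor stage, in the choice of the forcing. You force with $\bE_{A_\alpha^\circ}(\bar\varphi^\frown\rho,\bar\varphi^\frown\varphi_i)$, keeping the distinguished states as a common prefix $\bar\varphi$ with the intention of ``preserving'' them by making the generic automorphism fix them. This backfires. By Theorem \ref{sumup}, the generic $\Phi_\kG$ satisfies $\varphi_j\circ\Phi_\kG=\varphi_j$ for every spectator $\varphi_j$ in $\bar\varphi$, and then by the Akemann--Weaver criterion --- this is exactly the first part of Corollary \ref{uep}, applied with $\rho\sim\varphi_j\sim\psi_j$ --- each such $\varphi_j$ has \emph{multiple} pure state extensions to $A_\alpha\rtimes_{\Phi_\kG}\bZ$; in fact it has a circle's worth of pairwise inequivalent ones, parametrized by the value on the canonical unitary (cf.\ Remark \ref{R.extensions}). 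This directly contradicts your closing claim that ``each $\varphi_i$ retains a unique pure state extension to every subsequent crossed product,'' and the cited results (Theorem \ref{agap}, the second part of Corollary \ref{uep}) cannot repair it, since they apply only to states with unique extensions. Losing uniqueness for the distinguished states is fatal twice over: the count of equivalence classes is no longer controlled (every successor stage spawns new inequivalent pure states above each spectator), and the induction needed in the final verification --- that $\psi\rs A_\alpha$ has a \emph{unique} pure state extension to every later $A_\beta$, so that the bookkeeping merge at stage $\beta$ actually propagates to $\psi$ itself --- breaks down whenever the tracked state is equivalent to a spectator.

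The paper does the opposite of pinning the spectators: at a successor stage it forces with the $1$-tuples $\bE_{A_\alpha^\circ}(\varphi_0,\tilde\psi)$ only, where $\tilde\psi$ is the scheduled state and $\varphi_0\nsim\tilde\psi$. Corollary \ref{uep} then guarantees that \emph{every} ground-model pure state in the relevant model --- the other $\varphi_i$ included --- has a unique pure state extension to the crossed product, and that the extensions of $\varphi_0$ and $\tilde\psi$ are the only ones that become equivalent. The spectators stay pairwise inequivalent not because the automorphism fixes them, but because the generic automorphism moves every unmatched ground-model pure state to a state inequivalent to all ground-model pure states (Theorem \ref{agap}); all merging is funneled into the single class of $\varphi_0$. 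A smaller point: your final appeal to Glimm's dichotomy is misplaced, since under $\ax{CH}$ the algebra has density character $\aleph_1=2^{\aleph_0}$ and the dichotomy you invoke applies only below the continuum; nothing of the sort is needed, as a unital infinite-dimensional \cstar-algebra is trivially not isomorphic to an algebra of compact operators.
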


\begin{proof}
Let $(M_\alpha:\alpha<\aleph_1)$ be a $\diacoh$-chain. Using the Continuum Hypothesis, fix  a surjection $f\colon\aleph_1\to H(\aleph_1)$ such that every element of $H(\aleph_1)$ is listed cofinally often. By recursion on $\beta<\aleph_1$, we will define an inductive system of separable, simple, unital, and non-type I \cstar-algebras $A_\beta$. 

Let $A_0$ be a separable, simple, unital, non-type I \cstar-algebra and let $\varphi_i$, for $i<m$, be inequivalent pure states of $A_0$. At the latter stages of the construction we will assure that the following conditions hold for all $\alpha<\aleph_1$.
\begin{enumerate}
    \item If $\xi<\alpha$ then $A_\xi$ is a unital \cstar-subalgebra of $A_\alpha$.
    \item\label{2.main} With $\gamma(\alpha):=\min\{\gamma:A_\alpha\in M_\gamma\}$, $A_{\alpha+1}$ belongs to $M_{\gamma(\alpha)+1}$.\footnote{This function is well-defined: Since $A_\alpha$ is separable, it is coded by a real and therefore belongs to $\bigcup_{\alpha<\aleph_1}M_\alpha$.}
 	\item\label{3.main} Every pure state of $A_\alpha$ that belongs to $M_{\gamma(\alpha)}$ has a unique pure state extension to $A_{\alpha+1}$.
 	\item\label{4.main} If $f(\alpha)$ is a code for a pair $(A_\xi,\psi)$, where $\xi<\alpha$ and $\psi$ is a pure state of $A_\xi$ which has a unique pure state extension to $A_\alpha$, then $\psi$ has a unique pure state extension to $A_{\alpha+1}$, and this extension is equivalent to (the unique pure state extension of) some $\varphi_i$, for $i<m$.  
\end{enumerate}

To describe the recursive construction, suppose that $\beta$ is a countable ordinal such that $A_\alpha$ as required has been defined for all $\alpha<\beta$. 

Consider first the case when $\beta$ is a successor ordinal, $\beta=\alpha+1$. Suppose for a moment that $f(\alpha)$ is a code for a pair $(A_\xi,\psi)$ with the following properties: 
\begin{enumerate}
    \item[(a)] $\xi<\alpha$. 
    \item[(b)] $\psi$ is a pure state of $A_\xi$ that has a unique extension $\tilde\psi$ to a pure state of $A_\alpha$. 
    \item[(c)] For all $i<m$, $\tilde\psi$ is inequivalent to the unique extension of $\varphi_i$ to $A_\alpha$ (still denoted $\varphi_i$). 
\end{enumerate}

By the second part of Theorem \ref{sumup}, $\bE_{A^\circ_\alpha}(\varphi_0,\tilde\psi)$ is forcing-equivalent to the poset for adding a single Cohen real. Since $M_{\gamma(\alpha)+1}$ contains a real that is Cohen-generic over $M_{\gamma(\alpha)}$, by \cite[Lemma IV.4.7]{Ku:Set}, it contains an $M_{\gamma(\alpha)}$-generic filter $\kG$ on $\bE_{A^\circ_\alpha}(\varphi_0,\tilde\psi)$. By the first part of Theorem \ref{sumup}, $\Phi_{\kG}$ is an approximately inner automorphism of $A_\alpha$ such that $\varphi_0\circ\Phi_{\kG}=\tilde\psi$. By Corollary \ref{uep}, the \cstar-algebra $A_{\alpha+1}:=A_\alpha\rtimes_{\Phi_{\kG}}\bZ$ has the property that every pure state of $A_\alpha$ that belongs to $M_{\gamma(\alpha)}$ has a unique pure state extension to $A_{\alpha+1}$. By the second part of Corollary~\ref{uep}, the unique pure state extensions of $\varphi_i$, for $i<m$, to $A_{\alpha+1}$ are inequivalent. Also, $A_{\alpha+1}$ is separable, simple, unital, and non-type I by Corollary \ref{bacan}.  

If $f(\alpha)$ does not satisfy the conditions (a)--(c), let $A_{\alpha+1}:=A_\alpha$. 

If $\beta$ is a limit ordinal, take $A_\beta:=\varinjlim_{\alpha<\beta} A_\alpha$.

Finally, let $A_{\aleph_1}:=\varinjlim_{\alpha<\aleph_1} A_\alpha$.

By the construction described above, each one of the the pure states $\varphi_i$, for $i<m$, of $A_0$ has a unique pure state extension to $A_{\aleph_1}$, and these pure state extensions are inequivalent.

Suppose that $\psi$ is a pure state of $A_{\aleph_1}$. In order to prove that it is equivalent to $\varphi_i$ for some $i<m$, fix a code $X\subseteq\aleph_1$ for the pair $(A_{\aleph_1},\psi)$. By \cite[Proposition~7.3.10]{Fa:STCstar}, the set $\{\alpha<\aleph_1:\psi\rs A_\alpha\text{ is pure}\}$ is a club, and by Lemma \ref{L.LST}, the set
\begin{equation*}
    \{\alpha<\aleph_1:X\cap \alpha\text{ is a code for }(A_\alpha,\psi\rs A_\alpha)\}    
\end{equation*}
is a club as well. By $\diacoh$\eqref{2.diacoh}, there exists some $\alpha$ in the intersection of these two clubs such that $X\cap\alpha\in M_\alpha$. In particular, both $A_\alpha$ and $\psi\rs A_\alpha$ belong to $M_\alpha$, i.e., $\gamma(\alpha)=\alpha$.

\begin{claim}
The pure state $\psi\rs A_\alpha$ of $A_\alpha$ has a unique pure state extension to $A_\beta$ for every $\beta>\alpha$.
\end{claim}

\begin{proof}
This is proved by induction on $\beta$: At the successor stages, it follows from properties \eqref{2.main} and \eqref{3.main}. At the limits, note that the unique pure state extension of $\psi\rs A_\alpha$ is definable from $\psi\rs A_\xi$, for $\alpha<\xi<\beta$, and therefore belongs to the relevant model. 
\end{proof}

By the choice of the function $f$, there exists some $\beta<\aleph_1$ such that $f(\beta)$ codes the pair $(A_\alpha,\psi\rs {A_\alpha}$). By the definition of $A_{\beta+1}$, the restrictions of $\psi$ and some $\varphi_i$, for $i<m$, to $A_{\beta+1}$ are equivalent. This proves that $A_{\aleph_1}$ has exactly $m$ inequivalent pure states. Since $A_{\aleph_1}$ is infinite-dimensional and unital, it is not isomorphic to any algebra of compact operators.
\end{proof}

The proof of Theorem \ref{main} will be completed in Appendix \ref{S.diamond}. In this section, we will prove that $\diacoh+\ax{CH}$ is relatively consistent with the negation of $\diamondsuit_{\aleph_1}$. Once proven, this will provide a model of $\ax{ZFC}$ in which both $\diamondsuit_{\aleph_1}$ and Glimm's dichotomy fail.

\section{A proof of Theorem~\ref{C.separable}, part I: Extending a GNS representation gently}\label{S.X}

This section contains finer analysis of the forcing notion $\bEA{\bar\varphi}{\bar\psi}$, culminating in Lemma \ref{L.Extension}. The following is an analog of Theorem \ref{agap}. 

\begin{theo}\label{preglimm}
Suppose that $\Theta$ is an outer automorphism of a separable, simple, unital, non-type I \cstar-algebra $A$, $m\geq 1$, $\bar\varphi$ and $\bar\psi$ belong to $\pure_m(A)$, and $\rho$ is a pure state of $A$ inequivalent to all $\varphi_i$, for $i<m$. If $\Theta_\kG$ is defined as $\Phi_\kG\circ\Theta\circ\Phi_\kG^{-1}$, then $\bE_{A^\circ}(\bar\varphi,\bar\psi)$ forces that $\rho\circ\Theta_\kG$ is inequivalent to any ground-model pure state of $A$. 
\end{theo}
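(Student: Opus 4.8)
The plan is to mirror the proof of Theorem~\ref{agap}, first transporting the conjugated relation back onto the already-understood behaviour of $\Phi_\kG$, and then eliminating the residual possibilities by appending side conditions through Lemma~\ref{changeposet}. I would argue by contradiction: suppose some $q\in\bE_{A^\circ}(\bar\varphi,\bar\psi)$ forces $\rho\circ\Theta_\kG\sim\sigma$ for a ground-model pure state $\sigma$. Since the ground-model unitaries are norm-dense (Lemma~\ref{L.absolute}\eqref{1.absolute}), I may extend $q$ and fix $u\in\U(A)\cap A^\circ$ with $q\Vdash\|\rho\circ\Theta_\kG-\sigma\circ\Ad u\|<1/2$. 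Using $\Theta_\kG\circ\Phi_\kG=\Phi_\kG\circ\Theta$ and that right composition with an automorphism preserves (and reflects) unitary equivalence, the forced relation is equivalent to $\rho\circ\Phi_\kG\circ\Theta\sim\sigma\circ\Phi_\kG$.

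Next I would record two consequences of Theorem~\ref{agap}. Because $\rho\nsim\varphi_i$ for all $i<m$, no condition forces $\rho\circ\Phi_\kG$ to be equivalent to any ground-model pure state, hence it is forced that $\rho\circ\Phi_\kG$ is inequivalent to every ground-model pure state; as $\Theta\in\Aut(A)$ belongs to the ground model, the same holds for $\rho\circ\Phi_\kG\circ\Theta$. Consequently, if $\sigma\sim\varphi_i$ for some $i$, then $\sigma\circ\Phi_\kG\sim\psi_i$ is a ground-model pure state, which is incompatible with $\rho\circ\Phi_\kG\circ\Theta\sim\sigma\circ\Phi_\kG$; this disposes of that case at once. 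It remains to treat $\sigma\nsim\varphi_i$ for all $i$.

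The main mechanism tracks the $\Phi_\kG$-images of $\rho$ and $\sigma$. When $\rho\nsim\sigma$, I choose ground-model pure states $\rho_1,\sigma_1$ with $\bar\psi^\frown(\rho_1,\sigma_1)\in\pure_{m+2}(A)$ and, crucially, $\rho_1\circ\Theta\nsim\sigma_1$; this is possible since only finitely many classes ($\psi_i$, $\rho_1$, $\rho_1\circ\Theta$) must be avoided while $A$, being non-type~I, has $2^{\aleph_0}$ pure classes. Applying Lemma~\ref{changeposet}\eqref{2.changeposet} with $(\rho,\sigma)$ appended to $\bar\varphi$ and $(\rho_1,\sigma_1)$ appended to $\bar\psi$ produces equivalent representatives $(\rho^\sharp,\sigma^\sharp)\sim(\rho,\sigma)$, $(\rho_1^\sharp,\sigma_1^\sharp)\sim(\rho_1,\sigma_1)$ and a condition $p\le q$ in $\bbP_1':=\bE_{A^\circ}(\bar\varphi^\frown(\rho^\sharp,\sigma^\sharp),\bar\psi^\frown(\rho_1^\sharp,\sigma_1^\sharp))$. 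For an $M$-generic $\kH\ni p$ on $\bbP_1'$, Theorem~\ref{sumup} gives $\rho^\sharp\circ\Phi_\kH=\rho_1^\sharp$ and $\sigma^\sharp\circ\Phi_\kH=\sigma_1^\sharp$, whence $\rho\circ\Phi_\kH\circ\Theta\sim\rho_1\circ\Theta$ and $\sigma\circ\Phi_\kH\sim\sigma_1$; as $\rho_1\circ\Theta\nsim\sigma_1$, in $M[\kH]$ we get $\rho\circ\Phi_\kH\circ\Theta\nsim\sigma\circ\Phi_\kH$, i.e.\ $\rho\circ\Theta_\kH\nsim\sigma$. To convert this into a genuine contradiction with $q$ I would copy the device from Theorem~\ref{agap}: by Lemma~\ref{changeposet}\eqref{1.changeposet}, $p\le q$ holds in $\bbP_0:=\bE_{A^\circ}(\bar\varphi,\bar\psi)$ as well, the two orderings agree on $\bbP_1'$, and the generic automorphism is computed from the same data $v_r,w_r$ in either poset; extending $p$ to a single $r\in\bbP_1'$ placing a suitable test element in $G_r$ with $\delta_r$ small, the inequivalence $\rho_1\circ\Theta\nsim\sigma_1$ forces a quantitative estimate ($\geq 3/2$ on that element) that is incompatible with the bound $<1/2$ inherited from $q\ge r$ in $\bbP_0$.

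The one remaining case, $\rho\sim\sigma$, is where I expect the real difficulty. Here one appends the single coordinate $\rho\mapsto\rho_1$, and the tracking forces $\sigma\circ\Phi_\kH\sim\rho\circ\Phi_\kH\sim\rho_1$, so the contradiction now requires $\rho_1\circ\Theta\nsim\rho_1$; that is, $\rho$ must be routed (using the freedom to pick $\rho_1$ up to equivalence together with Lemma~\ref{goodlemma}) to a pure state whose class is actually moved by $\Theta$. This is precisely the point at which outerness of $\Theta$ — as opposed to any weaker hypothesis — is indispensable: I would invoke that an outer automorphism of a simple, separable, unital, non-type~I \cstar-algebra is not trivial on the spectrum, so that some pure state $\zeta$ satisfies $\zeta\circ\Theta\nsim\zeta$ (a fact belonging to the circle of \cite{kishimoto1981outer} and the Akemann--Weaver / Kishimoto--Ozawa--Sakai analysis underlying Corollary~\ref{uep}). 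Pinning down exactly this movability from outerness alone is the delicate step I would need to verify with care; once it is in hand, the fixed-point case closes by the same single-condition transfer argument used above.
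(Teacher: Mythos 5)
Your overall architecture is exactly the paper's: the same contradiction setup with $q\Vdash\|\rho\circ\Theta_\kG-\sigma\circ\Ad u\|<1/2$, the same rewriting as $\rho\circ\Phi_\kG\circ\Theta\sim\sigma\circ\Phi_\kG$, the same use of Lemma~\ref{changeposet} to append states routed by the generic into prescribed inequivalent classes, and the same two-poset quantitative transfer (test element with separation $\geq 3/2$ against bounds $<1/2$, via elements of $G_r$ and $\delta_r$ small). Your non-fixed case $\rho\nsim\sigma$, $\sigma\nsim\varphi_i$ coincides with the paper's second subcase, and your disposal of the subcase $\sigma\sim\varphi_i$ is actually \emph{cleaner} than the paper's: the paper runs the changeposet machinery there too (choosing $\zeta$ with $\zeta\circ\Theta\nsim\psi_i$), whereas you get it directly from Theorem~\ref{agap} plus the maximality principle, using that if $\rho\circ\Phi_\kG$ is forced inequivalent to all ground-model states then so is $\rho\circ\Phi_\kG\circ\Theta$ (since $\tau\circ\Theta^{-1}$ is again a ground-model state); that argument is correct.

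The genuine gap is in the case $\rho\sim\sigma$, and it is slightly different from the one you flagged. The fact you propose to invoke --- outerness gives \emph{some} pure state $\zeta$ with $\zeta\circ\Theta\nsim\zeta$, which is indeed Kishimoto's result (see \cite[Corollary~2.3]{kishimoto1981outer}) --- is true but too weak for the argument. To apply Lemma~\ref{changeposet} you need $\zeta$ with $\zeta\circ\Theta\nsim\zeta$ \emph{and} with $\zeta$ (and, as the paper also requires, $\zeta\circ\Theta$) inequivalent to every $\psi_i$; otherwise $\bar\psi^\frown\zeta\notin\pure_{m+1}(A)$ and no appending is possible. Your suggested remedy, ``the freedom to pick $\rho_1$ up to equivalence together with Lemma~\ref{goodlemma},'' cannot supply this: replacing a state by an equivalent one never changes which equivalence class it meets, so if the (possibly finitely many) classes moved by $\Theta$ all happened to lie among the $[\psi_i]$, your argument would stall, and Kishimoto's corollary alone does not exclude this pathology. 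What the paper uses at exactly this point is the strengthening \cite[Theorem~2.4]{farah2016simple}, a ``slight extension'' of \cite[Theorem~2.1]{kishimoto1981outer} providing an \emph{uncountable} family of inequivalent pure states each moved by the outer automorphism $\Theta$, from which one can choose $\zeta$ avoiding the finitely many forbidden classes. Once that result is cited, your outline closes exactly as the paper's proof does.
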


\begin{proof}
Towards obtaining a contradiction, assume that in $M[\kG]$ we have $\rho\circ\Theta_\kG\sim\sigma$ for a ground-model pure state $\sigma$. Then fix $u\in\U(A)\cap A^\circ$ and $q\in\bE_{A^\circ}(\bar\varphi,\bar\psi)$ such that $q$ forces that $\|\rho\circ\Theta_\kG -\sigma \circ \Ad u\|<1/2$. This implies that
\[
\|\rho\circ \Phi_\kG\circ\Theta-\sigma \circ \Ad u \circ \Phi_\kG\|<1/2.
\]
We first consider the most difficult case, when $\rho$ is equivalent to $\sigma$. 

Since $\Theta$ is outer, by the slight extension of \cite[Theorem 2.1]{kishimoto1981outer} proved in \cite[Theorem~2.4]{farah2016simple}, there exists an uncountable set of pure states $\eta$ of $A$ each of which satisfies $\eta\circ\Theta\nsim\eta$. We can therefore choose a pure state $\eta$ such that $\eta\nsim\eta\circ\Theta$ and each one of $\eta$ and $\eta\circ \Theta$ is inequivalent to all $\psi_i$, for $i<m$. By Lemma \ref{changeposet}, there are a pure state $\rho'\sim\rho$ and a condition $p\leq q$ in $\bEA{\bar\varphi}{\bar\psi}$ such that $p$ also belongs to the forcing notion $\bEA{\bar\varphi^\frown\rho'}{\bar\psi^\frown\eta}$. 

The remainder is analogous to the corresponding part of the proof of Theorem~\ref{agap}: if $\kH$ is an $M$-generic filter on $\bE_{A^\circ}(\bar\varphi^\frown\rho',\bar\psi^\frown\eta)$ containing $p$ then, in $M[\kH]$, $\rho'\circ\Phi_\kH=\eta$. Let $v\in\U(A)$ be such that $\rho=\rho'\circ\ad v$ and set $v_\kH:=\Phi_\kH^{-1}(v)$, so that $\rho\circ\Phi_\kH=\rho'\circ\ad v\circ\Phi_\kH=\rho'\circ\Phi_\kH\circ\ad v_\kH$. Since $\eta\nsim\eta\circ\Theta$, we have that $\eta\circ\ad v_\kH\circ \Theta\nsim\eta\circ\ad v_\kH$. Because of this, we can find $a\in A_{\leq 1}^M$ such that
\begin{equation*}
    \left|\left(\eta\circ\ad v_\kH\circ\ad u\right)(a)-\left(\eta\circ\ad v_\kH\circ\Theta\right)(a)\right|\geq 3/2. 
\end{equation*}

Since $A^M$ is norm-dense in $A^{M[\kH]}$, there is a condition $r\in\bE_{A^\circ}(\bar\varphi^\frown\rho',\bar\psi^\frown\eta)$ extending $p$, with $\delta_r<1/6$, and such that some $b$ and $c$ in $G_r$ satisfy
\begin{equation*}
    \max\left\{\|b-\ad v_\kH u(a)\|,\|c-\ad v_\kH(\Theta(a))\|\right\}<1/6.
\end{equation*}
By the easy part of Lemma \ref{changeposet}, $r\in\bE_{A^\circ}(\bar\varphi,\bar\psi)$ and it extends $q$ in this poset. From the choice of $r$, we can conclude that, in $\bE_{A^\circ}(\bar\varphi,\bar\psi)$, $r$ forces both 
\begin{align*}
    |(\rho\circ\Phi_\kH\circ\ad u)(a)-(\eta\circ&\ad v_\kH\circ\ad u)(a)|\\
    &=|(\rho'\circ\Phi_\kH)(\ad v_\kH u(a))-\eta(\ad v_\kH u(a))|\\
    &\leq|(\rho'\circ\Phi_\kH)(b)-\eta(b)|+2\|b-\ad v_\kH u(a)\|<1/2
\end{align*}
and 
\begin{align*}
    |(\rho\circ\Phi_\kH\circ\Theta)(a)-(\eta&\circ\ad v_\kH\circ \Theta)(a)|\\
    &=|(\rho'\circ\Phi_\kH)(\ad v_\kH(\Theta(a)))-\eta(\ad v_\kH(\Theta(a)))|\\
    &\leq|(\rho'\circ\Phi_\kH)(c)-\eta(c)|+2\|c-\ad v_\kH(\Theta(a))\|<1/2.  
\end{align*}
By the triangle inequality and our choice of $a$, we obtain $1/2+1/2> 3/2$; contradiction. This concludes the discussion of the case when $\rho\sim \sigma$.

Suppose now that $\rho\nsim\sigma$. As in the first case, in each of the two subcases of this case (corresponding to  (1) and (2) below), we will use Lemma \ref{changeposet} to define a forcing notion $\bbP$ and a condition $p\leq q$ in $\bE_{A^\circ}(\bar\varphi,\bar\psi)$ that also belongs to $\bbP$:

\begin{enumerate}
    \item If $\sigma\sim \varphi_i$ for some $i<m$, choose a pure state $\zeta$ that is not equivalent to any of the $\psi_j$ and such that in addition $\zeta\circ\Theta\nsim \psi_i$. This is possible because $A$ has $2^{\aleph_0}$ inequivalent pure states. By Lemma \ref{changeposet}, there are is a $\rho'\sim \rho$ and a condition $p\leq q$ in the poset $\bbP:=\bEA{\bar\varphi^\frown \rho'}{\bar\psi^\frown\zeta}$.
    
    \item If $\sigma$ is not equivalent to any of the $\varphi_i$, choose two pure states, $\zeta$ and $\eta$, that are not equivalent to any of the $\psi_j$ and such that in addition $\zeta\circ\Theta\nsim\eta$. By Lemma \ref{changeposet}, there are pure states $\rho'\sim \rho$, $\sigma'\sim \sigma$, and a condition $p\leq q$ in the poset $\bbP:=\bEA{\bar\varphi^\frown\rho'^{\frown}\sigma'} {\bar\psi^\frown\zeta^\frown\eta}$.
\end{enumerate}
In each of these two cases the proof that the assumptions lead to a contradiction is analogous to the proof in the case when $\rho\sim\sigma$ and is therefore omitted.  
\end{proof}

\begin{cor}\label{uepsiete}
Suppose that $A$ is a separable, simple, unital \cstar-algebra, $\Theta$ is an outer automorphism of $A$ of order two, $m\geq 1$, $\bar\varphi$ and $\bar\psi$ belong to $\pure_m(A)$, and $\rho$ is a pure state of $A$ inequivalent to all the $\varphi_i$, for $i<m$. If $\Theta_\kG:=\Phi_\kG\circ\Theta\circ\Phi_\kG^{-1}$, then $\rho$ has multiple pure state extensions to $A\rtimes_{\Theta_\kG}\bZ/2\bZ$ if, and only if, there exists some $i<m$ such that $\rho\sim\varphi_i\sim\psi_i$.
\end{cor}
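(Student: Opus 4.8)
The plan is to reduce the statement to Theorem~\ref{preglimm} together with the Akemann--Weaver unique-extension criterion, now applied to the group $\bZ/2\bZ$ in place of $\bZ$. First I would check that the crossed product in the statement is well-defined: since $\Theta$ has order two and conjugation by the automorphism $\Phi_\kG$ preserves the order of an automorphism, we get $\Theta_\kG^2=\Phi_\kG\circ\Theta^2\circ\Phi_\kG^{-1}=\id$, so $\Theta_\kG$ generates an action of $\bZ/2\bZ$ and $A\rtimes_{\Theta_\kG}\bZ/2\bZ$ makes sense. I would also record that $A$ is automatically non-type I: a simple, unital, type I \cstar-algebra is isomorphic to some $M_n(\bbC)$, all of whose automorphisms are inner by the Skolem--Noether theorem, which is incompatible with the existence of the outer automorphism $\Theta$. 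Hence the hypotheses of Theorem~\ref{preglimm} are satisfied.

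Next I would invoke the $\Gamma=\bZ/2\bZ$ instance of the unique-extension theorem (the general-group statement \cite[Theorem~5.4.8]{Fa:STCstar}, resp.\ the criterion recalled in \S\ref{S.unique} in the form \cite[Proposition~5.4.7]{Fa:STCstar}): a pure state $\rho$ of $A$ has a unique pure state extension to $A\rtimes_{\Theta_\kG}\bZ/2\bZ$ if and only if $\rho\nsim\rho\circ\Theta_\kG^n$ for every nonzero $n\in\bZ/2\bZ$. Since the only nonzero element of $\bZ/2\bZ$ is $1$, this says precisely that $\rho$ has multiple pure state extensions if and only if $\rho\sim\rho\circ\Theta_\kG$.

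The main input is then Theorem~\ref{preglimm}. Because $\rho$ is inequivalent to all $\varphi_i$, for $i<m$, that theorem forces $\rho\circ\Theta_\kG$ to be inequivalent to every ground-model pure state of $A$. As $\rho$ is itself a ground-model pure state, in particular $\rho\circ\Theta_\kG\nsim\rho$ is forced. By the previous paragraph, $\rho$ therefore has a unique pure state extension, so the left-hand side of the asserted biconditional (``$\rho$ has multiple pure state extensions'') is false in the extension.

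Finally I would observe that the right-hand side is false as well: the condition ``there exists $i<m$ with $\rho\sim\varphi_i\sim\psi_i$'' in particular demands $\rho\sim\varphi_i$, contradicting the standing hypothesis $\rho\nsim\varphi_i$ for all $i<m$. Both sides of the biconditional being false, the equivalence holds. The genuine content of the argument lives entirely in Theorem~\ref{preglimm}; the only point worth flagging is the passage from the $\bZ$-version of the unique-extension criterion used in Corollary~\ref{uep} to its $\bZ/2\bZ$-version, which is exactly why one wants the general-group form \cite[Theorem~5.4.8]{Fa:STCstar} rather than the $\bZ$-specific \cite[Proposition~5.4.7]{Fa:STCstar}.
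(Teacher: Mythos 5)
Your proof is correct and follows exactly the route the paper intends: the paper's own proof of this corollary is the one-line remark that one argues as in Corollary \ref{uep} with Theorem \ref{preglimm} in place of Theorem \ref{agap}, which is precisely what you have spelled out (the $\bZ/2\bZ$ instance of the Akemann--Weaver unique-extension criterion, Theorem \ref{preglimm} applied to $\rho$ to kill the left-hand side, and the vacuous falsity of the right-hand side under the standing hypothesis $\rho\nsim\varphi_i$). Your additional observation that simplicity, unitality, and the existence of the outer automorphism $\Theta$ already force $A$ to be non-type I --- so that Theorem \ref{preglimm} applies even though the corollary's statement omits that hypothesis --- is a worthwhile detail that the paper leaves implicit.
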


\begin{proof} As in the proof of Corollary~\ref{uep}, by Theorem~\ref{sumup} only the direct implication requires a proof. We prove its contrapositive, whose proof is analogous to the proof of Corollary \ref{uep}.   Assume that there is no $i$ such that $\rho\sim \varphi_i \sim \psi_i$. Then Theorem~\ref{preglimm} (with $\sigma=\rho$) implies that  $\rho\circ\Theta_\kG$ is inequivalent to $\rho$. Since $\Theta_\kG$ has order two, \cite[Theorem~5.4.8]{Fa:STCstar} implies that  $\rho$ has a unique pure state extension to the reduced crossed product. 
\end{proof}

In order to prove Theorem \ref{C.separable}, we need to take a closer look at the inner workings of the GNS construction (see \cite[\S 1.10]{Fa:STCstar}). If $\varphi$ is a state on a \cstar-algebra $A$, then it defines a sesquilinear form on $A$ by $(a|b)_\varphi:=\varphi(b^*a)$. The completion of $A$ with respect to this norm is a Hilbert space $\ell_2(A,\varphi)$ (denoted by $H_\varphi$ in \cite{Fa:STCstar} and~\S\ref{subsecstates}), and the representation $\pi_\varphi$ is defined by the left multiplication. If $A$ is a \cstar-subalgebra of $B$ and $\tilde\varphi$ is a state on $B$ that extends $\varphi$, then $\ell_2(A,\varphi)$ is naturally identified with a closed subspace of $\ell_2(B,\tilde\varphi)$. 

\begin{lem}\label{L.PureExtensions}
Let $A$ be a unital \cstar-algebra with an outer automorphism $\Phi$ such that $\Phi^n=\id_A$ for some $n\geq 2$. Let $B:=A\rtimes_\Phi \bbZ/n\bbZ$, and let $u\in B$ be the unitary such that $\Phi(a)=\ad u(a)$ for every $a\in A$. Suppose that  $\varphi$ is a state (not necessarily pure) on $A$ such that $\varphi=\varphi\circ\Phi$ and that $\psi\in \st(B)$ is an extension of $\varphi$ satisfying $\psi(u)^n=1$. Then the following conclusions hold. 
\begin{enumerate}
    \item\label{1.L.PureExtension} For every $a_0,\dots,a_{n-1}\in A$, we have that
    \begin{equation*}
        \psi\left(\sum_{j<n}a_ju^j\right)=\sum_{j<n}\psi(u)^j\varphi(a_j),
    \end{equation*}
    hence $\psi$ is uniquely determined by $\varphi$ and $\psi(u)$.
	
	\item\label{2.L.PureExtension} $\ell_2(A,\varphi)=\ell_2(B,\psi)$.
	
	\item\label{3.L.PureExtension} If $\varphi$ is pure, so is $\psi$.
\end{enumerate}
\end{lem}

\begin{proof}
We first prove that if $\psi(u)$ belongs to the unit circle then   $\varphi\circ \Phi=\varphi$. For every $a\in A$, we have $\Phi(a)=uau^*$. Since $\psi$ is a state, it is  self-adjoint, hence $\psi(u^*)=\overline{\psi(u)}$. Finally, since the spectrum of $u$ is included in $\bbT$,  $\psi(u)$ is an extreme point of this spectrum, and therefore \cite[Propositon~1.7.8]{Fa:STCstar} applied twice implies   
\[
\varphi(\Phi(a))=\psi(uau^*)=\psi(u)\psi(au^*)=\psi(u)\psi(a)\psi(u^*)=\psi(a)\psi(u)\overline{\psi(u)}=\varphi(a). 
\]

(\ref{1.L.PureExtension}) Since $\psi(u)$ is an element of the unit circle and $u$ is a unitary, by \cite[Proposition 1.7.8]{Fa:STCstar}, for every continuous, complex-valued function $f$ defined on the unit circle and every $a\in A$ we have that $\psi(af(u))=f(\psi(u))\psi(a)=f(\psi(u))\varphi(a)$. 
Therefore $\psi(u^j)=\psi(u)^j$ and $\psi(au^j)=\psi(u)^j\varphi(a)$ for all $j$ and all $a\in A$. 
Thus    (\ref{1.L.PureExtension}) follows by the additivity of $\psi$.   For use in the proof of \ref{2.L.PureExtension}, we note that, since $\Phi^n=\id_A$,  every element of the crossed product $B$ is of the form $\sum_{j<n}a_ju^j$, 

(\ref{2.L.PureExtension}) By the GNS construction (see \cite[\S 1.10]{Fa:STCstar}), to prove that $\ell_2(A,\varphi)=\ell_2(B,\psi)$, it is enough to prove that for every $a\in A$ and every $k<n$, there exists some $a'\in A$ such that $\|a'-au^k\|_\psi=0$. Once proven, this will imply that a dense subspace of $\ell_2(B,\psi)$ is included in $\ell_2(A,\varphi)$,  and therefore the spaces coincide.  We claim that $a':=\psi(u)^k a$ is as desired:
\begin{align*}
    \left\|\psi(u)^k a-au^k\right\|^2_\psi&=\psi\left(\left(\psi(u)^k a-au^k\right)^*\left(\psi(u)^k a-au^k\right)\right)\\
    &=\psi\left(\left(\psi(u)^{-k}-u^{-k}\right)a^*a\left(\psi(u)^k-u^k\right)\right). 
\end{align*}
Now, $\left(\psi(u)^{-k}-u^{-k}\right)a^*a\left(\psi(u)^k-u^k\right)\leq 
\|a^*a\|\left(\psi(u)^{-k}-u^{-k}\right)\left(\psi(u)^k-u^k\right)$, and  a simple calculation using $\psi(u^k)=\psi(u)^k$ proven in (\ref{1.L.PureExtension}) gives
\[
\psi(\left(\psi(u)^{-k}-u^{-k}\right)\left(\psi(u)^k-u^k\right))=0.
\]
Since $\psi$ is positive, by the earlier calculation this implies 
$\left\|\psi(u)^k a-au^k\right\|^2_\psi=0$ as required.

(\ref{3.L.PureExtension}) If $\varphi$ is pure, then an extension $\psi$ of $\varphi$ is pure if, and only if, it is an extreme point of the set $\mathcal{E}:=\{\theta\in\st(B):\theta\text{ extends }\varphi\}$ by \cite[Lemma 5.4.1]{Fa:STCstar}. Let $g\colon\mathcal{E}\to\bC$ be given by $g(\theta):=\theta(u)$. Then  $g[\mathcal{E}]$ is included in the convex closure of the spectrum of $u$, $\textrm{co}(\textrm{sp}(u))$. Since $g$ is an affine and  weak$^*$-continuous function on a closed face of $\st(B)$,  $g[\mathcal{E}]$ is a compact, convex subset of $\textrm{co}(\textrm{sp}(u))$, and the $g$-preimage of a face of $g[\mathcal{E}]$ is a face of $\mathcal{E}$. But, being on the unit circle,  $\psi(u)$ is an extreme point of $g[\mathcal{E}]$, and   (\ref{1.L.PureExtension}) implies that $\psi$ is uniquely determined by $\psi(u)$. Therefore $g^{-1}(\{\psi(u)\})$ is a singleton. Its unique element $\psi$ is an extreme point of $\mathcal{E}$,  and therefore  a pure state by \cite[Lemma 5.4.1]{Fa:STCstar}.
\end{proof}

Lemma \ref{L.Extension} below is based on \cite[Lemma~2.7]{farah2016simple}. The key property of the CAR algebra $\CAR$ used in it is extracted in the following lemma implicit in \cite{farah2016simple}.  

\begin{lem} \label{L.ordertwo}
There are inequivalent pure states $\rho_j$, $\sigma_j$, $\eta_j$ for $j\in \bbN$ on~$\CAR$ and an automorphism $\Theta$ of $\CAR$ of order two such that the following conditions hold. 
\begin{enumerate}
\item $\sigma_j=\rho_j\circ \Theta$ and $\eta_j=\eta_j\circ \Theta$ for all $j$.  
\item $\CAR\rtimes_\Theta \bbZ/2\bbZ$ is isomorphic to $\CAR$. 
\end{enumerate}
\end{lem}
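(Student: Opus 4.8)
The plan is to use an explicit product-type automorphism. Write $\CAR=\bigotimes_{n\in\bbN}M_2(\bC)$ and fix the self-adjoint unitary $w=\left(\begin{smallmatrix}1&0\\0&-1\end{smallmatrix}\right)\in M_2(\bC)$, so that $w^2=1$ and both spectral projections $w^\pm=(1\pm w)/2$ have rank one. Define $\Theta:=\bigotimes_{n\in\bbN}\Ad(w)$, acting as $\Ad(w)$ on each tensor factor; then $\Theta^2=\id$ and $\Theta\neq\id$, so $\Theta$ has order two. Since $\Ad(w)$ fixes diagonal matrices and negates each off-diagonal matrix unit $e_{01}^{(n)}$, the automorphism $\Theta$ fixes every diagonal product state.

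For the pure states I would use product vector states together with Powers' equivalence criterion (two product pure states $\bigotimes_n\omega_{\xi_n}$ and $\bigotimes_n\omega_{\zeta_n}$ are unitarily equivalent iff $\sum_n(1-|\langle\xi_n,\zeta_n\rangle|)<\infty$). For the $\Theta$-fixed family I take diagonal product states $\eta_j:=\bigotimes_n\omega_{e_{s_j(n)}}$ for a countable family of sequences $s_j\in\{0,1\}^\bbN$ that pairwise differ in infinitely many coordinates; these are automatically fixed by $\Theta$ and are pairwise inequivalent. For the swapped pairs I take $\rho_j:=\bigotimes_n\omega_{\xi^{(j)}_n}$ with $\xi^{(j)}_n=(e_0+e_1)/\sqrt2$ on an infinite set of coordinates; since $w$ carries this vector to the orthogonal vector $(e_0-e_1)/\sqrt2$, Powers' criterion yields $\sigma_j:=\rho_j\circ\Theta\nsim\rho_j$. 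Arranging the coordinate data so that the whole list $\{\rho_j,\sigma_j,\eta_j:j\in\bbN\}$ pairwise differs in infinitely many coordinates makes it pairwise inequivalent; this bookkeeping is routine. Note that $\rho_j\nsim\rho_j\circ\Theta$ already forces $\Theta$ to be outer, which is exactly what is needed for simplicity of the crossed product.

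The heart of the proof is to show $B:=\CAR\rtimes_\Theta\bbZ/2\bbZ\cong\CAR$. Let $u\in B$ be the self-adjoint unitary implementing $\Theta$ ($u^2=1$, $uau=\Theta(a)$), set $u_N:=w_1\cdots w_N$ (the product of the copies of $w$ in the first $N$ factors), and put $z_N:=u_Nu$. A direct computation shows each $z_N$ is a self-adjoint unitary, commutes with $\bigotimes_{n\le N}M_2(\bC)$, satisfies $z_N=w_{N+1}z_{N+1}$, and implements the tail automorphism $\bigotimes_{n>N}\Ad(w)$. Since $z_N\notin\bigotimes_{n\le N}M_2(\bC)$ (otherwise $u=u_Nz_N$ would lie in $\CAR$, contradicting outerness), we get $B_N:=\langle\bigotimes_{n\le N}M_2(\bC),z_N\rangle\cong M_{2^N}\oplus M_{2^N}$, with central projections $p^{(N)}_\pm=(1\pm z_N)/2$. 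As $u=u_Nz_N\in B_N$ for every $N$ and $\bigcup_N\bigotimes_{n\le N}M_2(\bC)$ is dense in $\CAR$, the union $\bigcup_N B_N$ is dense in $B$, so $B$ is AF. Using $z_N=w_{N+1}z_{N+1}$ and the rank-one projections $w^\pm$ in the $(N+1)$-st factor, one computes $p^{(N)}_+=w^+q_++w^-q_-$ and $p^{(N)}_-=w^-q_++w^+q_-$, where $q_\pm=(1\pm z_{N+1})/2$; reading off multiplicities, the connecting map $K_0(B_N)=\bbZ^2\to\bbZ^2=K_0(B_{N+1})$ is $\left(\begin{smallmatrix}1&1\\1&1\end{smallmatrix}\right)$ with order unit $(1,1)$ at each stage. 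The inductive limit of this system is $(\bbZ[1/2],\bbZ[1/2]_+,1)=K_0(\CAR)$, and the connecting matrix being strictly positive makes it simple; by Elliott's classification of AF algebras, $B\cong\CAR$.

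The main obstacle is this last computation: pinning down the finite-stage crossed products and the exact connecting multiplicities so that the two eigenspaces of $w$ contribute equally and the limit collapses to $\bbZ[1/2]$. The delicacy is real, since the naive tensor-flip automorphism on $\CAR\cong\CAR\otimes\CAR$ instead produces connecting matrices $\left(\begin{smallmatrix}3&1\\1&3\end{smallmatrix}\right)$, whose limit is a simple dimension group with nonzero infinitesimals and hence is \emph{not} $\CAR$. I would also flag that the self-similarity $B\cong M_2(\bC)\otimes B$ obtained by splitting off the first factor does not by itself identify $B$ (it is satisfied by many UHF-absorbing algebras), so the $K_0$ computation cannot be bypassed.
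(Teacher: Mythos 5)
Your proof is correct, and its skeleton is the same as the paper's: both take a product-type symmetry $\Theta=\bigotimes_n\Ad(u_n)$ with each $u_n$ a self-adjoint unitary, choose product states whose behaviour under $\Theta$ can be read off factorwise, and identify $\CAR\rtimes_\Theta\bbZ/2\bbZ$ with $\CAR$ by the classification of AF algebras. The difference is in how the ingredients are certified. The paper's proof is essentially an assembly of citations to \cite{farah2016simple}: the distinguished states are the ``separated'' product states of that paper, the unitaries $u_n\in M_{2^n}(\bbC)$ are imported from the proof of its Lemma~2.7, and the AF-classification step is quoted rather than performed. You instead work with the decomposition $\CAR=\bigotimes_\bbN M_2(\bbC)$, the single symmetry $w=\mathrm{diag}(1,-1)$ in every factor, Powers' criterion for (in)equivalence of product pure states, and an explicit computation of the finite stages $B_N\cong M_{2^N}\oplus M_{2^N}$ and the connecting maps $\left(\begin{smallmatrix}1&1\\1&1\end{smallmatrix}\right)$, whose limit is $(\bbZ[1/2],\bbZ[1/2]_+,1)$, so Elliott's theorem applies. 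The individual steps check out: $z_N$ commutes with $\bigotimes_{n\le N}M_2(\bbC)$ because $\Theta$ fixes $u_N$, so $u$ and $u_N$ commute; $z_N\notin\bigotimes_{n\le N}M_2(\bbC)$ does follow from outerness (a central self-adjoint unitary in a full matrix algebra would be $\pm1$, putting $u$ in $\CAR$); and outerness itself follows from $\rho_j\circ\Theta\nsim\rho_j$ as you say. What the paper's route buys is brevity and alignment with the framework of \cite{farah2016simple}, whose conditions (6)--(8) it reuses; what yours buys is a self-contained, verifiable argument, plus the genuinely useful sanity check that equal eigenspace multiplicities of $w$ are essential --- with unequal multiplicities (e.g., the flip, giving matrices $\left(\begin{smallmatrix}3&1\\1&3\end{smallmatrix}\right)$) the limiting dimension group acquires infinitesimals and the crossed product is not $\CAR$.
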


\begin{proof}
Identify $\CAR$ with $\bigotimes_\bbN A_n$, where $A_n\cong M_{2^n}(\bbC)$. Let $\varphi_j$, for $j\in \bbN$, be a family of separated product states of $\CAR$ (see \cite[Definition 2.5]{farah2016simple}). The existence of such family is guaranteed by \cite[Lemma 2.6, (1) implies (2)]{farah2016simple}. Let $u_n$ be a self-adjoint unitary in $A_n$ as defined in the proof of \cite[Lemma 2.7]{farah2016simple}, so that for every $n$ the projections in $A_n$ separating the pure states satisfy the analogues of conditions (6)--(8). Let $\Theta:=\bigotimes_\bbN \Ad u_n$. Then the action of $\Theta$ on the distinguished pure states is as required. To complete the proof, note that as in \cite[Lemma 2.7]{farah2016simple}, the classification of AF algebras implies that $\CAR\rtimes_\Theta \bbZ/2\bbZ$ is isomorphic to $\CAR$. 
\end{proof}

\begin{lem}\label{L.Extension}
Suppose that $X$, $Y$, and $Z$ are disjoint finite sets of pure states of $\CAR$ and $\sfF\colon X\to Y$ is a bijection. Then there are $\bar\varphi$ and $\bar\psi$ in $\pure_{m+l}(\CAR)$ (where $m=|X|$ and $l=|Z|$), and $\Theta\in \Aut (\CAR)$ such that $\bE_{\CAR^\circ}(\bar\varphi,\bar\psi)$ forces the following: 
\begin{enumerate}
    \item\label{1.L.Extension} With $\Theta_\kG:=\Phi_\kG\circ\Theta\circ\Phi_\kG^{-1}$, we have that $B:=\CAR\rtimes_{\Theta_\kG}\bZ/2\bZ$ is isomorphic to $\CAR$. 
    
    \item \label{2.L.Extension} Every $\eta\in Z$ has exactly two pure state extensions, denoted $\eta_{+1}$ and $\eta_{-1}$, to $B$, and $\ell_2(A,\eta)=\ell_2(B,\eta_{\pm 1})$.  
    
    \item\label{3.L.Extension} For every $\eta\in X$, $\eta$ and $F(\eta)$ have unique pure state extensions to $B$, and these extensions are equivalent. 
\end{enumerate}
\end{lem}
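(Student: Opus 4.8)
The plan is to realize the prescribed action on $X\cup Y\cup Z$ as the conjugate, by a generic $\Phi_\kG$, of the concrete order-two automorphism supplied by Lemma~\ref{L.ordertwo}. First I would fix $\Theta$ together with the inequivalent pure states $\rho_j,\sigma_j,\eta_j$ ($j\in\bbN$) from Lemma~\ref{L.ordertwo}, so that $\sigma_j=\rho_j\circ\Theta$, $\eta_j=\eta_j\circ\Theta$, $\Theta^2=\id$, and $\CAR\rtimes_\Theta\bbZ/2\bbZ\cong\CAR$. Enumerating $X=\{x_1,\dots,x_m\}$ and $Z=\{z_1,\dots,z_k\}$, I would let $\bar\varphi$ list $(x_j)_{j<m}$, then $(\sfF(x_j))_{j<m}$, then $(z_j)_{j<k}$, and let $\bar\psi$ list $(\rho_j)_{j<m}$, then $(\sigma_j)_{j<m}$, then $(\eta_j)_{j<k}$. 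Since $X$, $Y$, $Z$ consist of pairwise inequivalent pure states and the $\rho_j,\sigma_j,\eta_j$ are pairwise inequivalent, both tuples lie in $\pure_{m+l+k}(\CAR)$ (so the $Z$-block lengthens the tuples by $k=|Z|$). Taking $\bE_{\CAR^\circ}(\bar\varphi,\bar\psi)$ as the forcing, Theorem~\ref{sumup} produces the approximately inner $\Phi_\kG$ with $\bar\varphi\circ\Phi_\kG=\bar\psi$.

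The heart of the argument is the computation of $\Theta_\kG=\Phi_\kG\circ\Theta\circ\Phi_\kG^{-1}$ on these distinguished states. Since $z_j\circ\Phi_\kG=\eta_j$ and $\eta_j\circ\Theta=\eta_j$, I obtain $z_j\circ\Theta_\kG=\eta_j\circ\Theta\circ\Phi_\kG^{-1}=\eta_j\circ\Phi_\kG^{-1}=z_j$, so every $\eta\in Z$ is \emph{exactly} fixed by $\Theta_\kG$. Likewise, from $x_j\circ\Phi_\kG=\rho_j$, $\sfF(x_j)\circ\Phi_\kG=\sigma_j$, and $\rho_j\circ\Theta=\sigma_j$, I get $x_j\circ\Theta_\kG=\rho_j\circ\Theta\circ\Phi_\kG^{-1}=\sigma_j\circ\Phi_\kG^{-1}=\sfF(x_j)$, and then $\sfF(x_j)\circ\Theta_\kG=x_j$ by $\Theta_\kG^2=\id$. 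Being conjugate to $\Theta$, the automorphism $\Theta_\kG$ is again outer of order two.

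The three conclusions then follow. For the first, conjugation by $\Phi_\kG$ gives $\CAR\rtimes_{\Theta_\kG}\bbZ/2\bbZ\cong\CAR\rtimes_\Theta\bbZ/2\bbZ$, which is $\cong\CAR$ by Lemma~\ref{L.ordertwo}; this isomorphism of separable AF algebras persists in $M[\kG]$. For the second, each $\eta\in Z$ satisfies $\eta=\eta\circ\Theta_\kG$, so Lemma~\ref{L.PureExtensions} applies verbatim with its $\Phi$ replaced by $\Theta_\kG$: $\eta$ has exactly two pure extensions $\eta_{+1},\eta_{-1}$, determined by the $\pm1$ value of the implementing unitary, and $\ell_2(\CAR,\eta)=\ell_2(B,\eta_{\pm1})$. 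For the third, $x\circ\Theta_\kG=\sfF(x)\nsim x$ shows via \cite[Proposition~5.4.7]{Fa:STCstar} that each of $x$ and $\sfF(x)$ has a unique pure extension, and since $\sfF(x)\circ\Theta_\kG=x$, the equivalence criterion of \cite[Theorem~5.4.8]{Fa:STCstar} (the case $\Gamma=\bbZ/2\bbZ$) identifies the two extensions.

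The main obstacle is that Lemma~\ref{L.PureExtensions} demands the exact identity $\eta=\eta\circ\Theta_\kG$, not merely $\eta\sim\eta\circ\Theta_\kG$, which is what forces every $z\in Z$ to be pinned to a genuine $\Theta$-fixed state $\eta_j$ through the forcing tuples (and symmetrically pins both members of each merging pair). Indeed, any pure state left out of $\bar\varphi$ is moved by $\Theta_\kG$ to a fresh inequivalent state by Theorem~\ref{preglimm}, hence acquires a \emph{unique} extension; this is exactly the behaviour one must avoid for the splitting states of $Z$, and it is the structural reason the $Z$-coordinates cannot be omitted from the tuples. Note that the forcing is used here only to manufacture the single automorphism $\Phi_\kG$ carrying $\bar\varphi$ to $\bar\psi$; genericity beyond this plays no role in the three stated conclusions, although it is what makes this lemma usable inside the global construction.
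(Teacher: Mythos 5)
Your proposal is correct and follows essentially the same route as the paper's proof: match $X$, $\sfF[X]$, and $Z$ against the states $\rho_j$, $\sigma_j$, $\eta_j$ of Lemma~\ref{L.ordertwo} via the forcing tuples, so that conjugating $\Theta$ by $\Phi_\kG$ yields $\eta\circ\Theta_\kG=\eta$ for $\eta\in Z$ and $x\circ\Theta_\kG=\sfF(x)$ for $x\in X$, and then invoke Lemma~\ref{L.PureExtensions} and the unique-extension/equivalence criteria for the crossed product. If anything, your explicit computation of $\Theta_\kG$ on the distinguished states (and your remark on why the $Z$-block must be carried in the tuples, via Theorem~\ref{preglimm}) is stated more carefully than the corresponding step in the paper.
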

 
\begin{proof}
For convenience, we write $A:=\CAR$. The plan is to match the pure states in $X$, $Y$, and $Z$ to those provided by Lemma \ref{L.ordertwo} and import $\Theta$ from there. More specifically, fix $\rho_j$, $\sigma_j$, $\eta_j$, and $\Theta$ as guaranteed by Lemma \ref{L.ordertwo}. Enumerate $X$ as $\varphi_j$, for $j<m$, and let $\varphi_{m+j}:=\sfF(\varphi_j)$ for $j<m$. Enumerate $Y$ as $\varphi_{2m+j}$, for $j<l$. Now let $\psi_j:=\rho_j$ and $\psi_{m+j}:=\sigma_j$ if $j<m$, and let $\psi_{2m+j}:=\eta_j$ if $j<k$. 
\begin{enumerate}
    \item Since $\Theta_\kG$ is conjugate to $\Theta$, we have $B\cong\CAR$.
    
    \item Fix $\eta\in Z$. Then $\eta=\varphi_i=\psi_i$ for some $i$, and therefore Lemma \ref{L.PureExtensions} implies that $\eta$ has exactly two pure state extensions, $\eta_{\pm 1}$, to $B$ and that $\ell_2(A,\eta)=\ell_2(B,\eta_{\pm 1})$.
    
    \item If $\eta\in X$ and $\zeta:=\sfF(\eta)$, then $\varphi_i=\eta$ and $\psi_i=\zeta$ for some $i$. Theorem \ref{agap} implies that $\eta$ and $\zeta$ have unique pure state extensions to $B$, and Theorem \ref{sumup} implies that they are equivalent.
\end{enumerate}
This concludes the proof.  
\end{proof}

\section{A proof of Theorem~\ref{C.separable}, part II: The diagonalization argument}

In this section, we prove that $\diacoh+\ax{CH}$ implies that the conclusion of Glimm's dichotomy fails even for separably represented \cstar-algebras. More precisely, we prove the following: 

\begin{theo} \label{T.separable}
Assume $\diacoh+\ax{CH}$. For every $m\geq 0$ and every $n\geq 1$ there exists a simple, unital \cstar-algebra of density character $\aleph_1$ with exactly $m+n$ unitary equivalence classes of irreducible representations such that $m$ of these representations are on a separable Hilbert space and $n$ of these representations are on a non-separable Hilbert space. 
\end{theo}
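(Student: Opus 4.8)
The plan is to run a transfinite recursion of length $\aleph_1$ in the spirit of the proof of Theorem \ref{realmain}, but driving the induction by the order-two crossed products of Lemma \ref{L.Extension} rather than by $\bZ$-crossed products, so that every countable stage $A_\alpha$ stays isomorphic to $\CAR$ (by Lemma \ref{L.Extension}\eqref{1.L.Extension}, with the inductive limit of copies of $\CAR$ at countable limit stages being again $\CAR$) and the separability of the distinguished representations can be tracked. I would fix at stage $0$ a copy $A_0=\CAR$ together with $n$ inequivalent pure states $\varphi_0,\dots,\varphi_{n-1}$, the future \emph{non-separable} representations (``sinks''), and $m$ further inequivalent pure states $\theta_0,\dots,\theta_{m-1}$, the future \emph{separable} representations. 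Exactly as in Theorem \ref{realmain}, I would use the $\diacoh$-chain together with a $\ax{CH}$-bookkeeping surjection $f\colon\aleph_1\to H_{\aleph_1}$ listing every object cofinally often to decide, at each successor stage, which pure state to process.

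The separability dichotomy is controlled by the GNS space: a pure state $\psi$ of $A_{\aleph_1}=\varinjlim_\alpha A_\alpha$ is separably represented if and only if $\ell_2(A_\alpha,\psi\rs A_\alpha)=\ell_2(A_{\aleph_1},\psi)$ for some $\alpha<\aleph_1$, i.e. its GNS space \emph{freezes} at a countable stage, since each $A_\alpha$ is separable while a strictly increasing $\aleph_1$-chain of separable subspaces has non-separable closure. The crucial tool is the frozen-extension clause Lemma \ref{L.Extension}\eqref{2.L.Extension} (equivalently Lemma \ref{L.PureExtensions}\eqref{2.L.PureExtension}): if a target is kept as a \emph{fixed} state (a member of the set $Z$) of the order-two automorphism, then its pure extensions to the crossed product satisfy $\ell_2(A,\eta)=\ell_2(B,\eta_{\pm1})$, so its GNS space never grows; whereas a state absorbed through the matching $\sfF\colon X\to Y$ of Lemma \ref{L.Extension}\eqref{3.L.Extension} receives a unique extension whose GNS space is the direct sum $\ell_2(A,\eta)\oplus\ell_2(A,\sfF(\eta))$, so the class it is merged into grows.

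Concretely, at each successor stage I would (i) place the $m$ currently-active separable branches into the fixed set $Z$, so that each splits into two frozen pure extensions, one of which is designated the new active branch; and (ii) use the matching $\sfF$ to absorb into the sinks both the generic pure state dictated by $f$ and the superfluous branches produced by the splittings at earlier stages, by matching each such state to a distinct pure state already lying in a sink class. Since $n\geq1$ there is always a sink class available (and each sink class is infinite, supplying enough distinct targets for the bijection $\sfF$), and a bookkeeping argument — scheduling each of the $\aleph_1$-many generic states and superfluous branches to be absorbed at some stage, exactly as $f$ is used in Theorem \ref{realmain} — guarantees that at the limit every pure state of $A_{\aleph_1}$ inequivalent to the $m+n$ distinguished ones has been made equivalent to some $\varphi_i$. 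Simplicity, unitality, and non-type I-ness of $A_{\aleph_1}$ follow from Corollary \ref{bacan} and the absoluteness results of Lemma \ref{absolute}, and density character $\aleph_1$ follows from the length of the recursion. The reflection/stationarity argument of Theorem \ref{realmain} (via $\diacoh$\eqref{2.diacoh} and Lemma \ref{L.LST}) shows there are no unexpected identifications, so there are exactly $m+n$ classes: the $\theta_j$-classes are separable because their GNS spaces were frozen from a countable stage on, and the $\varphi_i$-classes are non-separable because each is arranged to absorb cofinally many states, forcing its GNS space up to density character $\aleph_1$.

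The main obstacle is reconciling two competing demands on the separable targets. To prevent $\ell_2$ from growing, a target must remain a \emph{fixed} point of the order-two automorphisms; but by Lemma \ref{L.PureExtensions} a fixed pure state necessarily acquires \emph{two} pure extensions at each step, so naively the number of separable classes would double at every stage, while the only alternative — moving the target to an inequivalent state — gives it a unique extension whose GNS space doubles and, if repeated cofinally, destroys separability. The heart of the argument is therefore the scheduling that, at every stage, keeps each separable target fixed (hence frozen) yet re-absorbs precisely one of its two frozen branches into a non-separable sink, so that exactly $m$ frozen separable classes survive to $A_{\aleph_1}$. Verifying that this absorption can be carried out simultaneously with the absorption of all generic pure states, and that each surviving $\varphi_i$ nonetheless absorbs cofinally often (so as to be genuinely non-separable), is where the real work lies.
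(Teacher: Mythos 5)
Your proposal is, in outline, the paper's own proof: the same length-$\aleph_1$ recursion through copies of $\CAR$ driven by Lemma \ref{L.Extension}, with the $m$ separable targets kept in the fixed set $Z$ at every nontrivial stage so that Lemma \ref{L.PureExtensions} freezes their GNS spaces, one branch of each splitting designated to survive, and the $\ax{CH}$-bookkeeping, the $\diacoh$-chain, and the reflection argument of Theorem \ref{realmain} giving the final count. There is, however, one concrete flaw, located exactly where you say ``the real work lies.'' You propose to absorb several states at a single stage by ``matching each such state to a distinct pure state already lying in a sink class,'' justified by the remark that each sink class is infinite. But the sets $X$, $Y$, $Z$ fed into Lemma \ref{L.Extension} are enumerated into tuples that must belong to $\pure_{m+l}(\CAR)$, i.e., consist of \emph{pairwise inequivalent} pure states: the forcing $\bE_{\CAR^\circ}(\bar\varphi,\bar\psi)$ is defined only for such tuples, and the Kishimoto--Ozawa--Sakai machinery behind Lemma \ref{goodlemma} requires it. So a single stage can use at most one target from each sink class; in particular, when $n=1$ you cannot absorb the $f$-dictated state and a leftover branch at the same stage, so your per-stage schedule cannot be carried out as written.

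The repair is exactly the paper's design, which also makes the scheduling you worry about unnecessary. The paper absorbs precisely one state per stage (the one dictated by $f$), always into the class of $\varphi_m$; a superfluous frozen branch receives no attention at the stage where it is created. It is simply an ordinary pure state inequivalent to the distinguished ones, it retains unique pure state extensions at later stages (Corollary \ref{uepsiete}), and since $f$ lists it cofinally often it is merged into $\varphi_m$ later on --- postponing such tasks is precisely the flexibility that distinguishes a $\diacoh$-construction from a standard $\diamondsuit$-construction, as the paper emphasizes in \S\ref{S.diamond}. Relatedly, your non-separability mechanism (each sink must absorb cofinally often, forcing a rotation of targets through all $n$ sinks) is more laborious than the paper's: any pure state with a unique --- equivalently, non-fixed --- pure extension to an order-two crossed product has strictly larger GNS space (your own formula $\ell_2(B,\tilde\eta)\cong\ell_2(A,\eta)\oplus\ell_2(A,\eta\circ\Theta_\kG)$ shows this), so every sink grows at every nontrivial stage whether or not it absorbs anything, and only $\varphi_m$ ever needs to absorb.
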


A simple \cstar-algebra with irreducible representations on both separable and non-separable Hilbert spaces can be constructed in $\ax{ZFC}$ (see \cite[Theorem~10.4.3]{Fa:STCstar}). Both this \cstar-algebra and the one in Theorem \ref{T.separable} are inductive limits of inductive systems of \cstar-algebras all of which are isomorphic to the CAR algebra.

\begin{proof}
With Lemma \ref{L.Extension} at our disposal, this proof is analogous to that of Theorem \ref{realmain}. Fix a $\diacoh$-chain $(M_\alpha:\alpha<\aleph_1)$. Using the Continuum Hypothesis, fix a surjection $f\colon\aleph_1\to H(\aleph_1)$ such that every element of $H(\aleph_1)$ is listed cofinally often. By recursion on $\alpha<\aleph_1$, we will define an inductive system of separable, simple, unital, non-type I $\C$-algebras, $A_\alpha$. For each $A_\alpha$ we will have a distinguished $(m+n)$-tuple of inequivalent pure states, $(\varphi^\alpha_i:i<m+n)$. 

Let $A_0$ be the CAR algebra with inequivalent pure states $\varphi^0_i$, for $i<m+n$. At the latter steps of the construction, we will assure that for all $\alpha<\aleph_1$ the following conditions hold: 
\begin{enumerate}
    \item If $\xi<\alpha$ then $A_\xi$ is a unital \cstar-subalgebra of $A_\alpha$.
    \item\label{x.2.main} With $\gamma(\alpha):=\min\{\gamma:A_\alpha\in M_\gamma\}$, $A_{\alpha+1}$ belongs to $M_{\gamma(\alpha)+1}$.
 	\item\label{x.3.main} Every pure state $\psi$ of $A_\alpha$ that belongs to $M_{\gamma(\alpha)}$, except $\varphi^\alpha_i$, for $i<m$, has a unique pure state extension to $A_{\alpha+1}$.
 	\item\label{x.4.main} If $f(\alpha)$ is a code for a pair $(A_\xi,\psi)$, where $\xi<\alpha$, $\psi$ is a pure state of $A_\xi$ which has a unique pure state extension $\tilde\psi$ to $A_\alpha$, and $\tilde\psi$ is inequivalent to $\varphi^\alpha_i$ for all $i<m+n$, then $\psi$ has a unique pure state extension to $A_{\alpha+1}$, and this extension is equivalent to $\varphi^{\alpha+1}_m$.  
	\item\label{x.5.main} For all $i<m$, $\varphi_i^{\alpha+1}$ extends $\varphi_i^\alpha$ and $\ell_2(A_{\alpha+1},\varphi^{\alpha+1}_i)=\ell_2(A_\alpha,\varphi^\alpha_i)$.\footnote{For the notation, see the discussion preceding Lemma \ref{L.PureExtensions}.}  
\end{enumerate}

In order to describe the recursive construction, suppose that $\beta$ is a countable ordinal such that $A_\alpha$ as required has been defined for all $\alpha<\beta$. As in the proof of Theorem \ref{realmain}, the interesting case is when $\beta=\alpha+1$ for some $\alpha$ and $f(\alpha)$ is a code for a pair $(A_\xi,\psi)$ that satisfies the following conditions:
\begin{enumerate}
    \item[(a)] $\xi<\alpha$. 
    \item[(b)] $\psi$ is a pure state of $A_\xi$ that has a unique extension $\tilde\psi$ to a pure state of $A_\alpha$. 
    \item[(c)] For all $i<m+n$, $\tilde\psi$ is inequivalent to $\varphi^\alpha_i$.
\end{enumerate}

By the second part of Theorem \ref{sumup}, any forcing notion of the form $\bE_{A^\circ_\alpha}(\bar\rho,\bar\sigma)$ is forcing-equivalent to the forcing notion for adding a single Cohen real. Since $M_{\gamma(\alpha)+1}$ contains a real that is Cohen-generic over $M_{\gamma(\alpha)}$, by \cite[Lemma IV.4.7]{Ku:Set}, it contains an $M_{\gamma(\alpha)}$-generic filter for any forcing notion of this form. Therefore, Lemma \ref{L.Extension} implies that in $M_{\gamma(\alpha)+1}$ there exists an automorphism $\Theta_\kG$ of $A_\alpha$ of order two and such that the \cstar-algebra $A_{\alpha+1}:=A_\alpha\rtimes_{\Theta_\kG}\bZ/2\bZ$ is isomorphic to the CAR algebra, each $\varphi^\alpha_i$ for $m\leq i<m+n$ has a unique pure state extension to $A_{\alpha+1}$, $\psi$ has a unique pure state extension to $A_{\alpha+1}$ equivalent to\footnote{Note that $n\geq 1$, hence $\varphi^\alpha_m$ is well-defined for every $\alpha<\aleph_1$.} $\varphi^{\alpha+1}_m$, and $\varphi^\alpha_i\circ \Theta_\kG =\varphi^\alpha_i$ for $i<m$. Lemma \ref{L.PureExtensions} implies that, for $i<m$, $\varphi^\alpha_i$ has a pure state extension $\varphi^{\alpha+1}_i$ to $A_{\alpha+1}$ that satisfies $\ell_2(A_\alpha,\varphi^\alpha_i)=\ell_2(A_{\alpha+1},\varphi_i^{\alpha+1})$. By Corollary \ref{uepsiete}, any other pure states of $A_\alpha$ that belongs to $M_{\gamma(\alpha)}$ has a unique pure state extension to $A_{\alpha+1}$. Also, $A_{\alpha+1}$ is separable, simple, unital, and non-type I by Corollary \ref{bacan}. 

If $f(\alpha)$ does not satisfy the conditions (a)--(c), let $A_{\alpha+1}:=A_\alpha$ and, for each $i<m+n$, let $\varphi^{\alpha+1}_i:=\varphi^\alpha_i$. 

If $\beta$ is a limit ordinal, take $A_\beta:=\varinjlim_{\alpha<\beta} A_\alpha$ and, for each $i<m+n$, define $\varphi_i^\beta$ as the unique pure state of $A_\beta$ that extends $\varphi^\alpha_i$ for all $\alpha<\beta$. Since $\varphi^\beta_i$ is definable from its restrictions, it belongs to the relevant model.

This describes the recursive construction.

Let $A_{\aleph_1}:=\varinjlim_{\alpha<\aleph_1}A_\alpha$, and for $i<m+n$ let $\varphi_i$ be the unique pure state of $A_{\aleph_1}$ that extends $\varphi^\alpha_i$ for all $\alpha<\aleph_1$. 

By the construction, the pure states $\varphi_i$, for $i<m+n$, are inequivalent. By \eqref{x.5.main} and induction, for $i<m$, we have $\ell_2(A_{\aleph_1},\varphi_i)=\ell_2(A_0,\varphi_i^0)$, and therefore the GNS Hilbert space associated with $\varphi_i$ is separable. If $m\leq i<m+n$, then $\varphi^{\alpha+1}_i$ is the unique extension of $\varphi^\alpha_i$ and therefore Lemma \ref{L.Extension} implies that $\ell_2(A_\alpha,\varphi^\alpha_i)$ is a proper subspace of $\ell_2(A_{\alpha+1},\varphi^{\alpha+1}_i)$ for all $\alpha<\aleph_1$. Therefore, the GNS Hilbert space associated with $\varphi_i$ is non-separable.

It only remains to prove that every pure state of $A_{\aleph_1}$ is equivalent to some $\varphi_i$, but this proof is analogous to the corresponding proof in Theorem \ref{realmain} and therefore omitted.
\end{proof}

The reader may wonder whether it is possible to sharpen the conclusion of Theorem \ref{T.separable} and obtain a simple, unital, infinite-dimensional \cstar-algebra $A$ with at most $m\leq \aleph_0$ irreducible representations up to unitary equivalence such that every irreducible representation of $A$ is on a separable Hilbert space. The answer is well-known to be negative in the case when $m=1$ (it is Rosenberg's result, see e.g., \cite[Corollary~5.5.6]{Fa:STCstar}, that a counterexample to Naimark's problem cannot be separably represented). A proof analogous to that of Rosenberg's result provides a negative answer in the general case. 
 
\begin{prop}
Suppose that $A$ is a non-type I \cstar-algebra all of whose irreducible representations are on a separable Hilbert space. Then $A$ has at least $2^{\aleph_0}$ spatially inequivalent irreducible representations. 
\end{prop}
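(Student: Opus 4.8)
The plan is to exhibit $2^{\aleph_0}$ pure states of $A$ whose GNS representations are pairwise inequivalent, using non-type-I-ness to produce many pure states on a separable subalgebra and using the hypothesis that every irreducible representation of $A$ is separable to control how badly these states can be identified inside $A$. First I would invoke Glimm's theorem (exactly as in the proof of Proposition~\ref{newpure}) to fix a \emph{separable} \cstar-subalgebra $B\subseteq A$ carrying a quotient map $q\colon B\to\CAR$; in particular $B$ is non-type I. Pulling back $2^{\aleph_0}$ pairwise inequivalent product states of $\CAR$ (as constructed in the proof of Proposition~\ref{newpure}) along $q$ yields pure states $\psi_r\in\pure(B)$, for $r$ in an index set $R$ with $|R|=2^{\aleph_0}$, whose GNS representations $\sigma_r$ are irreducible and pairwise inequivalent (since $q$ is onto and the product states are). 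Extending each $\psi_r$ to a pure state $\tilde\psi_r$ of $A$ — pure states extend to pure states — I obtain irreducible representations $\pi_{\tilde\psi_r}$ of $A$, each, by hypothesis, on a separable Hilbert space. The aim is to show that the assignment $r\mapsto[\pi_{\tilde\psi_r}]$ into spatial-equivalence classes of irreducible representations of $A$ has countable fibres.

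The technical heart is the following lemma, which I expect to be the main obstacle: \emph{if $\rho\colon B\to\sB(K)$ is a representation on a separable Hilbert space $K$, then $\rho$ has at most countably many pairwise inequivalent irreducible subrepresentations.} I would prove this by passing to the commutant $N:=\rho(B)'$. An irreducible subrepresentation corresponds to a minimal (abelian) projection $p\in N$, two such subrepresentations are unitarily equivalent precisely when the corresponding projections are Murray--von Neumann equivalent in $N$, and the central support $c(p)$ of a minimal projection is an atom of the centre $Z(N)$, with equivalent minimal projections sharing the same central support. Hence the inequivalent irreducible subrepresentations inject into the atoms of $Z(N)$. Since $Z(N)$ is an abelian von Neumann algebra acting on the separable space $K$, its atoms form a family of pairwise orthogonal nonzero projections and are therefore countable. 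This is exactly the point at which separability is converted into a cardinality bound; the guiding intuition is that in a direct-integral picture a diffuse central measure contributes \emph{no} irreducible subrepresentations, only atoms do.

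With the lemma in hand the assembly is routine. Restricting $\pi_{\tilde\psi_r}$ to $B$ gives a representation on a separable Hilbert space, and the cyclic subspace $\overline{\pi_{\tilde\psi_r}(B)\xi_r}$ generated by the GNS vector $\xi_r$ carries a copy of $\sigma_r$; thus $\sigma_r$ is an irreducible subrepresentation of $\pi_{\tilde\psi_r}\rs B$. If $\pi_{\tilde\psi_r}$ and $\pi_{\tilde\psi_{r'}}$ are spatially equivalent, then (the implementing $^*$-isomorphism of $\sB$-algebras being spatial, hence given by a unitary $U$ of the underlying Hilbert spaces, so that $\pi_{\tilde\psi_{r'}}=\Ad U\circ\pi_{\tilde\psi_r}$) their restrictions to $B$ are unitarily equivalent, whence both $\sigma_r$ and $\sigma_{r'}$ occur as irreducible subrepresentations of the single separable representation $\pi_{\tilde\psi_r}\rs B$. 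As the $\sigma_r$ are pairwise inequivalent, the lemma bounds each fibre of $r\mapsto[\pi_{\tilde\psi_r}]$ by $\aleph_0$. Since $2^{\aleph_0}=\aleph_0\cdot\lambda$ forces $\lambda=2^{\aleph_0}$, the image has cardinality $2^{\aleph_0}$; that is, $A$ has at least $2^{\aleph_0}$ pairwise spatially inequivalent irreducible representations, as required. The special case of a single equivalence class recovers Rosenberg's theorem.
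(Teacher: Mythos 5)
Your proof is correct, but it takes a genuinely different route from the paper's. The paper also starts from Glimm's theorem, but uses it differently: it lifts a single positive contraction $a_0\in\CAR$ with Cantor spectrum (the identity function on the Cantor set, sitting in the diagonal masa) to a self-adjoint $a\in A$ whose spectrum contains a perfect set, and then picks, for each $x$ in that spectrum, a pure state $\varphi_x$ of $A$ extending the point evaluation at $x$, so that the GNS cyclic vector is an $x$-eigenvector of $\pi_{\varphi_x}(a)$. Separability is then converted into a cardinality bound in the most elementary way possible: eigenvectors for distinct eigenvalues are orthogonal, so in any irreducible representation $\pi$ on a separable space the operator $\pi(a)$ has only countably many eigenvalues, whence each spatial equivalence class can contain $\pi_{\varphi_x}$ for at most countably many $x$. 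Note that the paper's argument never needs the states $\varphi_x$ to be pairwise inequivalent---the eigenvalue ``tags'' do all the work---and it uses no von Neumann algebra theory. Your route, by contrast, needs two additional (standard, and true) inputs that you glossed over: first, product states of $\CAR$ indexed by $r,s\in 2^{\bbN}$ are unitarily equivalent exactly when $r$ and $s$ agree modulo finite sets, so extracting $2^{\aleph_0}$ pairwise \emph{inequivalent} ones requires choosing one representative per class of that equivalence relation; second, in your key lemma the map from equivalence classes of minimal projections of $N$ to atoms of $Z(N)$ must be \emph{injective}, i.e., two minimal projections with the same central support are Murray--von Neumann equivalent (comparison theorem)---you stated only the converse, well-definedness, direction. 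What your approach buys is a clean, reusable structural fact (a representation of a \cstar-algebra on a separable Hilbert space has at most countably many pairwise inequivalent irreducible subrepresentations), which isolates conceptually exactly what separability contributes; what the paper's approach buys is brevity and elementarity, since a single operator and the orthogonality of its eigenvectors replace the commutant, central supports, and atoms.
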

 
\begin{proof}
The assumption on $A$ is used only to prove that it has a self-adjoint element~$a$ whose spectrum is a perfect set. In $\CAR$, there exists a positive contraction $a_0$ with this property. To see this, note that the diagonal masa (i.e., the maximal abelian subalgebra that is the inductive limit of the algebras of diagonal matrices) is isomorphic to the algebra of continuous functions on the Cantor space, and let $a_0$ correspond to the identity map on the Cantor space via the continuous functional calculus. By Glimm's theorem  (\cite[Theorem~1.7.2]{Fa:STCstar}, but see also \cite{Fa:Errata}), $A$ has a subalgebra whose quotient is isomorphic to $\CAR$. Let $a$ be a self-adjoint lift of $a_0$ to $A$ (it exists by~\cite[\S 2.5]{Fa:STCstar}). Then the spectrum of $a$ includes the spectrum of $a_0$, hence $a$ is as required. 
  
For every element $x$ of the spectrum of $a$, fix a pure state $\varphi_x$ on $A$ such that $\varphi_x(a)=x$. We can take $\varphi_x$ to be a pure state extension of the point-evaluation at $x$. Then the cyclic vector is an $x$-eigenvector of $\pi_x(a)$, where $\pi_x$ is the representation associated to $\varphi_x$. Since the eigenvectors corresponding to distinct eigenvalues are orthogonal, in every irreducible representation $\pi$ of $A$ the operator $\pi(a)$ has only countably many eigenvectors. By a counting argument, $A$ has at least $2^{\aleph_0}$ spatial equivalence classes of irreducible representations. 
\end{proof}

\section{Concluding Remarks} 
 
Our title was inspired by the title of the ground-breaking Shelah's paper \cite{Sh:Can}, but the answers to the questions posed in these titles are quite different. Solovay's inaccessible may or may not be taken away depending on whether one requires the Baire-measurability alone, or the Lebesgue-measurability as well. In our case, the $\diamondsuit_{\aleph_1}$ is not necessary for the construction. The question whether a counterexample to Naimark's problem can be constructed in $\ax{ZFC}$ alone, in $\ax{ZFC}+\ax{CH}$, or using $\diamondsuit_\kappa$ for some $\kappa\geq\aleph_2$, remains open.

Around 2010, the senior author conjectured that Naimark's problem has positive answer in a model obtained by adding a sufficient number (supercompact cardinal, if need be) of Cohen reals. Theorem \ref{main} and its proof give some (inconclusive) support for the negation of this conjecture. Additional support would be provided by a proof that a forcing notion with the properties of $\bE_{A^\circ}(\bar\varphi,\bar\psi)$ and the countable chain condition, can be defined for tuples of inequivalent pure states for every simple and unital (not necessarily separable) \cstar-algebra. The experience suggests that the countable chain condition and non-commutativity do not mix well:  Note the increase of complexity between  \cite[\S 3]{farahembedding} and \cite[\S 4]{farahembedding}, as well as the proof that a natural noncommutative generalization of the poset for adding many Cohen reals does not have the countable chain condition given in   \cite[Lemma~4.1]{Fa:AllAll}. Moreover, if $A$ has irreducible representations on both separable and non-separable Hilbert spaces (see e.g., \cite[Theorem~10.4.3]{Fa:STCstar}), then adding an automorphism of $A$ that moves one of the associated pure states to another, necessarily collapses $\aleph_1$. Thus, the relevant question is whether such forcing can be constructed for \cstar-algebras that are inductive limits that appear in the course of the proof of Theorem \ref{main}. 

Another possible route towards constructing a counterexample to Naimark's problem would be the following: Instead of forcing with $\bEA{\bar\varphi}{\bar\psi}$ or a modification thereof, find a separable \cstar-subalgebra $B$ of $A$ such that the restrictions $\bar\varphi'$ and $\bar\psi'$ to $B$ of all the tuples of pure states involved, uniquely determine their extensions to $A$. Force with $\bEB{\bar\varphi'}{\bar\psi'}$ to produce a generic automorphism $\Phi$ of $B$ such that $\bar\varphi'$ and $\bar\psi'$ have unique, and equivalent, extensions to $B\rtimes_{\Phi} \bbZ$. This plan hinges on the answer to the following purely \cstar-algebraic question.\footnote{Note, however, that we may assume $B$ is an elementary submodel of $A$; see \cite[Appendix~D]{Fa:STCstar}.} For simplicity, it is stated for single pure states instead of $m$-tuples. 

\begin{quest}
Suppose that $A$ is a unital \cstar-algebra, $B$ is a unital \cstar-subalgebra of $A$, both $A$ and $B$ are simple, and $\varphi'$ and $\psi'$ are pure states of $B$ with the unique pure state extensions to $A$. In addition, suppose that $\Phi$ is a sufficiently generic automorphism of $B$ such that $\varphi'$ and $\psi'$ have unique and unitarily equivalent pure state extensions to $B\rtimes_\Phi \bbZ$. Is there an amalgamation $C$ of $B\rtimes_\Phi\bZ$ and $A$ such that $\varphi$ and $\psi$ have unique pure state extensions to $C$?
\end{quest}

Such amalgamation would be a `partial crossed product' of sorts of $A$ by an automorphism of $B$. There is a rich literature on partial crossed products (see \cite{exel2017partial} and the references thereof), but our situation does not satisfy the requirements imposed on partial dynamical systems in \cite[Definition~6.4]{exel2017partial}. 

\begin{quest}\label{muyouter}
Let $A$ be a separable, simple, non-type I \cstar-algebra. Does there exist an automorphism $\Theta$ of $A$ such that $\sigma\circ\Theta\nsim\sigma$ for every pure state $\sigma$ of $A$?
\end{quest}

If a \cstar-algebra $A$ has an automorphism $\Theta$ as in Question \ref{muyouter}, then for every $m\geq 1$ and tuples $\bar\varphi$ in $\bar\psi$ in $\pure_m(A)$, it has an automorphism $\Phi$ with the same property that in addition satisfies $\bar\varphi\circ \Phi=\bar\psi$. Such an automorphism can be obtained by conjugating $\Theta$ by a Kishimoto--Ozawa--Sakai-type automorphism as in Theorem \ref{preglimm}. Assuming that in addition one could assure that all pure states of $A$ have unique pure state extensions to a crossed product associated with $\Theta$, one would secure the assumptions of the following. 

\begin{prop}\label{P.scifi}
Suppose that there exists a class $\cA$ of separable, simple, unital \cstar-algebras such that:
\begin{enumerate}
    \item\label{2.P.scifi} $\cA$ is closed under inductive limits, and
    \item\label{3.P.scifi} For every $A\in \cA$ and pure states $\varphi$ and $\psi$ of $A$, there exists an extension $B\in\cA$ of $A$ such that (i) $\varphi$ and $\psi$ have equivalent pure state extensions to $B$ and (ii) every pure state of $A$ has a unique pure state extension to $B$. 
\end{enumerate}
Then $\ax{CH}$ implies that there is a counterexample to Naimark's problem.   
\end{prop}

\begin{proof}
Suppose that $\ax{CH}$ holds, and fix $X\subseteq\aleph_1$ such that the inner model $L[X]$ (see \cite[Definition~II.6.29]{Ku:Set}) contains all reals. Then $\diamondsuit_{\aleph_1}$ holds in $L[X]$ (see \cite[Exercise~III.7.21]{Ku:Set}). Working in $L[X]$, modify the construction of a counterexample as in \cite{AkeWe:Consistency} (see also \cite[Theorem~11.2.2]{Fa:STCstar}) as follows: One constructs an inductive system of \cstar-algebras $A_\alpha$, for $\alpha<\aleph_1$, in $\cA$ so that at every successor step of the construction the extension $A_{\alpha+1}$ of $A_\alpha$ is chosen using $\diamondsuit_{\aleph_1}$ and \eqref{3.P.scifi}. At limit stages, take inductive limits. The inductive limit $A$ of this system is a counterexample to Naimark's problem in $L[X]$, by a proof analogous to those in \cite{AkeWe:Consistency} or \cite[Theorem~11.2.2]{Fa:STCstar}.  

We claim that $A$ remains a counterexample to Naimark's problem in the universe $V$. Assume otherwise. Since it is a counterexample to Naimark's problem in $L[X]$, there exists a pure state $\eta$ of $A$ that belongs to $V$ but not to $L[X]$. The set
\begin{equation*}
    \sfC:=\{\alpha<\aleph_1:\text{ the restriction of }\eta\text{ to }A_\alpha\text{ is pure}\}   
\end{equation*}
includes a club (see \cite[Proposition~7.3.10]{Fa:STCstar}). Let $\alpha:=\min(\sfC)$. By induction on countable ordinals $\beta\geq \alpha$, one proves that $\eta\rs A_\beta$ is the unique pure state extension of $\eta\rs A_\alpha$ to $A_\beta$ in $L[X]$, for every $\beta<\aleph_1$. At the successor stages this is a consequence of the choice of $A_{\beta+1}$, and at the limit stages it is automatic. This provides a definition of $\eta$ in $L[X]$; contradiction. 
\end{proof}

The proof of Proposition \ref{P.scifi} begs the question: Is it possible to add a new pure state to a counterexample to Naimark's without adding new reals? The answer is, at least assuming $\diamondsuit_{\aleph_1}$, positive (see \cite[Exercise~11.4.11]{Fa:STCstar}); one can see that $\diacoh+\ax{CH}$ suffices for this construction). 

The main result of \cite{farah2016simple} is a construction (under the assumption of $\diamondsuit_{\aleph_1}$) of a nuclear, simple \cstar-algebra not isomorphic to its opposite algebra. We do not know whether the existence of an algebra with this property follows from $\diacoh+\ax{CH}$. In the same theorem, a counterexample to Glimm's dichotomy with exactly $\aleph_0$ inequivalent pure states was constructed using $\diamondsuit_{\aleph_1}$. Such construction using $\diacoh$ would require a generalization of the forcing $\bEA{\bar\varphi}{\bar\psi}$ to countable sequences of inequivalent pure states.  

As pointed out in \cite[Section 7]{farah2021nonseparable}, there is a possibility that the methods developed there may give a $\ax{ZFC}$ construction of a simple \cstar-algebra not isomorphic to its opposite. These methods are however unlikely to be relevant to Naimark’s problem (see the discussion of \cite{suri2017naimark} below).

In \cite{suri2017naimark}, it was shown that a counterexample to Naimark's problem cannot be a graph \cstar-algebra (not to be confused with the `graph CCR algebras' of \cite[\S 10]{Fa:STCstar}). We conjecture that a sweeping generalization of this result holds: If a \cstar-algebra $A(\Gamma)$ is defined from a discrete object (graph, group, groupoid, semigroup, etc.) $\Gamma$ in a way that assures that (using the notation of Appendix \ref{S.generic}) $A(\Gamma)$ as computed in $M$ is dense in $A(\Gamma)$ as computed in $M[\kG]$ for all $M$ and all $M$-generic filter $\kG$, then $A(\Gamma)$ is (provably in $\ax{ZFC}$) not a counterexample to Naimark's problem.   

By \cite[Corollary~2.3]{kishimoto1981outer}, an automorphism $\Phi$ of a separable and simple \cstar-algebra satisfies $\varphi\circ\Phi\sim\varphi$ for all pure states $\varphi$ of $A$ if, and only if, it is inner. Theorem \ref{agap} and Theorem \ref{preglimm} imply that both the generic automorphism $\Phi_{\kG}$ and the conjugate of a ground-model outer automorphism by $\Phi_{\kG}$ send every ground-model pure state to an inequivalent pure state. We conjecture that this property is shared by every reduced word in outer automorphisms of $A$ and $\Phi_{\kG}$ in which the latter occurs. This resembles the properties of generic automorphisms (and anti-automorphisms) of II$_1$ factors as exhibited in \cite[Lemma A.2]{Ioana-Peterson-Popa} and \cite{vaes2009factors}, and used there to construct interesting examples of II$_1$ factors with a separable predual. These lemmas, combined with an iterated crossed product construction \`a la Akemann--Weaver propelled by $\diamondsuit_{\aleph_1}$ was used in \cite{farah2021rigid} to construct a hyperfinite II$_1$ factor with non-separable predual and not isomorphic to its opposite. It is not difficult to see that $\diacoh+\ax{CH}$ in place of $\diamondsuit_{\aleph_1}$ suffices for this construction.

\appendix

\section{\cstar-algebras in generic extensions}\label{S.generic}

In this appendix, we state and prove a few straightforward results on \cstar-algebras in models of \ZFCmP. In particular, we will prove results about the relation between $A^M$ and $A^{M[\kG]}$ that we could not find in the literature.

Recall that a property \textbf{P} of \cstar-algebras is said to be \emph{absolute} if for every $A$, and every $M$ and $M[\kG]$ as in \S\ref{S.models} and \S\ref{S.forcing}, $A^M$ has \textbf{P} if, and only if, $A^{M[\kG]}$ has \textbf{P}.\footnote{Purists may prefer the term `forcing absolute' but the difference can be ignored in the context of this paper.}

\begin{lem}\label{absolute}
Both being simple and being non-type I are absolute properties of \cstar-algebras. 
\end{lem}

\begin{proof}
To prove that simplicity is absolute, we first consider the case when $A$ is separable and unital. Fix a countable norm-dense subset $D$ of $A$. Then some $x\in A$ generates a proper two-sided ideal if, and only if, for every $m\in\bbN$ and all $m$-tuples $(a_j:j<m)$ and $(b_j:j<m)$ of elements of $D$, we have that $\|1_A-\sum_{i<m} a_ixb_i\|\geq 1$. This is because $\|1-a\|<1$ implies that $a$ is invertible (see \cite[Lemma~1.2.6]{Fa:STCstar}). Thus, the assertion `$A$ has a proper two-sided ideal' is a $\mathbf\Sigma^1_1$-statement (with some code for $A$ as a parameter) and is therefore absolute between all transitive models of \ZFCmP (see \cite[Theorem 25.20]{Jech:SetTheory} or \cite[Theorem~B.2.11]{Fa:STCstar}). 

If $A$ is not necessarily unital, then a similar argument show that the assertion `some $x\in A$ generates a proper ideal of $A$' is $\mathbf\Sigma^1_2$, and therefore absolute between all transitive models of \ZFCmP{} that contain all countable ordinals (see \cite[Theorem 25.20]{Jech:SetTheory} or \cite[Theorem~B.2.11]{Fa:STCstar}). 

If $A$ is not necessarily separable, then standard reflection arguments show that it is simple if, and only if, it is an inductive limit of a directed family of its separable, simple \cstar-subalgebras (see \cite[\S 7.3]{Fa:STCstar}), and the conclusion follows.

We can now prove the absoluteness of being non-type I. Since the completion in a forcing extension of the ground-model CAR algebra is isomorphic to the CAR algebra as calculated in the forcing extension, the upwards absoluteness of being non-type I follows by Glimm's theorem (see \cite[Theorem~3.7.2]{Fa:STCstar}). 
\end{proof}

It is worth mentioning that every axiomatizable (in logic of metric structures, see \cite{Muenster}) property of \cstar-algebras is absolute. The reason for this is that the ball of radius $n$ in $A^M$ is dense in the ball of radius $n$ in $A^{M[\kG]}$ for all $n$, and therefore the suprema and infima of continuous functions on these two sets agree. More generally, properties definable by uniform families of formulas (see \cite[Definition~5.7.1.1]{Muenster}) are absolute. Together with \cite[Theorem~2.5.0.1 and  Theorem~5.7.1.3]{Muenster}, this implies that many important properties of \cstar-algebras are absolute. 

An example of a non-absolute property is separability. Also, being isomorphic to $\sB(H)$ or to the Calkin algebra is not absolute, since (for example) adding new subsets of $\bbN$ adds new projections to the atomic masa that are not in the norm-closure of the ground-model projections. This shows that $\cB(H)^M$ is not dense in $\cB(H)^{M[\kG]}$; in order to assure that they are not even isomorphic, one can for example increase the cardinality of the continuum. 

\begin{cor}\label{bacan}
If $A$ is a separable, simple, unital, and non-type I \cstar-algebra, then $\bE_{A^\circ}(\bar\varphi,\bar\psi)$ forces that $A\rtimes_{\Phi_\kG}\bZ$ has all of these properties. 
\end{cor}

\begin{proof}
By Lemma \ref{absolute}, these properties of $A$ are absolute. The crossed product is therefore unital and non-type I, and it remains to prove that it is simple. Being non-type I, $A$ has continuum many inequivalent pure states and Theorem \ref{agap} implies that $\Phi_\kG$ moves a pure state to an inequivalent pure state, and is therefore outer. By \cite[Theorem~3.1]{kishimoto1981outer}, this implies that the crossed product is simple. 
\end{proof}

\begin{lem} \label{L.absolute}
For a \cstar-algebra $A$, and models $M$ and $M[\kG]$ as described in \S\ref{S.forcing}, we have the following: 
\begin{enumerate}
\item\label{1.absolute} $\U(A^M)$ is norm-dense in $\U(A^{M[\kG]})$.	
\item\label{2.absolute} If $\varphi$ and $\psi$ are pure states of $A$ in $M$, then their (unique) pure state extensions to $A^{M[\kG]}$ are equivalent in $M[\kG]$ if, and only if, $\varphi$ and $\psi$ are equivalent in $M$.   
\end{enumerate}
\end{lem}

\begin{proof}
Note that \eqref{1.absolute} is a consequence of the continuous functional calculus, as follows: Suppose that $(a_n:n\in\bN)$ is a sequence of elements of $A$ in $M[\kG]$ that converges to a unitary $u$. Then $\|a_n^* a_n -1\|\to 0$ and $\|a_n a_n ^* -1\|\to 0$ as $n\to \infty$. Therefore, $a_n^* a_n$ is invertible for a large enough $n$, and hence $|a_n|:=(a_n^*a_n)^{1/2}$ is invertible for a large enough $n$. The unitary from the polar decomposition of $a_n$, $u_n:= a_n|a_n|^{-1}$,  satisfies $\|u_n-u\|\to 0$.

To see \eqref{2.absolute}, as pointed out in \S\ref{S.equiv}, $\varphi$ and $\psi$ are equivalent if, and only if, there is a unitary $u$ such that $\|\varphi\circ \Ad u-\psi\|<2$, hence the conclusion follows from \eqref{1.absolute}. 
\end{proof}

Every definable (in the model-theoretic sense, see \cite[\S 3]{Muenster}) subset of $A$ has the absoluteness property proved for $\U(A)$ in Lemma \ref{L.absolute} by a proof analogous to that of Lemma \ref{L.absolute}.  

The following was essentially proved in \cite[Proposition 6]{AkeWe:Consistency}. We include its proof for completeness. 

\begin{prop}\label{newpure}
Let $M$ be a countable transitive model of \ZFCmP, let $\bP$ be a forcing notion in $M$, and let $A\in M$ be a unital, non-type I \cstar-algebra. If $\bP$ adds a new real to $M$, then it adds a new pure state to $A$ which is inequivalent to any ground-model pure state of $A$.
\end{prop}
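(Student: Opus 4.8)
The plan is to reduce to the CAR algebra, parameterise a perfect family of pairwise inequivalent pure states of $A$ by a Borel map coded in the ground model, and then extract the conclusion from $\mathbf\Sigma^1_1$-absoluteness. First I would invoke Glimm's theorem, as recalled in the Preliminaries: since $A$ is non-type I, it has a \cstar-subalgebra $B$ with a surjective $^*$-homomorphism $\pi\colon B\to D$ onto a copy of the CAR algebra $D\cong\CAR=\bigotimes_{n\in\bbN}M_2(\bC)$. For $x\in 2^{\bbN}$ let $\mu_x:=\bigotimes_n\omega_{x_n}$ be the diagonal product pure state on $D$, where $\omega_0,\omega_1$ are the two diagonal vector states on $M_2(\bC)$; a standard computation (Powers' criterion) shows that $\mu_x$ and $\mu_y$ are unitarily equivalent if and only if $x$ and $y$ agree on all but finitely many coordinates, which I abbreviate $x=^*y$. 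Pulling back along $\pi$, $\tilde\mu_x:=\mu_x\circ\pi$ is a pure state of $B$ (the pullback of a pure state along a surjection is pure), and I would fix, uniformly in $x$, a pure-state extension $\varphi_x\in\pure(A)$ of $\tilde\mu_x$. That $\varphi_x$ can be chosen as a Borel function of $x$ coded in $M$ follows from a measurable-selection theorem (Kuratowski--Ryll-Nardzewski) applied to the weak$^*$-compact faces of extensions; when $A$ is non-separable one first reflects to a separable subalgebra containing $B$ and the relevant data.

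The heart of the matter is to show that the family stays inequivalent in $A$: if $x=^*y$ fails then $\varphi_x\not\sim\varphi_y$. By the equivalence criterion recalled in \S\ref{S.equiv} this amounts to proving $\|\varphi_x\circ\Ad u-\varphi_y\|=2$ for every $u\in\U(A)$. I expect this to be the main obstacle, because unitary equivalence of $\varphi_x$ and $\varphi_y$ does not formally descend to the subquotient $D$: the intertwining unitary lives in $A$ and need not interact with $B$ or with the tensor factors of $D$. The route I would take is excision of pure states. For each $n$ with $x_n\neq y_n$ the local witness $d_n:=e_{x_nx_n}-e_{y_ny_n}$ in the $n$-th tensor factor is a self-adjoint contraction with $\mu_x(d_n)=1$ and $\mu_y(d_n)=-1$; excising $\mu_x$ by positive norm-one elements that may be taken supported on arbitrarily late tensor factors localises $\varphi_x$, so that a fixed unitary $u$ almost commutes with a suitable lift $\hat d$ of some such $d_n$. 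This forces $|\varphi_x(\Ad u(\hat d))-\varphi_y(\hat d)|$ arbitrarily close to $2$, and since $u$ and the tolerance are arbitrary the desired inequivalence follows.

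Finally I would feed the family into an absoluteness argument. The map $x\mapsto\varphi_x$ is coded by a Borel code $c\in M$, and for any pure state $\psi$ the relation $\varphi_x\sim\psi$ is $\mathbf\Sigma^1_1$ in $x$ with parameters $\psi$, $c$, and a code for $A^\circ$, since by the criterion of \S\ref{S.equiv} it reads $\exists u\in\U(A)\ \|\varphi_x\circ\Ad u-\psi\|<2$. Because $\bP$ adds a new real, fix $x_0\in 2^{\bbN}\cap M[\kG]\setminus M$. Suppose $\varphi_{x_0}\sim\psi$ for some ground-model pure state $\psi$. Then $M[\kG]\models\exists x\,(\varphi_x\sim\psi)$; by Mostowski absoluteness of $\mathbf\Sigma^1_1$ statements between the transitive models $M\subseteq M[\kG]$, also $M\models\exists x\,(\varphi_x\sim\psi)$, so there is $z\in M$ with $\varphi_z\sim\psi$, and by upward absoluteness this holds in $M[\kG]$. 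Hence $\varphi_{x_0}\sim\varphi_z$, and the inequivalence established above forces $x_0=^*z$.

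Since $z\in M$ and $M$ is closed under finite modifications of its reals, $x_0=^*z$ would give $x_0\in M$, a contradiction. Taking $\psi:=\varphi_{x_0}$ in the same argument shows moreover that $\varphi_{x_0}\notin M$, so $\varphi_{x_0}$ is a genuinely new pure state of $A$ and it is inequivalent to every ground-model pure state, as required.
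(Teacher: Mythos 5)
Your reduction to the CAR algebra and the closing absoluteness argument follow a reasonable outline, but there is a genuine gap exactly where you located ``the main obstacle'': the claim that $x\neq^* y$ implies $\varphi_x\not\sim\varphi_y$ in $A$, and the excision argument you sketch does not bridge it. Excision localises $\varphi_x$ inside $B$ (or $D$), but the assertion that a \emph{fixed} unitary $u\in\U(A)$ ``almost commutes with a suitable lift $\hat d$'' is unjustified: almost-commutation with late tensor factors is a feature of elements of $\CAR$ itself (they are norm-approximated by elements of finite subproducts, which commute exactly with late factors), whereas $u$ is an arbitrary unitary of the ambient algebra $A$, which carries no tensor filtration and need not interact with $B$ at all. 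Worse, the claim itself fails in general for uncontrolled choices of extensions, because inequivalence of pure states does not pass to extensions along a subalgebra. For a concrete obstruction, let $\Theta:=\bigotimes_n\Ad\left(\begin{smallmatrix}0&1\\1&0\end{smallmatrix}\right)$, an outer order-two automorphism of $\CAR$ satisfying $\mu_x\circ\Theta=\mu_{\bar x}$, where $\bar x$ is the coordinate flip of $x$, and let $A:=\CAR\rtimes_\Theta\bZ/2\bZ$ (simple, unital, non-type I). In $A$ the automorphism $\Theta$ is implemented by a unitary $u$, so for \emph{any} pure extension $\varphi$ of $\mu_x$ the state $\varphi\circ\Ad u$ is a pure extension of $\mu_{\bar x}$ that is unitarily equivalent to $\varphi$, even though $x$ and $\bar x$ differ in every coordinate. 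A Kuratowski--Ryll-Nardzewski selection gives no control over which extensions are picked, so your family may simply fail to be pairwise inequivalent; indeed, controlling which ground-model pure states acquire equivalent extensions is precisely the hard content of Theorem \ref{agap}, Corollary \ref{uep}, and Lemma \ref{L.Extension} in the paper.

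The paper's own proof avoids this issue entirely, and your argument can be repaired by adopting its mechanism. It uses the same product states $\psi_r$ but never needs them pairwise inequivalent in $A$: if $\psi_r$ were equivalent to a ground-model pure state, then, since $\U(A)^M$ is norm-dense in $\U(A)^{M[\kG]}$ (Lemma \ref{L.absolute}), there is a ground-model pure state $\sigma$ with $\|\psi_r-\sigma\|<1$; evaluating $\sigma$ at ground-model self-adjoint lifts $\hat d_n\in B$ of $e^{(n)}_{00}-e^{(n)}_{11}$, for which $\psi_r(\hat d_n)=\pm1$ according to $r(n)$, recovers $r(n)$ as the sign of $\sigma(\hat d_n)$, hence $r\in M$ --- a contradiction. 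This ``recover the real from any nearby ground-model state'' step replaces both your pairwise-inequivalence claim and the $\mathbf\Sigma^1_1$-absoluteness argument, and it works verbatim for non-separable $A$, where your Borel parameterization of $x\mapsto\varphi_x$ and the $\mathbf\Sigma^1_1$ formulation of equivalence would require a further reflection to a separable subalgebra that breaks the parameterization.
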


\begin{proof}
The construction is very similar to the one in the proof of Theorem 5.5.4 in \cite{Fa:STCstar}, where additional details can be found. For $i<2$ define a linear functional $\delta_i$ on $M_2(\bbC)$ by
\begin{equation*}
    \delta_i\left(
    \begin{pmatrix}
    \lambda_{00} & \lambda_{01}\\
    \lambda_{10} & \lambda_{11}
    \end{pmatrix}
    \right)=\lambda_{ii}.
\end{equation*}
This is a pure state of $M_2(\bbC)$. For $r\in 2^\bbN$ define a linear functional $\varphi_r$ on the CAR algebra as follows: on the elementary tensors (note that in $\bigotimes_{n\in \bN} a_n$, we have that $a_n=1$ for all but finitely many $n$) let
\begin{equation*}
    \varphi_r\left(\bigotimes_{n\in\bN}a_n\right)=\prod_{n\in\bN}\delta_{r(n)}(a_n).
\end{equation*}
The linear extension of $\varphi_r$ (still denoted $\varphi_r$) is a pure state on $\CAR$. By Glimm's theorem (see \cite[Theorem~3.7.2]{Fa:STCstar}), $A$ includes some separable \cstar-subalgebra $B$ which has the CAR algebra as a quotient. The composition of $\varphi_r$ with the quotient map is a pure state of $B$, and this pure state can be extended to a pure state $\psi_r$ of $A$. Clearly, $r$ can be recovered from $\psi_r$ by evaluation.

Suppose that $\psi_r$ is equivalent to a ground-model pure state. Since $\U(A)^M$ is norm-dense in $\U(A)^{M[\kG]}$, there exists a ground-model pure state $\sigma$ of $A$ such that $\|\psi_r-\sigma\|<1$. Then the restriction of $\sigma$ to $B$ still factors through the quotient map to a state of the CAR algebra and $r$ can be recovered from this state. But this implies that $r$ belongs to the ground model; contradiction.  
\end{proof}

Suppose $M$ is a transitive model of \ZFCmP{} and $X$ is a Polish space with a code in $M$. By a result of Solovay, an element $r$ of $X$ is Cohen-generic over $M$ if, and only if, it belongs to every dense open subset of $X$ coded in $M$ (see \cite[Lemma 26.24]{Jech:SetTheory}). Thus, there exists a Cohen-generic element of $X$ over $M$ if, and only if, the closed nowhere dense subsets of $X$ with codes in $M$ do not cover $X$.

The minimal cardinality of a family of nowhere dense sets that cover the real line is denoted by $\cov(\cM)$ (see \cite[\S 8.4]{Fa:STCstar}). The Baire category theorem implies that this cardinal is uncountable, and Martin's Axiom for $\kappa$ dense sets implies that $\cov(\cM)>\kappa$ (see \cite{Ku:Set}). 

\begin{cor}
Let $A$ be a separable, simple, unital and non-type I \cstar-algebra, and let $X\subseteq\pure(A)$ with $|X|<\cov(\cM)$. Then for every pair of inequivalent pure states $\varphi$ and $\psi$ on $A$ there exists some $\Phi\in\aut(A)$ such that $\varphi\circ\Phi=\psi$ and every $\rho\in X$ has a unique pure state extension to $A\rtimes_\Phi\bZ$.  
\end{cor}

\begin{proof} 
Let $M$ be an elementary submodel of $H((2^{\aleph_0})^+)$ such that $A\in M$, $X\subseteq M$, and $|M|<\cov(\cM)$. Let $\bar M$ be the transitive collapse of $M$. Then the nowhere dense subsets of $\bbR$ coded in $\bar M$ are too few to cover $\bbR$. By a result of Solovay, a real that does not belong to any of these sets is Cohen-generic over $\bar M$ (see \cite[Lemma 26.24]{Jech:SetTheory}). By Theorem \ref{sumup}, there exists an $\bar M$-generic filter $\kG$ on $\bEA{\bar\varphi}{\bar\psi}$, and by Corollary \ref{uep}, $\Phi:=\Phi_\kG$ is as required.   
\end{proof}

\section{The combinatorial principle $\diacoh$}\label{S.diamond}

This section contains only set-theoretic considerations: we prove that the combinatorial principle $\diacoh$ does not imply Jensen's $\diamondsuit_{\aleph_1}$ and that it does not decide the cardinality of the continuum. 

As the attentive reader may have noticed during the proof of Theorem \ref{realmain} (or Theorem \ref{T.separable}), the combinatorial principle $\diacoh$ can be thought as an oracle in which the required tasks at successor steps can be done by the mean of a Cohen real. On the upside, and in opposition to the usual application of Jensen's $\diamondsuit_{\aleph_1}$, such tasks can be delayed (this is the job of the book-keeping) and they do not have to be handled at the moment they are captured by the oracle. Our weakening of $\diamondsuit_{\aleph_1}$ is, at the end of the day, a sort of guessing-plus-forcing axiom in which the generic objects exist (in a prescribed extension) only for countable forcing notions that are elements of models whose job is to capture subsets of $\aleph_1$ correctly.

\begin{lem}\label{lemon}
It is relatively consistent with $\ax{ZFC}$ that $\diacoh+\ax{CH}+\neg\diamondsuit_{\aleph_1}$ holds.
\end{lem}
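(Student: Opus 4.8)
The plan is to realize $\diacoh$ over a model of $\ax{CH}+\neg\diamondsuit$ by producing the chain $(M_\alpha)$ from a Cohen filtration of the reals, and then to destroy $\diamondsuit$ by a $\sigma$-closed forcing chosen so as not to disturb that filtration. Concretely, I would start with $V=L$ and first force with the finite-support product $\bbP$ adding $\aleph_1$ Cohen reals, with generic $G=\langle c_\xi:\xi<\omega_1\rangle$; writing $G\rs\alpha$ for the restriction to the first $\alpha$ coordinates, set
\[
M_\alpha:=H_{\aleph_1}^{L[G\rs\alpha]}.
\]
Each $M_\alpha$ is an uncountable transitive model of \ZFCmP{} (it already contains the $\aleph_1$ reals of $L$), so $\diacoh$(a) holds and, crucially, the hypothesis of the Kunen characterization behind the countable case fails, leaving room for $\neg\diamondsuit$. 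For $\diacoh$(c) the key point is that $c_\alpha$ is Cohen-generic over $L[G\rs\alpha]\supseteq M_\alpha$ while $c_\alpha\in M_{\alpha+1}$; by Solovay's characterization recalled in \S\ref{S.X} this is exactly a real of $M_{\alpha+1}$ Cohen-generic over $M_\alpha$. For $\diacoh$(b) restricted to $X\in L[G]$, a standard c.c.c.\ support-reflection argument applies: since $\bbP$ decides each ``$\beta\in X$'' by a countable antichain of finite conditions, the closure points of the associated support function form a club on which $X\cap\alpha\in L[G\rs\alpha]=M_\alpha\cap P(\alpha)$.

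Next I would force with the usual $\sigma$-closed, $\aleph_2$-c.c.\ iteration $\bbQ$ of length $\omega_2$ that diagonalizes every potential $\diamondsuit$-sequence, so that $\neg\diamondsuit$ holds in the final model $W:=L[G][H]$. Because $\bbQ$ is $\sigma$-closed it adds no new reals; hence $\ax{CH}$ is preserved, the reals (and therefore the models $M_\alpha$ and the Cohen reals $c_\alpha$) are precisely those of $L[G]$, and $\diacoh$(c) survives verbatim, as does $\diacoh$(a). Thus $W$ already satisfies $\ax{CH}+\neg\diamondsuit$ together with conditions (a) and (c), and the only remaining task is to verify that $\diacoh$(b) continues to hold for the \emph{new} subsets of $\omega_1$ added by $\bbQ$.

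This last point is the main obstacle. Fix $X\subseteq\omega_1$ in $W$. Since $\bbQ$ adds no reals, every bounded piece $X\cap\alpha$ is a real of $L[G]$, so it lies in $L[G\rs\gamma]$ for some $\gamma<\omega_1$; writing $g(\alpha)$ for the least such $\gamma$, condition $\diacoh$(b) for $X$ amounts to $\{\alpha:g(\alpha)\le\alpha\}$ being stationary. The subtlety is that $g$ is computed in $W$, and a priori nothing forbids $g(\alpha)>\alpha$ on a club: a $\sigma$-closed generic could conceivably encode high-support Cohen reals into low initial segments of $X$, and the naive ``closure points of $g$'' argument stumbles on the possibility that the assembling sequence $\langle X\cap\beta:\beta<\alpha\rangle$ is a \emph{fresh} real over $L[G\rs\alpha]$. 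I would resolve this by exploiting that the objects the iteration actually reads off are themselves $L[G]$-reals: each $\diamondsuit$-sequence $\bar A=\langle A_\xi\rangle$ diagonalized at a given stage consists of reals $A_\xi\in L[G]$, whose Cohen-support function has club-many closure points computed in the relevant intermediate model. On such a closure point $\alpha$ the diagonalization against $\bar A$ can be carried out inside $L[G\rs\alpha]$, so the stage-generic set can be chosen with its $\alpha$-th initial segment in $L[G\rs\alpha]$ there; as these clubs are coded below $W$ and $\sigma$-closed forcing preserves clubs and stationary subsets of $\omega_1$, they remain stationary in $W$. Running this uniformly through the iteration, and combining it with the c.c.c.\ reflection for the $L[G]$-part, yields $\diacoh$(b) for every $X\in W$.

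Assembling the pieces, $W\models\ax{CH}$ (no reals are added over $L[G]$), $W\models\neg\diamondsuit$ (by the choice of $\bbQ$), and $(M_\alpha:\alpha<\omega_1)$ is a $\diacoh$-chain by (a), (c), and the preserved (b); the $\aleph_2$-c.c.\ together with $\sigma$-closure preserve all cardinals, so the whole construction is a set forcing over $L$, giving the desired relative consistency. The one genuinely delicate step—the one I would write out in full—is the preservation of $\diacoh$(b) through the $\diamondsuit$-killing iteration.
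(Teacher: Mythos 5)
There is a fatal gap, and it is not the step you flagged as delicate: the $\sigma$-closed ``$\diamondsuit$-killing'' iteration $\bbQ$ you invoke does not exist, and in fact your final model $W$ satisfies $\diamondsuit$. First, $\diamondsuit$ holds in $L$, and $\diamondsuit$ is preserved by any ccc forcing of cardinality at most $\aleph_1$ (code a nice name for a subset of $\omega_1$ as a subset of $\omega_1\times\omega_1$, let a ground-model $\diamondsuit$-sequence guess its initial segments, and evaluate the guessed name-fragments by the generic filter; ground-model stationary sets remain stationary under ccc forcing). Hence $L[G]\models\diamondsuit$. Second, \emph{every} $\sigma$-closed forcing preserves $\diamondsuit$: given a ground-model $\diamondsuit$-sequence $\langle A_\alpha:\alpha<\omega_1\rangle$, names $\dot X$, $\dot C$ and a condition $q_0$ forcing that $\langle A_\alpha\rangle$ fails to guess $\dot X$ at any point of $\dot C$, use $\sigma$-closure to build in $V$ a decreasing $\omega_1$-sequence of conditions $q_\alpha\leq q_0$ and a continuous increasing sequence of ordinals $\gamma_\alpha$ such that $q_{\alpha+1}$ decides $\dot X\cap\gamma_\alpha$ to be some $x_\alpha\in V$ and forces an element of $\dot C$ into $[\gamma_\alpha,\gamma_{\alpha+1})$. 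The sets $x_\alpha$ cohere to a set $Y\in V$, the ordinals with $\gamma_\alpha=\alpha$ form a club $E\in V$, and at any $\alpha\in E$ where the ground-model sequence guesses $Y$ the condition $q_\alpha$ forces $\alpha\in\dot C$ and $\dot X\cap\alpha=Y\cap\alpha=A_\alpha$, a contradiction. (This is exactly why the natural diagonalization forcing---adding by initial segments a set $X$ together with a club avoiding $\{\alpha:X\cap\alpha=A_\alpha\}$---is \emph{not} $\sigma$-closed: the union of an $\omega$-sequence of conditions can itself be caught by the guess at its supremum, and then it has no extension.) So both steps of your construction preserve $\diamondsuit$, and $W\models\diamondsuit$ no matter how $\bbQ$ is designed.

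The architectural moral is that $\neg\diamondsuit$ cannot be created at the top of such a construction; it must be inherited from the bottom. Producing any model of $\ax{CH}+\neg\diamondsuit$ is precisely Jensen's deep theorem (the hard core of his consistency proof of $\ax{CH}$ with Suslin's hypothesis), and the paper treats it as a black box: start from Jensen's model $M_0$ of $\ax{CH}+\neg\diamondsuit$, then force with an $\omega_1$-length finite-support iteration of nontrivial ccc forcings of size at most $\aleph_1$. There $\ax{CH}$ survives by counting names, $\neg\diamondsuit$ survives because a ccc forcing of size at most $\aleph_1$ cannot create a $\diamondsuit$-sequence (the exercise from Kunen's book cited in the paper), Cohen reals appear at limit stages (giving $\diacoh$(c)), and the chain of intermediate extensions satisfies $\diacoh$(b) by a ccc closing-off argument. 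Your verifications of (a), (b), (c) for the chain over $L[G]$ are essentially this same argument, so the fix is to replace $L$ by Jensen's model and delete the $\sigma$-closed stage entirely, at which point your proof collapses to the paper's.
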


\begin{proof} 
Let $M_0$ be a countable, transitive model of a large enough fragment of $\ax{ZFC}+\ax{CH}$ in which $\diamondsuit_{\aleph_1}$ fails. Such model was first constructed by Jensen (see \cite{devlin2006souslin}, also \cite[\S V]{Sh:PIF}). Let $(\bbP_\alpha,\dot\bbQ_\alpha)_{\alpha<\aleph_1}$ be a finite support iteration of non-trivial ccc forcing notions, each of which has cardinality at most $\aleph_1$. Let $\kG\subseteq \bbP_{\aleph_1}$ be an $M_0$-generic filter. By the countable chain condition, $\diamondsuit_{\aleph_1}$ fails in $M_0[\kG]$ (see \cite[Exercise IV.7.57]{Ku:Set}) and the standard `counting of names' argument shows that the Continuum Hypothesis holds in $M_0[\kG]$. 
 
For $\alpha<\aleph_1$, $M_\alpha:=M_0[\kG\cap \bbP_{\omega\cdot\alpha}]$ (here $\omega\cdot\alpha$ is the $\alpha$th limit ordinal) is the intermediate forcing extension. By the countable chain condition again, no reals are added at stages of uncountable cofinality (see \cite[Lemma 18.9]{LorBook}), and therefore every real in $M_0[\kG]$ belongs to some $M_\alpha$ for $\alpha<\aleph_1$. Since a finite support iteration of non-trivial ccc forcings adds a Cohen real at every limit stage of countable cofinality (see \cite[Exercise V.4.25]{Ku:Set}), for every $\alpha<\aleph_1$ the model $M_{\alpha+1}$ contains a real that is Cohen-generic over $M_\alpha$. 

Fix a name for a subset $X$ of $\aleph_1$. Yet again, by the countable chain condition and the standard closing off argument, there is a club $C\subseteq \aleph_1$ such that for every $\alpha\in C$, the forcing notion $\bbP_\alpha$ adds $X\cap \alpha$. Therefore, $X\cap \alpha\in M_\alpha$ for stationary many $\alpha$, and $\bbP_{\aleph_1}$ forces that $\diacoh$ holds.    
\end{proof}

The following corollary exhibits a substantial difference between the two principles $\diamondsuit_{\aleph_1}$ and $\diacoh$.

\begin{cor}\label{C.CH}
The principle $\diacoh$ does not decide the value of $\kc$. 
\end{cor}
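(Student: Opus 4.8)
The plan is to prove $\operatorname{Con}(\diacoh + \kc = \aleph_1)$ and $\operatorname{Con}(\diacoh + \kc = \aleph_2)$; since these pin down two different values of the continuum, $\diacoh$ cannot decide $\kc$. The first statement is already contained in Lemma \ref{lemon}, whose model satisfies $\ax{CH}$ and hence $\kc = \aleph_1$. So the whole task reduces to producing a model of $\diacoh$ in which $\kc = \aleph_2$, and for this I would simply rerun the construction of Lemma \ref{lemon} with fatter iterands.

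More precisely, start with a countable transitive model $M_0$ of a large enough fragment of $\ax{ZFC}+\ax{GCH}$, and let $\langle\bbP_\alpha,\dot\bbQ_\alpha\rangle_{\alpha<\aleph_1}$ be a finite support iteration in which each $\dot\bbQ_\alpha$ is a name for the Cohen forcing $\operatorname{Fn}(\aleph_2,2,\omega)$ adding $\aleph_2$ Cohen reals, rather than a forcing of size at most $\aleph_1$ as in Lemma \ref{lemon}. Let $\kG$ be $M_0$-generic for $\bbP_{\aleph_1}$ and set $M_\alpha:=M_0[\kG\cap\bbP_\alpha]$, exactly as before. The iteration is still a finite support iteration of ccc forcings, hence ccc, so all cardinals are preserved; the only effect of enlarging the iterands is that the standard counting of nice names now reads $|\bbP_{\aleph_1}|^{\aleph_0}=\aleph_2^{\aleph_0}=\aleph_2$, so that $\kc=\aleph_2$ in $M_0[\kG]$ (the lower bound $\kc\geq\aleph_2$ being witnessed already by $\dot\bbQ_0$).

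It then remains to check that $\langle M_\alpha:\alpha<\aleph_1\rangle$ is a $\diacoh$-chain. Condition $\diacoh$(a) holds because each $M_\alpha$ is an intermediate generic extension of $M_0$, hence a countable transitive model of \ZFCmP, and the sequence is increasing and lies in $M_0[\kG]$. Condition $\diacoh$\eqref{3.diacoh} holds because the successor step from $M_\alpha$ to $M_{\alpha+1}$ is a forcing by the Cohen poset $\dot\bbQ_\alpha$, which adds a real that is Cohen-generic over $M_\alpha$. Condition $\diacoh$\eqref{2.diacoh} is verified by the very same closing-off argument as in Lemma \ref{lemon}: given $X\subseteq\aleph_1$ in $M_0[\kG]$, ccc-ness implies that for each $\beta<\aleph_1$ the statement ``$\beta\in X$'' is decided by a countable maximal antichain whose conditions have supports bounded by some $e(\beta)<\aleph_1$, and for every $\alpha$ in the club of ordinals closed under $e$ one has $X\cap\alpha\in M_\alpha$; in particular the capturing set is stationary (in fact contains a club).

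The one point that genuinely requires the length of the iteration to be $\aleph_1$, and is the main thing to watch, is that $\bigcup_{\alpha<\aleph_1}M_\alpha$ must contain every real of $M_0[\kG]$, even though there are now $\aleph_2$ of them. This is where the fatness of the iterands and the shortness of the iteration cooperate: by ccc every real has a name supported on a countable set of coordinates, and a countable subset of $\aleph_1$ is bounded, so each real already appears in some $M_\alpha$ with $\alpha<\aleph_1$. Thus no single model $M_\alpha$ carries all $\aleph_2$ reals, yet their union does. Combining this model of $\kc=\aleph_2$ with the $\ax{CH}$ model of Lemma \ref{lemon} proves the corollary; the same argument with $\operatorname{Fn}(\kappa,2,\omega)$ in place of $\operatorname{Fn}(\aleph_2,2,\omega)$, for any $\kappa$ with $\kappa^{\aleph_0}=\kappa$, yields models of $\diacoh$ with $\kc=\kappa$.
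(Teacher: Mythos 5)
Your proof is correct, and it takes a genuinely different route from the paper's. The paper keeps the iteration of Lemma \ref{lemon} completely unchanged (a finite support iteration of length $\aleph_1$ of ccc forcings of size at most $\aleph_1$) and instead varies the \emph{ground model}: starting from $M_0\models 2^{\aleph_0}=\kappa$, the iteration is ccc and of size $\aleph_1\leq\kappa$, so it preserves cardinals and the value of the continuum, and the $\diacoh$-chain verification is word-for-word that of Lemma \ref{lemon}; nothing new needs to be checked. You instead fix a $\ax{GCH}$ ground model and fatten the \emph{iterands} to $\operatorname{Fn}(\aleph_2,2,\omega)$, letting the iteration itself inflate the continuum; this forces you to redo the nice-name count ($\aleph_2^{\aleph_0}=\aleph_2$ under $\ax{GCH}$) and to re-verify conditions $\diacoh$\eqref{2.diacoh} and $\diacoh$\eqref{3.diacoh}, all of which you do correctly. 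Two remarks on the trade-off. First, your route has a small structural bonus: since your iterands are literally Cohen forcing, $\diacoh$\eqref{3.diacoh} holds at every successor step, so you do not need the paper's $\omega\cdot\alpha$ reindexing trick (which exists only because Lemma \ref{lemon} allows arbitrary nontrivial ccc iterands and harvests Cohen reals from limit stages). In fact, because the poset $\operatorname{Fn}(\aleph_2,2,\omega)$ is absolute, your finite support iteration is forcing-equivalent to the finite support product, i.e., to $\operatorname{Fn}(\aleph_2,2,\omega)$ itself; so what you have really shown is that the classical Cohen model satisfies $\diacoh$, with the $\aleph_1$-chain arising from slicing the coordinates --- arguably a more informative conclusion than the paper's. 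Second, the paper's route is shorter and more flexible at the level of generality: it yields $\diacoh+2^{\aleph_0}=\kappa$ for \emph{any} $\kappa$ realizable as the continuum, with zero additional argument, whereas your generalization requires the (essentially equivalent, but stated as an extra hypothesis) condition $\kappa^{\aleph_0}=\kappa$. Both proofs inherit from Lemma \ref{lemon} the same mild abuse of treating the intermediate extensions $M_0[\kG\cap\bbP_\alpha]$ as the chain models inside the final model, so you are not to be faulted for that.
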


\begin{proof}
If in the proof of Lemma \ref{lemon} we begin with a model of $2^{\aleph_0}=\kappa$, then $M_0[\kG]$ is a model of $\diacoh+2^{\aleph_0}=\kappa$.
\end{proof}

To see that $\diacoh$ is not a consequence of $\ax{CH}$, we will show that, unlike the Continuum Hypothesis, $\diacoh$ implies the existence of a Suslin tree.

In \cite{moore}, Moore, Hru\v{s}\'ak, and D\v{z}amonja introduced a variety of parametrized $\diamondsuit_{\aleph_1}$ principles based on the weak diamond (see \cite{devlin1978weak}) which have a similar relationship to $\diamondsuit_{\aleph_1}$ as cardinal invariants of the continuum have to $\ax{CH}$.

\begin{defin}[\cite{moore}]
The principle $\diamondsuit(\ax{non}(\sM))$ holds if for every function
\begin{equation*}
    F\colon2^{<\aleph_1}\to\sM 
\end{equation*}
such that $F\upharpoonright2^\alpha$, for $\alpha<\aleph_1$, is Borel, there exists some $g\colon\aleph_1\to\bR$ such that for all $f\colon\aleph_1\to 2$, the set $\{\alpha<\aleph_1:g(\alpha)\notin F(f\upharpoonright\alpha)\}$ is stationary.
\end{defin}

\begin{prop}\label{moore}
The principle $\diamondsuit(\ax{non}(\sM))$ is a consequence of $\diacoh$.
\end{prop}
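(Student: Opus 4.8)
The plan is to use the Cohen reals supplied by clause~$\diacoh$(c) as the guessing sequence, and the reflection clause~$\diacoh$(b) to verify that, for each $f$, the guesses are correct on a stationary set. Fix a $\diacoh$-chain $(M_\alpha:\alpha<\aleph_1)$. For each $\alpha$, clause~$\diacoh$(c) provides a real $g(\alpha)\in M_{\alpha+1}$ that is Cohen-generic over $M_\alpha$; letting $g\colon\aleph_1\to\bR$ be the resulting function (which depends only on the chain, and on $F$ through the coding below, but not on $f$), the one property I would use is Solovay's characterization recalled in \S\ref{S.generic}: a Cohen-generic real over $M_\alpha$ avoids every meager Borel set whose code belongs to $M_\alpha$.

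Now fix a Borel $F$ and an arbitrary $f\colon\aleph_1\to 2$. Since each $F\rs 2^\alpha$ is Borel it is coded by a real $c_\alpha$, and the sequence $(c_\alpha:\alpha<\aleph_1)$ together with $f$ can be coded by a single set $X\subseteq\aleph_1$. By clause~$\diacoh$(b) the set $S:=\{\alpha<\aleph_1:X\cap\alpha\in M_\alpha\}$ is stationary. I would arrange the coding, together with a closing-off argument in the spirit of Lemma~\ref{L.LST}, so that for every $\alpha\in S$ lying in a suitable club $C$ the model $M_\alpha$ contains $f\rs\alpha$ and can compute the Borel map $F\rs 2^\alpha$; by the absoluteness of Borel computations (\S\ref{S.models}), $M_\alpha$ then produces the meager set $F(f\rs\alpha)$ together with a code for it. Consequently $g(\alpha)$, being Cohen-generic over $M_\alpha$, satisfies $g(\alpha)\notin F(f\rs\alpha)$. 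Thus $S\cap C\subseteq\{\alpha:g(\alpha)\notin F(f\rs\alpha)\}$, the latter is stationary, and since $f$ was arbitrary, $g$ witnesses $\diamondsuit(\non(\sM))$.

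The main obstacle is precisely the emphasized step: securing the $\alpha$-th level $F\rs 2^\alpha$ inside $M_\alpha$ at the same stage at which $f\rs\alpha$ is reflected. Unlike $f\rs\alpha$, which is an initial segment and is captured directly by $X\cap\alpha\in M_\alpha$, the datum $F\rs 2^\alpha$ concerns binary sequences of length exactly $\alpha$ and is a priori independent of the lower levels $F\rs 2^{<\alpha}$; any injective enumeration places the code $c_\alpha$ cofinally, never below $\alpha$, so $c_\alpha\in M_\alpha$ cannot be forced level-by-level. The resolution is to exploit that $F$ is \emph{genuinely} Borel, so that $F(f\rs\alpha)$ is an absolute Borel function of $f\rs\alpha$ and of a \emph{uniform} code $e$ for $F$: since $\bigcup_{\alpha<\aleph_1}M_\alpha$ contains all reals, such an $e$ lies in $M_\alpha$ on a tail, which combined with the stationarily many $\alpha$ having $f\rs\alpha\in M_\alpha$ yields stationarily many stages where $M_\alpha$ computes $F(f\rs\alpha)$. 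Making this uniformity precise—either by reducing to a uniformly Borel representative of $F$, or dually by applying the Kuratowski--Ulam theorem to transfer the genericity of $g(\alpha)$ into the statement that the reflected point $f\rs\alpha$ avoids the meager slice $\{s:g(\alpha)\in F(s)\}$—is the crux of the argument.
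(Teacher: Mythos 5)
There is a genuine gap, and you have located it yourself: the entire proof hinges on getting a code for the $\alpha$-th level $F\rs 2^\alpha$ into a model over which your guess $g(\alpha)$ is Cohen-generic, and neither of your proposed resolutions achieves this. The ``uniform code'' route fails because no such code exists: the principle $\diamondsuit(\ax{non}(\sM))$ only assumes that each level $F\rs2^\alpha$ is Borel \emph{separately}, and the $\aleph_1$ levels are mutually independent. Concretely, since by $\diacoh$(c) no $M_\alpha$ can contain all reals, an adversary may take $F\rs 2^\alpha$ to be constantly equal to the singleton $\{x_\alpha\}$ with $x_\alpha\notin M_\alpha$; then \emph{every} code for the $\alpha$-th level escapes $M_\alpha$, and no single real $e$ codes all levels. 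Your argument would at best prove the principle for the subclass of uniformly coded $F$, which is not the stated proposition. The Kuratowski--Ulam route fails for a different reason: $f$ is completely arbitrary and carries no genericity, so even if you knew that the slice $\{s\in 2^\alpha: g(\alpha)\in F(s)\}$ were meager, nothing would prevent the specific point $f\rs\alpha$ from lying in it; and in any case certifying that this slice is meager requires a code for the Borel set $\{(s,y):y\in F(s)\}$ in the model over which $g(\alpha)$ is generic---the very code $r_\alpha$ that is missing from $M_\alpha$.

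The missing idea, which is how the paper proves the proposition, is to \emph{decouple the stage of genericity from $\alpha$}: nothing in the definition of $\diamondsuit(\ax{non}(\sM))$ requires $g(\alpha)$ to be generic over $M_\alpha$ itself, only that $g$ be defined from $F$ and the chain before $f$ is given. For each $\alpha$ fix a real $r_\alpha$ from which $F\rs 2^\alpha$ is definable. Since $\diacoh$(b) implies that $\bigcup_{\alpha<\aleph_1}M_\alpha$ contains all reals and the chain is increasing, there is $\phi(\alpha)$ with $\alpha\leq\phi(\alpha)<\aleph_1$ and $r_\alpha\in M_{\phi(\alpha)}$; now let $g(\alpha)$ be a real Cohen-generic over $M_{\phi(\alpha)}$, which exists in $M_{\phi(\alpha)+1}$ by $\diacoh$(c). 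This $g$ depends only on $F$ and the chain. Given any $f\colon\aleph_1\to 2$, clause $\diacoh$(b) yields stationarily many $\alpha$ with $f\rs\alpha\in M_\alpha\subseteq M_{\phi(\alpha)}$; for such $\alpha$ the model $M_{\phi(\alpha)}$ contains both $f\rs\alpha$ and $r_\alpha$, hence contains a code for the meager Borel set $F(f\rs\alpha)$, and so $g(\alpha)\notin F(f\rs\alpha)$ by Solovay's characterization of Cohen reals. Thus $\{\alpha:g(\alpha)\notin F(f\rs\alpha)\}$ is stationary, completing the proof. Your setup (using (b) for reflection of $f$ and (c) for the guesses) is otherwise exactly right; the one-line repair is to choose $g(\alpha)$ generic over $M_{\phi(\alpha)}$ rather than over $M_\alpha$.
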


\begin{proof}
Let $(M_\alpha:\alpha<\aleph_1)$ be a $\diacoh$-chain, and let $F\colon2^{<\aleph_1}\to\sM$ be such that for all $\alpha<\aleph_1$ the restriction $F\upharpoonright2^\alpha$ is Borel. For each $\alpha<\aleph_1$, let $r_\alpha\in\bN^\bN$ be such that $F\upharpoonright2^\alpha$ is definable from $r_\alpha$ and let $\alpha\leq\phi(\alpha)<\aleph_1$ be such that $r_\alpha\in M_{\phi(\alpha)}$. Define $g\colon\aleph_1\to\bR$ by choosing $g(\alpha)$ to be Cohen-generic over $M_{\phi(\alpha)}$. Let $f\colon\aleph_1\to 2$ be arbitrary. Since $\{\alpha<\aleph_1:f\upharpoonright\alpha\in M_\alpha\}$ is stationary, and $g(\alpha)$ is Cohen-generic over a model containing both $f\upharpoonright\alpha$ and $r_\alpha$, then $\{\alpha<\aleph_1:g(\alpha)\notin F(f\upharpoonright\alpha)\}$ is stationary as well.
\end{proof}

\begin{cor}
Following the notation above.
\begin{enumerate}
    \item If $\diacoh$ holds then there is a Suslin tree.
    \item The principle $\diacoh$ is not a consequence of $\ax{CH}$.
\end{enumerate}
\end{cor}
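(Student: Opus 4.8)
The plan is to read off both parts from Proposition \ref{moore}, which already gives the implication $\diacoh\Rightarrow\diamondsuit(\ax{non}(\sM))$; everything else is a matter of importing two standard external facts, one about parametrized diamonds and one about the Suslin problem.

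For part (1) I would first apply Proposition \ref{moore} to obtain $\diamondsuit(\ax{non}(\sM))$ from $\diacoh$. Since $\mathfrak b\leq\ax{non}(\sM)$ is provable in $\ax{ZFC}$ (it is an inequality in Cicho\'n's diagram) and is witnessed by a Borel morphism, the monotonicity of the parametrized diamonds established in \cite{moore} yields $\diamondsuit(\ax{non}(\sM))\Rightarrow\diamondsuit(\mathfrak b)$. I would then invoke the construction of \cite{moore} showing that $\diamondsuit(\mathfrak b)$ produces a Suslin tree; this is the usual level-by-level $\diamondsuit$-style construction, where the guessing function supplied by the parametrized diamond is used to anticipate and kill candidate uncountable antichains. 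Composing the two implications gives a Suslin tree from $\diacoh$.

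For part (2) I would exhibit a model of $\ax{CH}$ in which $\diacoh$ fails, and by part (1) it suffices to find a model of $\ax{CH}$ containing no Suslin tree. Such a model is provided by Jensen's consistency of $\ax{CH}$ together with Suslin's Hypothesis (see \cite{devlin2006souslin}). In that model $\ax{CH}$ holds while there is no Suslin tree, so part (1) forces $\diacoh$ to fail there. Hence $\diacoh$ is not a consequence of $\ax{CH}$.

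The argument itself is a two-step deduction, so the only real work is locating and citing the two inputs accurately: the precise statement in \cite{moore} that $\diamondsuit(\mathfrak b)$ (equivalently, here, $\diamondsuit(\ax{non}(\sM))$) yields a Suslin tree, and the classical consistency of $\ax{CH}$ with Suslin's Hypothesis. Both are standard, so I expect no genuine mathematical obstacle once Proposition \ref{moore} is in hand.
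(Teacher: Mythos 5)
Your part (1) has a genuine gap, and it is precisely in the step you flagged as ``locating the citation.'' The monotonicity half is fine: $\mathfrak b\leq\ax{non}(\sM)$ is indeed witnessed by a Borel morphism, so $\diamondsuit(\ax{non}(\sM))\Rightarrow\diamondsuit(\mathfrak b)$. But the theorem you then invoke --- that \cite{moore} constructs a Suslin tree from $\diamondsuit(\mathfrak b)$ --- is not what is in \cite{moore}. What Moore--Hru\v{s}\'ak--D\v{z}amonja prove (their Theorem 3.1, which is exactly what this paper cites) is that $\diamondsuit(\ax{non}(\sM))$ implies the existence of a Suslin tree. Their construction is a $\diamondsuit$-ification of Shelah's theorem that adding a Cohen real adds a Suslin tree, and it uses in an essential way that the guess $g(\alpha)$ avoids a prescribed meager set, i.e., is Cohen-like over the data coded by $f\restriction\alpha$; a merely unbounded guess, which is all that $\diamondsuit(\mathfrak b)$ supplies, is not known to suffice for sealing antichains. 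So by first weakening $\diamondsuit(\ax{non}(\sM))$ to $\diamondsuit(\mathfrak b)$ you discard exactly the strength that the Suslin tree construction needs, and then you must appeal to a result that \cite{moore} does not contain (and which, to our knowledge, is not a theorem at all). The repair is simply to delete the detour: apply Proposition \ref{moore} and then quote \cite[Theorem 3.1]{moore} for $\diamondsuit(\ax{non}(\sM))$ directly --- this is the paper's own proof.

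Your part (2) is correct and coincides with the paper's intent. The paper's laconic ``(2) follows from (1) and Lemma \ref{lemon}'' refers to the model $M_0$ appearing in the proof of Lemma \ref{lemon}, which is Jensen's model of $\ax{CH}$ together with Suslin's Hypothesis from \cite{devlin2006souslin}; by part (1), $\diacoh$ must fail in any model of $\ax{CH}$ without Suslin trees. Your formulation, citing Jensen's consistency result explicitly rather than through Lemma \ref{lemon}, is if anything the more transparent way to state this; but note that it makes part (2) depend on part (1), so the misattribution above does propagate: as written, your proof of (2) is only as good as your proof of (1).
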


\begin{proof}
By \cite[Theorem 3.1]{moore}, $\diamondsuit(\ax{non}(\sM))$ implies that there is a Suslin tree and therefore (1) follows from Proposition \ref{moore}. (2) follows from (1) and the fact that $\ax{CH}$ does not imply the existence of a Suslin tree (\cite{devlin2006souslin}). 
\end{proof}

One could consider $\diamondsuit^{\ax{Random}}$, $\diamondsuit^{\ax{Hechler}}$, or diamonds associated to other Suslin ccc forcings. The countable chain condition of the forcing is used in order to assure the property $\diacoh$\eqref{2.diacoh} in Definition \ref{Def.diacoh}. We are not aware of any applications of these axioms.

\bibliographystyle{amsplain}
\bibliography{naimark}
\end{document}